\documentclass{amsart}
\usepackage{amsmath,amssymb,amsthm,graphicx,epsfig,float,url}
\usepackage[colorlinks=true,citecolor={Plum},linkcolor={Periwinkle}]{hyperref}
\usepackage{pdfsync}
\usepackage[usenames, dvipsnames]{xcolor}
\usepackage{tikz}
\usepackage{subfig}
\usepackage{bbm}
\usepackage{stmaryrd}
\usepackage{amssymb}
\usepackage{mathrsfs}  
\usepackage{caption}
\usepackage{float}
\usepackage{esint}
\usepackage{verbatim}
\usepackage{geometry}
\geometry{hmargin=3cm,vmargin=3cm} 
\usepackage{comment}

\usepackage{mathtools}

\usepackage{dsfont}
\usepackage[makeroom]{cancel}

\usepackage{todonotes}

\usepackage{marginnote}
\newcommand*\mnote[3][0pt]{%
  \if l#2\reversemarginpar\def\pointer{\filledmedtriangleright}%
  \def\stackalignment{r}\fi%
  \if r#2\normalmarginpar\def\pointer{\filledmedtriangleleft}%
  \def\stackalignment{l}\fi%
  \marginpar{%
    \topinset{%
      \scalebox{1.5}{\textcolor{magenta}{$\pointer$}}}{%
      \belowbaseline[-1.5\baselineskip-#1]{%
        \stackengine%
        {-5pt}%
        {\fcolorbox{magenta}{white}{\parbox{1.8cm}%
            {\vspace{3pt}\raggedright#3}}}%
        {~\colorbox{white}{\sffamily Nota}}%
        {O}%
        {l}%
        {F}%
        {F}%
        {S}%
      }%
    }{%
      3ex+#1}{%
      -2ex}%
  }%
}

\definecolor{myblue}{RGB}{89, 128, 212}

\newcommand{\BBB}{\color{black}}

\newcommand{\R}{\textnormal{I\kern-0.21emR}}
\newcommand{\N}{\textnormal{I\kern-0.21emN}}

\newcommand{\C}{\mathscr{C}}

\newcommand{\Lm}{\mathcal{L}}
 
\renewcommand{\geq}{\geqslant}
\renewcommand{\leq}{\leqslant}

\def\B{{\mathbb B}}

\def\e{{\varepsilon}}

\def\th{{\theta}}

\allowdisplaybreaks

\def\YYint#1#2#3{{\setbox0=\hbox{$#1{#2#3}{\iint}$}
    \vcenter{\hbox{$#2#3$}}\kern-.51\wd0}}
 

\usepackage[dvipsnames]{xcolor}

\newtheorem*{theorem*}{Theorem}

\newtheorem{theorem}{Theorem}

\newtheorem{material}{material}[section]
\newtheorem{proposition}[material]{Proposition}
\newtheorem{corollary}[material]{Corollary}
\newtheorem{definition}[material]{Definition}
\newtheorem{lemma}[material]{Lemma}

\newtheorem{remark}[material]{Remark}
\newtheorem{theoremnonumbering}{Theorem}

\def\O{{\Omega}}

\def\n{{\nabla}}

\def\p{{\varphi}}

\newcommand{\abs}[1]{{\left|#1\right|}}
\newcommand{\ov}[1]{{\overline{#1}}}
\newcommand{\norma}[1]{{\left\Vert#1\right\Vert}}

\newcommand{\eps}{\varepsilon}
\newcommand{\Om}{\Omega}

\newcommand{\nabsig}{\nabla_\Sigma}
\newcommand{\nabsigt}{\nabla_{\Sigma_t}}
\newcommand{\lamti}{\tilde{\lambda}}
\newcommand{\lambar}{\overline{\lambda}}
\newcommand{\parnu}{\partial_\nu}
\newcommand{\parnunu}{\partial_{\nu\nu}}
\newcommand{\parnut}{\partial_{\nu_t}}
\newcommand{\parnunut}{\partial_{\nu_t\nu_t}}

\renewcommand{\S}{{\mathbb{S}^{d-1}}}
\newcommand{\Ha}{\mathcal{H}}

\usepackage{xargs}

\numberwithin{equation}{section}

\title[Optimisation problems for bulk-surface systems]{Some optimal control and shape optimisation problems for bulk-surface cooperative systems }
\author{Andrea Gentile}
\address{Mathematical and Physical Sciences for Advanced Materials and Technologies, Scuola Superiore Meridionale, Largo San Marcellino 10, Napoli 80126, Italy.}\email{andrea.gentile2@unina.it}
\author{Idriss Mazari-Fouquer}
\address{CEREMADE, UMR CNRS 7534, Universit\'e Paris-Dauphine, Universit\'e PSL, Place du Mar\'echal De Lattre De Tassigny, 75775 Paris cedex 16, France.}
\email{mazari@ceremade.dauphine.fr}
\author{Rapha\"{e}l Prunier}
\address{CEREMADE, UMR CNRS 7534, Universit\'e Paris-Dauphine, Universit\'e PSL, Place du Mar\'echal De Lattre De Tassigny, 75775 Paris cedex 16, France.}
\email{prunier@ceremade.dauphine.fr}

\begin{document}

\maketitle

\begin{abstract}
The goal of this paper is to address some optimal control and shape optimisation problems arising from bulk-surface cooperative systems. The basic model under consideration is the following: letting $\O$ be a fixed domain, we assume that a population (with density $u$) lives inside $\O$ and can access some resources $f$, while a second population (with density $v$) lives on the boundary $\partial \O$ and can access other resources $g$. These two populations are coupled in a cooperative manner by a constant exchange rate at the boundary, leading to a non-standard PDE system that has already been studied \cite{BGT} for its connection with road-field models. Building on the considerations of \cite{BGT}, we have two main objectives here: first, investigate the question of \emph{optimal resources distribution} inside the domain $\O$ and on the surface $\partial \O$, \emph{i.e.} how to spread resources in order to guarantee an optimal survival of the two species. We establish rigid Talenti inequalities and comparison results when $\O$ is a ball, extending in particular the results of J. J. Langford \cite{Langford_Neumann,Lanford_Robin}. Second, when the resources distribution $f$ and $g$ are constant, we provide a partial analysis of the natural shape optimisation problem: \textit{which shape $\O$ maximises the survival rate of the two species}? Namely, we show that in certain regimes there can be no optimal shape and, by computing second-order shape derivatives, we investigate the local optimality of the ball.
\end{abstract}

\textbf{Keywords:} Spectral optimisation, Talenti inequalities, Shape optimisation, Shape hessian, Cooperative systems.

\textbf{Acknowledgement:} This work was started during a visit of A. Gentile to I. Mazari-Fouquer within the framework of the doctoral program of the Scuola Normale Meriodionale. The second and third author were supported by a PSL Young Researcher Starting Grant 2023 (P.I.: I. Mazari-Fouquer) from Paris Dauphine Universit\'e PSL.

\section{Introduction}

\subsection{Scope of the paper}

The main model under study in the present article is a \emph{cooperative} system of elliptic partial differential equations (PDE) coupled \emph{via} an exchange term. To be more specific, consider a smooth, bounded domain $\O\subset \R^d$ with boundary $\Sigma:=\partial \O$ and two functions $f:\O\to \R$ and $g:\Sigma\to \R$. We are interested in the behaviour of two populations $\theta_\O$ and $\theta_\Sigma$, living respectively inside $\O$ and on $\Sigma$. The first population can access resources distributed according to $f$, while the second one can access resources distributed according to $g$. In each case, this leads to a linear growth term $f\theta_\O$ or $g\theta_\Sigma$. These population are respectively subject to a Malthusian (quadratic) death term $-\theta_{\O}^2$ and $-\theta_{\Sigma}^2$, which accounts for crowding effects. Finally, there is an exchange of the two populations on the boundary: a portion $\kappa\theta_\O$ of the first population passes from the interior to the boundary of the domain, while a portion $\kappa \theta_\Sigma$ of the second passes from the boundary to the interior. For notational simplicity, we take $\kappa=1$, so that the overall system reads
\begin{equation}\label{Eq:basic}
\begin{cases}
-\Delta \theta_\O -\theta_\O \left(f-\theta_\O\right)=0&\text{ in }\O\,, 
\\ \partial_{\nu_\Sigma} \theta_\O+ \theta_\O=\theta_\Sigma&\text{ on }\Sigma\,, 
\\ -\Delta_\Sigma \theta_\Sigma-\theta_\Sigma(g-\theta_\Sigma)=\theta_\O-\theta_\Sigma\,&\text{ on }\Sigma\,, 
\\ \theta_\O\,, \theta_\Sigma>0,&\text{ in }\O\,,
\end{cases}\end{equation}
 where $\nu=\nu_\Sigma$ denotes the outer unit normal to $\Sigma$, and $-\Delta_\Sigma$ stands for the tangential Laplacian over $\Sigma$.

\textbf{Modelling considerations}

Equation \eqref{Eq:basic} appears in a variety of contexts; while we defer a more detailed discussion of the literature to section \ref{Se:Bib} below, let us point out that from the interpretative point of view, one might consider the case where both $\O$ and $\Sigma$ are unbounded, in which case $\O$ can be dubbed a \emph{field}, while $\Sigma$ can be called a \emph{road}. This type of field-road models can be used to model the effects of transport networks on the propagation of biological species or diseases, as in   \cite{berestycki_fisherkpp_2013, berestycki_influence_2013,berestycki_shape_2016}. In the specific case under consideration here, another application comes from cell-polarisation or cellular division (see \cite{Fellner_Latos_Tang,Morita_Sakamoto, Ratz_Roger}), where $\O$ can be thought of as \emph{bulk}, while $\Sigma$ is interpreted as a \emph{surface}. In both cases, the way the geometry of $\O$ impacts the dynamics of the parabolic system associated with \eqref{Eq:basic} is a question of interest. A major motivation of the present article is the work of Bogosel, Giletti \& Tellini in \cite{BGT}, which focuses on the road-field interpretation of the model and seeks to optimise the invasion speed of the species. This problem boils down to a family of spectral optimisation problems that we also tackle here with a different outlook.

\textbf{Mathematical structure of the equation}

Equation \eqref{Eq:basic} is a Fisher-KPP model which belongs to the class of \emph{monostable} elliptic PDEs. In the scalar case (discarding the equation on $\theta_\Sigma$) this equation has been instrumental in population dynamics, and we refer to the monographs \cite{zbMATH02027761,zbMATH07668634}. In the case of systems, the analytical properties of \eqref{Eq:basic} are also well-understood. This is due to the \emph{cooperative} nature of the system, by which we mean the following: rewriting \eqref{Eq:basic} under the form 
\[ \begin{cases} -\Delta \theta_\O=F(x,\theta_\O)-\theta_\O  \Ha^{d-1}_\Sigma+\theta_\Sigma\Ha^{d-1}_\Sigma=F_1(x,\theta_\O)+G_1(\theta_\Sigma)\,, 
\\ -\Delta_\Sigma \theta_\Sigma=F_2(x,\theta_\Sigma)+\theta_\O=F_2(x,\theta_\Sigma)+G_2(\theta_\O)\,,\end{cases}\]  for some functions $F_1$,$F_2$ and $G_1$, $G_2$,  where $\Ha^{d-1}_\Sigma$ is the $(d-1)$ dimensional measure on $\Sigma$, we have $\partial_{\theta}G_k(\theta)>0$, $k=1,2$. Mathematically, this implies a strong maximum principle for the resulting system (see \cite{Girardin_Mazari}). This maximum principle, and the particular form of the non-linearities make it an easy consequence (see \cite{zbMATH02194918,zbMATH07668634}) that the existence, uniqueness and dynamical stability  of a solution to \eqref{Eq:basic} is equivalent to requiring that the  associated first eigenvalue $\Lambda_\O(f,g)$ of the problem
\begin{equation}\label{eq:egvPb}\begin{cases}
-\Delta u-fu=\Lambda_\O(f,g)u&\text{ in }\O\,, \\
\partial_\nu u+ u=v&\text{ on }\Sigma\,, 
\\ -\Delta_\Sigma v+(1-g)v=\Lambda_\O(f,g)v+u&\text{ on }\Sigma,\\
 u>0 \text{ in }\O, v>0 \text{ on } \Sigma,\end{cases}
\end{equation}
be negative: 
\[ \Lambda_\O(f,g)<0.\] Alternatively, this  principal eigenvalue can be defined variationally by
\begin{equation*}\label{Eq:Eigenvalue} \Lambda_\O(f,g):=\min_{\substack{(u,v)\in W^{1,2}(\O)\times W^{1,2}(\Sigma) \\ (u,v)\ne(0,0)}}\frac{\int_\O |\n u|^2+\int_\Sigma |\nabsig v|^2-\int_\O fu^2-\int_\Sigma gv^2+\int_\Sigma (u-v)^2}{\int_\O u^2+\int_\Sigma v^2}\end{equation*}
 where  $W^{1,2}(\O)$ and $ W^{1,2}(\Sigma)$ denote  the standard Sobolev spaces in $\O$ and on $\Sigma$ and $\nabsig$  is the tangential gradient (note that $\Lambda_\Om(f,g)$ is well-defined through \eqref{eq:egvPb} as soon as $\Om$ is Lipschitz).
As such, and as is customary in population dynamics (see \cite{zbMATH07573593}), it is natural to look for the  triplet $(\O,f,g)$ (under some natural constraints) that minimises $\Lambda_\O(f,g)$. There are two natural sub-problems, which are the main focus of this paper:
\begin{itemize}
\item \underline{The optimal control problem.} In this first problem, the domain $\O$ is fixed, and we look for the optimal couple $(f,g)$ to minimise $\Lambda_\O$. We will focus on the case where $\O=\B$ is the ball of radius $1$ and investigate symmetry properties of $(f,g)$. As we shall see, it is an immediate consequence of classical rearrangement inequalities that $f$ and $g$ should be as symmetric as possible (see Proposition \ref{Pr:FK}). Our point of view, however, will be to establish broader classes of Talenti inequalities for such coupled systems, thus providing a finer comparison between the solutions of a linear PDE and of a symmetrised counterpart. These results involve the so-called \textit{cap symmetrisation} (see Definition \ref{De:CapSymmetrisation} below), which consists in a separate symmetrisation on each sphere, with the aim of comparing a solution of the coupled system with its symmetrised version. 
From a methodological point of view, we build on fundamental contributions of J. J. Langford in \cite{Langford_Neumann,Lanford_Robin} (which we recall in Theorem \ref{Th:Langford}) to derive appropriate rigid Talenti inequalities (in Theorem \ref{theo:rigidity}). Observe that we also derive along the way rigidity results for the Talenti inequalities obtained by J. J. Langford (see Theorem \ref{theo:rigidity1}). 
\item \underline{The shape optimisation problem.} In this second problem, which was already the focus of \cite{BGT},  $f$ and $g$ are fixed, and we seek to minimise $\Lambda_\O$ with respect to $\O$ under the volume constraint $|\O|\leq |\B|$. While we are not able to provide a conclusive answer to this question, when $f$ and $g$ are constant we identify regimes where this optimisation problem has no solution (in Theorem \ref{Th:NonExistence}) and we investigate the first and second-order optimality conditions at the ball (in Theorem \ref{thm:local_ball}). As this latter analysis relies on second-order shape derivatives, we thereby extend the results from \cite{BGT}.
\end{itemize}

\subsection{Main results  and methods }

Throughout, $\B\subset\R^d$  denotes the unit ball and $\S:=\partial\B$ the unit sphere.

\subsubsection{The optimal control problem: Talenti inequalities} 
We first recall the definition of cap symmetrisation. For any $r\in(0,1]$, ${\mathbb S^{d-1}_r}$ denotes the sphere of radius $r$ endowed with the natural spherical distance $d_{\mathbb S^{d-1}_r}$. Letting $\{e_i\}_{i\in \{1,\dots,d\}}$ be the canonical basis of $\R^d$, we use the notation $\theta_{\max}^r:=d_{\mathbb S^{d-1}_r}(re_1,-re_1)$.  The notation $\mathcal{H}^{d-1}$ stands for the Hausdorff measure of dimension $d-1$.
\begin{definition}\label{De:CapSymmetrisation}
For $r\in(0,1]$ and $\theta\in\left( 0,\th_{\max}^r\right)$ we define the spherical cap  $K_r(\theta):=\{x\in \mathbb{S}^{d-1}_{r}, d_{\mathbb S^{d-1}_r}(x,re_1)<\theta\}$. 
\begin{itemize}
\item Let $f\in L^1({\mathbb S^{d-1}_r})$ with $f\geq0$. There exists a unique (up to modification on a set with zero $\mathcal{H}^{d-1}$ measure) \BBB function $f^{\sharp}:{\mathbb S^{d-1}_r}\to \R$ which has the following properties:
\begin{enumerate}
\item $f^{\sharp}$ is constant on  $\partial K_r(\theta)$ for any  $\theta\in (0,\th_{\max}^r)$ and letting, with a slight abuse of notation, $f^{\sharp}(\theta)$ be this common value, the map $\theta\mapsto f^{\sharp}(\theta)$ is non-increasing.
\item $f^{\sharp}$ and $f$ have the same distribution function: it holds
\[ \forall t \in \R\,, \mathcal H^{d-1}\left(\{x\in {\mathbb S^{d-1}_r}\,, f(x)\geq t\}\right)= \mathcal H^{d-1}\left(\{x\in {\mathbb S^{d-1}_r}\,, f^{\sharp}(x)\geq t\}\right).\]
\end{enumerate}
This function is called the \emph{cap symmetrisation} of the function $f$.
\item
For any function $f\in L^1(\B)$ with $f\geq0$,  as $f|_{\mathbb S_r}\in L^1(\mathbb S^{d-1}_r)$ for a.e. $r\in(0,1]$, \BBB we can define its cap symmetrisation (with a slight abuse of notation) $f^{\sharp}:\B\to \R$ as follows: for  a.e. \BBB $r\in (0,1]$, $f^{\sharp}|_{\mathbb S_r}:{\mathbb S^{d-1}_r}\to \R$ is the cap symmetrisation of $f|_{\mathbb S_r}:{\mathbb S^{d-1}_r}\to \R$.
\item
Finally, for any $f\in L^1(\B)$ with $f\geq0$, $f_{\sharp}$ denotes its decreasing cap symmetrisation; it is defined similarly, the main difference being that the map $\theta\mapsto f_{\sharp}(\theta)$ is required to be non-decreasing.
\end{itemize}
\end{definition}
As a consequence of general rearrangement inequalities that will be recalled in due time, we obtain the following symmetrisation result for $\Lambda_\O(f,g)$ when $\O=\B$ is the unit ball.
\begin{proposition}\label{Pr:FK}
 Let $f\in L^\infty(\B)$, $g\in L^\infty(\partial \B)$. Then 
\[ \Lambda_\B (f,g)\geq \Lambda_\B\left(f^{\sharp},g^{\sharp}\right).\]
\end{proposition}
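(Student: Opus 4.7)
The plan is to exploit the variational characterisation of $\Lambda_\B(f,g)$ by plugging in the cap-symmetrised version of a minimiser (or, equivalently, of a minimising sequence) and showing that each term in the Rayleigh quotient evolves in the favourable direction.

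First I would take a pair $(u,v)\in W^{1,2}(\B)\times W^{1,2}(\Sigma)$ achieving $\Lambda_\B(f,g)$ and replace it with $(|u|,|v|)$; this step is harmless because the numerator and denominator only involve squares and products, and because the reverse triangle inequality $||u|-|v||\le|u-v|$ ensures that the coupling term $\int_\Sigma (u-v)^2$ can only decrease. I may then assume $u,v\ge 0$ and consider the cap-symmetrised pair $(u^{\sharp},v^{\sharp})$. Because cap symmetrisation on $\B$ is defined sphere by sphere (see Definition \ref{De:CapSymmetrisation}), the trace of $u^{\sharp}$ on $\Sigma=\mathbb{S}^{d-1}$ coincides with the cap symmetrisation on $\mathbb{S}^{d-1}$ of the trace $u|_\Sigma$, which is essential for comparing the boundary terms.

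Next I would invoke three classical families of rearrangement inequalities, each of which will appear in detail later in the paper and is invariant under cap symmetrisation:
\begin{itemize}
\item Equimeasurability: $\int_\B u^2=\int_\B (u^{\sharp})^2$ and $\int_\Sigma v^2=\int_\Sigma (v^{\sharp})^2$, so the denominator is unchanged.
\item Hardy--Littlewood: $\int_\B fu^2\le \int_\B f^{\sharp}(u^{\sharp})^2$ and $\int_\Sigma gv^2\le \int_\Sigma g^{\sharp}(v^{\sharp})^2$, since the super-level sets of $f^{\sharp}$ and $(u^{\sharp})^2$ are simultaneously cap-monotone on each sphere.
\item P\'olya--Szeg\H{o} for cap symmetrisation, both in the ball (acting only on the tangential part of $|\nabla u|^2$ at each radius while leaving the radial part unaffected) and on the sphere $\Sigma$:
\[\int_\B|\nabla u^{\sharp}|^2\le \int_\B|\nabla u|^2\,,\qquad \int_\Sigma|\nabla_\Sigma v^{\sharp}|^2\le \int_\Sigma|\nabla_\Sigma v|^2.\]
\end{itemize}

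The only delicate term is the exchange contribution $\int_\Sigma(u-v)^2$. Expanding the square and using equimeasurability of $u$ (resp. $v$) with $u^{\sharp}$ (resp. $v^{\sharp}$) on $\Sigma$, it suffices to prove
\[\int_\Sigma u^{\sharp} v^{\sharp}\,\ge\,\int_\Sigma u\,v,\]
which is the Hardy--Littlewood inequality on the sphere $\mathbb{S}^{d-1}$ applied to the two non-negative traces, and follows because $u^{\sharp}|_\Sigma$ and $v^{\sharp}|_\Sigma$ share the same family of nested spherical-cap super-level sets. Assembling all these inequalities shows that the Rayleigh quotient of $(u^{\sharp},v^{\sharp})$ with weights $(f^{\sharp},g^{\sharp})$ is at most that of $(u,v)$ with $(f,g)$; taking the infimum yields $\Lambda_\B(f^{\sharp},g^{\sharp})\le\Lambda_\B(f,g)$.

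The main obstacle I anticipate is essentially bookkeeping: making sure the cap symmetrisations on $\B$ and on $\Sigma$ are compatible in the sense that the trace commutes with the symmetrisation, so that the $\int_\Sigma(u-v)^2$ coupling term can be handled by a single Hardy--Littlewood inequality on $\mathbb{S}^{d-1}$. Everything else is a direct application of standard cap-symmetrisation inequalities, which will anyway be revisited in the sharper Talenti-type comparisons stated further down.
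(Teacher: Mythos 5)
Your proposal is essentially the paper's proof: plug the cap-symmetrised eigencouple into the Rayleigh quotient for $\Lambda_\B(f^\sharp,g^\sharp)$ and invoke equimeasurability, Hardy--Littlewood, and Pólya--Szegő for cap symmetrisation. Your handling of the coupling term by expanding $(u-v)^2$ and applying Hardy--Littlewood to $\int_\Sigma uv$ is literally the same as the contraction inequality \eqref{Eq:Contraction} the paper cites, since that inequality is itself derived from Hardy--Littlewood plus equimeasurability. The only cosmetic difference is your initial passage to $(|u|,|v|)$, which is not needed here because the minimiser is the principal eigencouple of \eqref{eq:egvPb}, hence already strictly positive in $\B$ and on $\Sigma$; the paper uses this directly. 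Your explicit remark that the trace of $u^\sharp$ agrees with the cap symmetrisation of the trace of $u$ is a sensible point of care that the paper leaves implicit.
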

As we explained earlier, our main contribution consists in proving a Talenti inequality for the underlying cooperative system.  Let us first define the comparison relation relevant to us. 

\begin{definition}[Comparison relationship on spheres]\label{def:comparison_relation}    
Let $u_1, u_2\in L^\infty(\B)$ with $ u_1, u_2\geq 0$. We write
   \[ u_1\preceq u_2\] whenever one of the three following equivalent conditions holds:
   \begin{itemize}
       \item  For a.e. $r \in (0,1]$ and any $m_0\in (0,\mathcal H^{d-1}({\mathbb S^{d-1}_r}))$,
    \begin{equation*}
\max_{\substack{E\subset {\mathbb S^{d-1}_r},\\ \mathcal H^{d-1}(E)=m_0}}\int_E u_1\leq \max_{\substack{E\subset {\mathbb S^{d-1}_r},\\ \mathcal H^{d-1}(E)=m_0}}\int_E u_2.
    \end{equation*}
    \item For a.e. $r\in(0,1]$ and any $\theta\in (0,\th_{\max}^r)$,
    \[
    \int_{K_r(\theta)}u_1^\sharp\leq \int_{K_r(\theta)}u_2^\sharp
    \]
    \item For a.e. $r\in (0,1]$ and any convex increasing function $\phi:\R\to\R$,
    \[ \int_{{\mathbb S^{d-1}_r}} \phi(u_1)\leq \int_{{\mathbb S^{d-1}_r}}\phi(u_2).\]
    \end{itemize}
\end{definition}
The equivalence between these properties is standard, see \cite{ALT_optimization_prescribed_rearrangements}.
We are now able to state our main result. \BBB
\begin{theorem}
    \label{theo:rigidity}
    Let $f\in L^\infty(\B)$  with $f\geq0$, and set $\beta>0$. \\ \ \\
 \textbf{(i) Rigid Talenti inequality for scalar equations}:  Let $w\in L^\infty(\S)\,, w\geq 0$ and let $(u,v)$ solve
    \begin{equation}\label{Eq:theo1}
        \begin{cases}
            -\Delta u = f & \text{ in  }\B,  
             \\ { \partial_\nu u + \beta u = w }& \text{ on }\S,
        \end{cases}
        \qquad \text{ and } \qquad
        \begin{cases}
            -\Delta v = f^{\sharp} & \text{ in  }\B,  
             \\ \parnu v + \beta v = w^{\sharp} & \text{ on }\S.
        \end{cases}
    \end{equation}
Then \[ u\preceq v.\]

Furthermore,
    if there exists $p\in (1,\infty)$ such that 
   \[
        \norma{u}_{L^p(\B)} = \norma{v}_{L^p(\B)}
    \]
    then, up to the same rotation, $u=u^\sharp=v$, $f=f^{\sharp}$ and $w=w^\sharp$. \\
 \item \textbf{(ii) Rigid Talenti inequality for a fully coupled system}: Let $m_1\in L^\infty(\B)$ with $m_1\geq0$ and $\|m_1\|_{L^\infty(\B)}$ sufficiently small so that the lowest eigenvalue $\lambda_1^\beta(m_1)$ of $-\Delta -m_1$ endowed with the Robin boundary conditions $\partial_\nu\cdot+\beta\cdot=0$ is positive:
\begin{equation}\label{Eq:SmallPotential}
\lambda_1^\beta( m_1 )>0.
\end{equation}
Let $g, m_2\in L^\infty(\S)$ with $g, m_2\geq 0$.
Let  $(u,w_g)$ and $(v,w_{g^{\sharp}})$ solve the following Poisson problems
    \begin{equation}\label{Eq:theo2}
        \begin{cases}
            -\Delta u -m_1 u= f & \text{ in  }\B,  
             \\ \parnu u + \beta u = w_g  & \text{ on }\S,
            \\-\Delta_{ \Sigma} w_g+m_2w_g=g&\text{ on }\S,
        \end{cases}
        \qquad \text{ and } 
        \begin{cases}
            -\Delta v-m_1^{\sharp} v = f^{\sharp} & \text{ in  }\B,  
             \\ \parnu v + \beta v = w_{g^{\sharp}}  & \text{ on }\S,
            \\ -\Delta_{ \Sigma}  w_{g^{\sharp}}+(m_2)_{\sharp} w_{g^{\sharp}}=g^{\sharp}&\text{ on }\S.
        \end{cases}
    \end{equation} Then
    \[ u\preceq v.\]
    Furthermore,
    if there exists $p\in (1,\infty)$ such that 
   \begin{equation}\label{eq:equality_norms_rigidity}
        \norma{u}_{L^p(\B)} = \norma{v}_{L^p(\B)}
    \end{equation}
    then, up to the same rotation, $m_1=m_1^{\sharp}\,, m_2=(m_2)_{\sharp}\,, u=u^{\sharp}=v\,, g=g^{\sharp}\,, f=f^{\sharp}$.
\end{theorem}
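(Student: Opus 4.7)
The plan is to treat parts (i) and (ii) in turn, obtaining the non-strict comparison from Langford's theorem and then extracting rigidity by tracing equality cases back through the proof, finally bootstrapping part (ii) to part (i) through a fixed-point iteration enabled by the smallness assumption \eqref{Eq:SmallPotential}.

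\textbf{Part (i): non-strict comparison and rigidity.} The inequality $u\preceq v$ is exactly the content of Langford's Talenti inequality (Theorem \ref{Th:Langford}). For the rigidity statement, one observes first that $u,v\geq 0$ by the maximum principle (since $f,w\geq 0$ and $\beta>0$), and that $v=v^{\sharp}$ since the symmetrised data are cap-symmetric and the problem is rotationally invariant about the axis $\R e_1$. Applying the third characterisation in Definition \ref{def:comparison_relation} with the strictly convex increasing function $\phi(t)=t^p$, we get $\int_{\mathbb{S}_r^{d-1}}u^p\leq \int_{\mathbb{S}_r^{d-1}}v^p$ for a.e.\ $r\in(0,1]$. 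Integrating in $r$, the assumed equality $\norma{u}_{L^p(\B)}=\norma{v}_{L^p(\B)}$ forces sphere-wise equality, and strict convexity of $\phi$ yields that $u|_{\mathbb{S}_r^{d-1}}$ and $v|_{\mathbb{S}_r^{d-1}}$ are equimeasurable for a.e.\ $r$. To upgrade this to full cap-symmetry of $u$, $f$ and $w$ (with a single common axis) one revisits Langford's proof: it proceeds via the coarea formula combined with the spherical isoperimetric inequality applied on each $\mathbb{S}_r^{d-1}$, and saturation of the Talenti chain forces (a) each super-level set $\{u>t\}\cap \mathbb{S}_r^{d-1}$ to be a geodesic cap (rigidity in the spherical isoperimetric inequality), and (b) the source $f$ and the boundary datum $w$ to agree with their cap rearrangements. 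A continuity-in-$r$ argument then fixes a common axis, yielding $u=u^{\sharp}=v$, $f=f^{\sharp}$, $w=w^{\sharp}$ up to a common rotation.

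\textbf{Part (ii): non-strict comparison.} The system decouples: the sphere equation $-\Delta_\Sigma w_g+m_2 w_g=g$ determines $w_g$ independently of $u$, so applying the sphere analogue of part (i) on $\S$ (Theorem \ref{Th:Langford} in its intrinsic form) with the non-decreasing symmetrisation of the potential yields $w_g\preceq w_{g^{\sharp}}$ on $\S$. For the bulk equation we set up a fixed-point iteration: let $u^{(0)}=v^{(0)}\equiv 0$ and define
\[
\begin{cases}-\Delta u^{(n+1)}=f+m_1 u^{(n)}&\text{in }\B,\\ \partial_\nu u^{(n+1)}+\beta u^{(n+1)}=w_g&\text{on }\S,\end{cases}
\]
and analogously $v^{(n+1)}$ from $f^{\sharp}$, $m_1^{\sharp}$ and $w_{g^{\sharp}}$. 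The assumption $\lambda_1^\beta(m_1)>0$, which by Proposition \ref{Pr:FK} passes to $m_1^{\sharp}$, makes the iteration a strict contraction in $L^2$, so both iterates converge to the unique solutions $u$ and $v$. By induction, at each step the sources satisfy $m_1 u^{(n)}+f\preceq m_1^{\sharp} v^{(n)}+f^{\sharp}$ (combining the inductive hypothesis with the Hardy--Littlewood inequality on each sphere to handle the product $m_1 u^{(n)}$) and the boundary data satisfy $w_g\preceq w_{g^{\sharp}}$, so part (i) yields $u^{(n+1)}\preceq v^{(n+1)}$. Passing to the limit gives $u\preceq v$.

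\textbf{Part (ii): rigidity, and main obstacle.} Under the equality assumption \eqref{eq:equality_norms_rigidity}, the convex-function characterisation again forces sphere-wise equality of $L^p$ norms between $u$ and $v$, hence between $u^{(n)}$ and $v^{(n)}$ in the limit. Applying the rigidity of part (i) to the final Poisson step forces $u=u^{\sharp}=v$ and simultaneous cap-symmetry of the source $f+m_1 u$ and boundary datum $w_g$, whence $w_g=(w_g)^{\sharp}$; a separate rigidity analysis of the sphere equation (rigidity in the spherical Talenti for $w_g$) then gives $g=g^{\sharp}$ and $m_2=(m_2)_{\sharp}$. To extract $m_1=m_1^{\sharp}$ and $f=f^{\sharp}$ separately from the combined equality $f+m_1 u=(f+m_1 u)^{\sharp}$, one uses that $u$ is strictly cap-decreasing on a set of positive measure (by Hopf's lemma applied level by level) together with the rigidity of the Hardy--Littlewood inequality for the product $m_1 u$, which forces $m_1$ and $u$ to be co-monotone with respect to the cap order and hence $m_1=m_1^{\sharp}$, leaving $f=f^{\sharp}$. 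The main obstacle is precisely this last separation step: extracting individual symmetry of $f$ and $m_1$ from the symmetry of their algebraic combination requires a careful application of the rigidity in Hardy--Littlewood together with a quantitative non-degeneracy of $u$, and similarly orchestrating the continuity-in-$r$ argument to pin down a single rotation across all spheres is the most delicate part of the whole proof.
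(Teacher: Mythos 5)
Your proposal has two genuine gaps, both concentrated in part (i), which undermine the whole architecture.

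First, the comparison statement $u\preceq v$ in part (i) is \emph{not} "exactly the content" of Theorem~\ref{Th:Langford}: that theorem is stated for \emph{homogeneous} Robin conditions $\partial_\nu u+\beta u=0$, while \eqref{Eq:theo1} has the non-trivial boundary datum $w$. You cannot just invoke Langford; the paper handles this by an approximation argument, replacing the boundary term $w$ with concentrated bulk sources $w_\e=\e^{-1}w\mathds 1_{\mathbb A_\e}$ supported in a thin annulus (so that the problem has homogeneous Robin conditions and Theorem~\ref{Th:Langford} applies), establishing uniform $W^{1,2}$ bounds, and passing to the limit as $\e\to0$ in both the PDE and the concentration inequality. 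Without this reduction the non-strict comparison of part (i) is unproven, and since part (ii) is bootstrapped from part (i) in both your proposal and the paper, this gap propagates.

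Second, your rigidity strategy for part (i) — "revisit Langford's proof... coarea formula combined with the spherical isoperimetric inequality... saturation of the Talenti chain forces each super-level set to be a geodesic cap" — is precisely the route the paper explicitly flags as \emph{not} available here: "it is not clear whether one can use the usual (when it comes to Talenti inequalities) strategy, which relies on the rigidity of the isoperimetric inequality." Langford's argument is Baernstein-type (star functions/Riesz--Sobolev), not an isoperimetric chain on level sets, so there is no "Talenti chain" of isoperimetric steps to saturate. The paper takes a genuinely different, energetic route: it first shows $u^\sharp=v$ by an extreme-point argument on the convex set $\{\psi:\psi\preceq v\}$ (strict convexity of $\psi\mapsto\|\psi\|_{L^p}^p$ for $p\in(1,\infty)$); then it uses the subharmonicity lemma (Lemma~\ref{Le:SubHarmonicity}, in the spirit of Burton's bathtub principle) to extract the identity $\int_\B fu=\int_\B f^\sharp v$; comparing the Robin energies $\mathcal E(f,u)$ and $\mathcal E(f^\sharp,v)$ then forces equality in Pólya--Szegő sphere by sphere, whose rigidity gives $u=u^\sharp$ on $\S$; finally, adjoint states $q_s$ solving \eqref{Eq:Adjoints} plus Hardy--Littlewood rigidity give $f=f^\sharp$ (and $w=w^\sharp$). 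Your outline contains none of these steps. A minor further point in part (ii): the strict $L^2$-contraction claim does not follow from $\lambda_1^\beta(m_1)>0$ alone (that yields a maximum principle, not a smallness of the solution operator norm); the paper instead obtains convergence by monotonicity of the iterates via the maximum principle, which is both simpler and correct under the stated hypothesis.
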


\begin{remark}[About the $p=1$ case.]
On the other hand there can be no rigidity for $p=1$ (at least for scalar equations). Indeed, taking $w=0$ and any $f$, we keep on calling $u$ the solution of \eqref{Eq:theo1} and we denote by $q_\B$ the Robin torsion function of the ball, \emph{i.e.} $q_\B$ solves 
\[\begin{cases}
    -\Delta q_\B=1 & \text{in }\B,\\
    \partial_\nu q_\B+\beta q_\B=0 & \text{on } \S.\end{cases}
    \] 
    Then $\int_\B q_\B f=\int_\B q_\B f^\sharp$ as $q_\B$ is radial, so that $\int_\B u=\int_\B q_\B f=\int_\B q_\B f^\sharp=\int_\B v$. 
\end{remark}
Regarding the comparison statements, the second item of the theorem will be a consequence of the first one, which is itself based on the following result of J. J. Langford (see \cite[Theorem 1.2]{Lanford_Robin}).

\begin{theoremnonumbering}[Talenti comparison for Robin problems, \cite{Lanford_Robin}]
    \label{Th:Langford}
Let $\beta > 0$ and  $f \in L^\infty(\B)$ with $f\geq 0$. If $u\,, v$ solve    \begin{equation}
        \label{eq:syst_u,v_Rob}
        \begin{cases}
            -\Delta u = f & \text{ in  }\B,  
             \\ \parnu {u}+ \beta u = 0  & \text{ on }\S,
        \end{cases}
        \qquad \text{ and } \qquad
        \begin{cases}
            -\Delta v = f^{\sharp} & \text{ in  }\B,  
             \\ \parnu v + \beta v = 0  & \text{ on }\S.       \end{cases}
        \end{equation}
   Then
   \[  u\preceq v.\]
\end{theoremnonumbering}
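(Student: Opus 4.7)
The plan is to establish the stronger pointwise bound $u^{\sharp}\leq v$ in $\B$ and deduce the sub-majorisation $u\preceq v$: for any convex increasing $\phi$ and a.e.\ $r\in(0,1]$,
\[
\int_{\mathbb{S}^{d-1}_r}\phi(u)=\int_{\mathbb{S}^{d-1}_r}\phi(u^{\sharp})\leq\int_{\mathbb{S}^{d-1}_r}\phi(v),
\]
the equality being cap-equimeasurability on each sphere and the inequality using $\phi$ non-decreasing together with the pointwise bound. The tool is polarisation by hyperplanes $H\subset\R^d$ passing through the origin and containing the axis $\R e_1$. For $w\geq 0$, a fixed closed half-space $H^{+}$ and the orthogonal reflection $\sigma_H$ through $H$, the polarisation is $w^{H}(x):=\max\{w(x),w(\sigma_H x)\}$ on $H^{+}\cap\B$ and $\min\{w(x),w(\sigma_H x)\}$ on $H^{-}\cap\B$. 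Polarisation is equimeasurable on each sphere, preserves $\int_\B|\nabla w|^2$ and $\int_\S w^2$, is order-preserving (if $w_1\leq w_2$ then $w_1^{H}\leq w_2^{H}$ since $\max$ and $\min$ are monotone), and by a standard density argument $w^{\sharp}$ is the $L^p(\B)$-limit of an iterated sequence of such polarisations with $H$ varying among hyperplanes through $\R e_1$.

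The core step is that for any admissible $H$ one has $u^{H}\leq v_H$, where $v_H$ denotes the solution of the same Robin problem with source $f^{H}$. This is obtained by proving that $u^{H}$ is a weak sub-solution of $-\Delta\cdot=f^{H}$ with Robin inequality $\partial_\nu\cdot+\beta\cdot\leq 0$, namely
\[
\int_\B\nabla u^{H}\cdot\nabla\varphi+\beta\int_\S u^{H}\varphi\;\leq\;\int_\B f^{H}\varphi
\qquad\text{for every non-negative }\varphi\in W^{1,2}(\B).
\]
The argument exploits that the reflected function $u\circ\sigma_H$ itself solves the Robin problem with source $f\circ\sigma_H$ (since $\B$, $\S$ and the normal direction are $\sigma_H$-invariant, $H$ passing through the origin) and uses the pointwise identities on $H^{+}\cap\B$:
\[
u^{H}+u^{H}\!\circ\sigma_H=u+u\!\circ\sigma_H, \qquad u^{H}-u^{H}\!\circ\sigma_H=|u-u\!\circ\sigma_H|,
\]
together with the analogues for $f^{H}$. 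Writing $\varphi=\varphi_{+}+\varphi_{-}$ with $\varphi_{\pm}$ supported in $H^{\pm}\cap\B$ and testing the weak formulations for $u$ and $u\circ\sigma_H$ against suitable symmetric and antisymmetric combinations of $\varphi_\pm$, the cross term involving $(u-u\circ\sigma_H)$ carries the correct sign on $H^{+}$ because $\varphi\geq 0$ and $u^{H}\geq u$ there. The weak maximum principle for the coercive Robin bilinear form (coercivity on $W^{1,2}(\B)$ uses $\beta>0$) then yields $u^{H}\leq v_H$.

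For the conclusion one iterates. Pick a sequence $(H_k)_{k\geq 1}$ of hyperplanes through $\R e_1$, set $f_0:=f$, $u_0:=u$ and inductively $f_{k+1}:=(f_k)^{H_{k+1}}$, $u_{k+1}:=(u_k)^{H_{k+1}}$; by standard density for iterated polarisations one can arrange the simultaneous convergences $f_k\to f^{\sharp}$ and $u_k\to u^{\sharp}$ in $L^p(\B)$. Let $v_k$ solve the Robin problem with source $f_k$, so that $v_0=u$. One proves $u_k\leq v_k$ by induction: polarising by $H_{k+1}$ gives $u_{k+1}=(u_k)^{H_{k+1}}\leq (v_k)^{H_{k+1}}$ by order-preservation, and the core step applied to $v_k$ gives $(v_k)^{H_{k+1}}\leq v_{k+1}$, whence $u_{k+1}\leq v_{k+1}$. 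Standard $L^p$-to-$C^0$ regularity for the Robin Laplacian yields $v_k\to v$ uniformly on $\overline{\B}$, and extracting an a.e.-convergent subsequence of $(u_k)$ delivers $u^{\sharp}\leq v$ pointwise a.e.. The main obstacle is the sub-solution property of the second paragraph: although the polarisation mechanism is classical in the Dirichlet setting, keeping track of the boundary integral when $\beta>0$ requires a careful use of $\sigma_H$-symmetry on $\S$ and a delicate choice of symmetric/antisymmetric test functions.
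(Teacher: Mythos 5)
There is a genuine gap: the pointwise inequality $u^{\sharp}\leq v$ that you aim to prove is \emph{false} in general, and thus strictly stronger than the concentration comparison $u\preceq v$ that the theorem actually asserts. You can see this from the paper's own Remark following Theorem~\ref{theo:rigidity}: for the homogeneous Robin problem (which is exactly \eqref{eq:syst_u,v_Rob}) one always has $\int_\B u=\int_\B q_\B f=\int_\B q_\B f^{\sharp}=\int_\B v$, where $q_\B$ is the radial Robin torsion function. If $u^{\sharp}\leq v$ held pointwise then, since $\int_\B u^{\sharp}=\int_\B u=\int_\B v$, one would get $u^{\sharp}=v$ a.e., hence $\Vert u\Vert_{L^p}=\Vert u^{\sharp}\Vert_{L^p}=\Vert v\Vert_{L^p}$ for every $p$, and Theorem~\ref{theo:rigidity1} would force $f=f^{\sharp}$ up to a rotation. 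Since $f$ is arbitrary, this is a contradiction. So the entire strategy (establish a pointwise bound and read off the convex-order comparison) cannot work as stated.

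The flaw propagates to your ``core step'' $u^{H}\leq v_{H}$. On $H^{-}\cap\B$ you have $u^{H}=\min(u,u\circ\sigma_H)$, and $-\Delta u^{H}$ there equals $f\mathbf 1_{\{u<u\circ\sigma_H\}}+(f\circ\sigma_H)\mathbf 1_{\{u>u\circ\sigma_H\}}$ \emph{plus a non-negative singular measure} supported on $\{u=u\circ\sigma_H\}\cap H^{-}$ (the minimum of two smooth functions has a concave kink, hence the distributional Laplacian of the minimum gains a non-negative singular part with a minus sign). Neither term is controlled by $f^{H}|_{H^{-}}=\min(f,f\circ\sigma_H)$, since the partition $\{u\lessgtr u\circ\sigma_H\}$ is governed by $u$, not by $f$: at a point where $u<u\circ\sigma_H$ but $f>f\circ\sigma_H$ the bulk term already exceeds $f^{H}$. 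So $u^{H}$ is not a weak sub-solution of $-\Delta\cdot=f^{H}$ with the Robin constraint, and the maximum principle argument in your second paragraph does not go through. (On $H^{+}$ the max has the right sign, which may be why the argument looks plausible at first glance.)

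For context, the paper does not re-prove Theorem~\ref{Th:Langford} (it is quoted from Langford), but it sketches the correct mechanism in Lemma~\ref{Le:SubHarmonicity}: $u^{\sharp}$ is a subsolution of the symmetrised equation only when tested against \emph{cap-symmetric} non-negative test functions $\phi=\phi^{\sharp}$, not against all non-negative test functions. This weaker, constrained subsolution property (obtained via a Riesz--Sobolev inequality \`a la Baernstein) is exactly what yields the concentration comparison $u\preceq v$, and no more. If you want a polarisation route you should aim at showing a concentration inequality $u^{H}\preceq v_{H}$ at each step (e.g.\ by comparing energies or by working with the ``two-point'' rearrangement at the level of the layer integrals), not a pointwise domination.
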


 Roughly speaking, our approach for proving the concentration inequalities from Theorem \ref{theo:rigidity} consists in approximating the problem \eqref{Eq:theo1} with solutions of \eqref{eq:syst_u,v_Rob}-type problems. 

On the other hand, as we mentioned in the introduction, Langford left the question of rigidity in Theorem \ref{Th:Langford} open. In particular, it is not clear whether one can use the usual (when it comes to Talenti inequalities) strategy, which relies on the rigidity of the isoperimetric inequality. We rather prove rigidity using a different approach, which is more energetic in nature. The following rigidity result is the prototypical one (in particular, the rigidity result of Theorem \ref{theo:rigidity} is based on it).
\begin{theorem}[Rigidity of the Talenti inequality for Robin problems] \label{theo:rigidity1}
Retaining the notations of Theorem \ref{Th:Langford}, if there exists $p\in(1,\infty)$ such that 
\[ \Vert u\Vert_{L^p(\B)}=\Vert v\Vert_{L^p(\B)}\] then, up to the same rotation, $u=u^{\sharp}=v$ and  $f=f^{\sharp}$.
\end{theorem}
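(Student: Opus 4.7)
The plan is to leverage the concentration relation $u \preceq v$ from Theorem~\ref{Th:Langford}, combine it with the energy identity obtained by testing each PDE against its own solution, and then extract rigidity from the resulting equalities in the Pólya--Szegő and Hardy--Littlewood inequalities. The argument is purely energetic and bypasses isoperimetric rigidity entirely.

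First, I would show that $v = v^{\sharp}$: the Robin solution $v$ minimises the quadratic energy $w \mapsto \frac12 \int_\B |\nabla w|^2 + \frac{\beta}{2}\int_\S w^2 - \int_\B f^{\sharp} w$, and replacing $v$ by $v^{\sharp}$ decreases the Dirichlet term (Pólya--Szegő for cap symmetrisation), preserves the boundary $L^2$-norm (cap symmetrisation preserves the distribution on each sphere), and increases the linear term (Hardy--Littlewood); uniqueness of the minimiser then forces $v = v^{\sharp}$. Next I would argue that $u$ and $v$ are equidistributed on almost every sphere. Using the third characterisation of $\preceq$ in Definition~\ref{def:comparison_relation} with the strictly convex function $t \mapsto t^p$ (valid since $u, v \geq 0$ by the maximum principle and $p > 1$),
\[
\int_{\mathbb{S}^{d-1}_r} u^p\, d\sigma \leq \int_{\mathbb{S}^{d-1}_r} v^p\, d\sigma \qquad \text{for a.e.\ } r \in (0,1].
\]
Integrating in $r$ against $r^{d-1}\,dr$ and using the hypothesis $\Vert u\Vert_{L^p(\B)} = \Vert v\Vert_{L^p(\B)}$ forces equality for a.e.\ $r$, and classical Hardy--Littlewood--Pólya majorisation rigidity then gives that $u|_{\mathbb{S}^{d-1}_r}$ and $v|_{\mathbb{S}^{d-1}_r}$ are equidistributed. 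Combined with $v = v^{\sharp}$, this yields $u^{\sharp} = v$ a.e.\ in $\B$.

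Second, I would test $-\Delta u = f$ against $u$ and $-\Delta v = f^{\sharp}$ against $v$, integrating by parts against the Robin boundary condition:
\[
\int_\B |\nabla u|^2 + \beta \int_\S u^2 = \int_\B f u, \qquad \int_\B |\nabla v|^2 + \beta \int_\S v^2 = \int_\B f^{\sharp} v.
\]
The boundary terms are equal by the preceding step, so subtracting yields
\[
\int_\B |\nabla u|^2 - \int_\B |\nabla v|^2 = \int_\B f u - \int_\B f^{\sharp} v.
\]
The left-hand side is nonnegative by Pólya--Szegő for cap symmetrisation applied to $u$ (combined with $u^{\sharp} = v$), while the right-hand side is nonpositive by Hardy--Littlewood applied sphere by sphere (again using $u^{\sharp} = v$) and integrated in $r$. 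Hence both sides vanish, and I simultaneously have equality in Pólya--Szegő and in Hardy--Littlewood.

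The last and hardest step will be to convert these two equalities into the rigidity statement. Equality in the Hardy--Littlewood inequality on a.e.\ sphere forces $f|_{\mathbb{S}^{d-1}_r}$ and $u|_{\mathbb{S}^{d-1}_r}$ to be cap symmetric about a common axis $a_r \in \S$; the substantive content is to show that $a_r$ does \emph{not} depend on $r$. I expect this to follow from the rigidity case of the cap Pólya--Szegő inequality on the ball itself---which forbids any ``twisting'' of axes between neighbouring spheres, and is the genuine obstruction here---combined with the $C^{1,\alpha}$ regularity of $u$ afforded by standard elliptic theory, since $f \in L^\infty$. Once $a_r \equiv a$ has been established, I select a rotation $R$ with $R a = e_1$: then $u \circ R^{-1}$ is cap symmetric about $e_1$, hence equals $(u \circ R^{-1})^{\sharp} = u^{\sharp} = v$, and applying $-\Delta$ to both sides yields $f \circ R^{-1} = f^{\sharp}$, completing the rigidity.
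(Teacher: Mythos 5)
Your first three steps are essentially correct, and two of them take genuinely different routes from the paper. For establishing $u^\sharp = v$, you invoke the strict convexity of $t\mapsto t^p$ and the rigidity of Hardy--Littlewood--P\'olya majorisation on almost every sphere; the paper instead realises $u$ as an extreme point of the convex set $\{\psi\geq0 : \psi\preceq v\}$ (the Alvino--Lions--Trombetti characterisation), since a strictly convex functional is maximised there only at extreme points. Both arguments are valid and of comparable length. For the energy step, you subtract the two weak formulations after observing $\int_\S u^2 = \int_\S v^2$ (a point that needs a word of justification: it follows from $u^\sharp = v$ a.e.\ in $\B$ together with continuity of $u^\sharp$ and $v$ up to the boundary), whereas the paper obtains $\int_\B fu = \int_\B f^\sharp v$ by applying the equality case of Lemma~\ref{Le:SubHarmonicity} with $\phi = v$, $\psi = u$. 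Again both routes lead to simultaneous equality in the P\'olya--Szeg\"o and Hardy--Littlewood inequalities, and yours is arguably the more elementary of the two.

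The last step, however, contains a genuine gap, and it is precisely the one you flag as the hardest. Equality in P\'olya--Szeg\"o and in Hardy--Littlewood on almost every sphere only gives, for a.e.\ $r$, that $u|_{\mathbb{S}^{d-1}_r}$ and $f|_{\mathbb{S}^{d-1}_r}$ are cap symmetric about some axis $a_r\in\mathbb{S}^{d-1}$ \emph{depending on} $r$. You propose to rule out twisting of the axes $a_r$ via a rigidity case of the cap P\'olya--Szeg\"o inequality on the full ball. But the only ingredient available for the radial part of that inequality is the Berestycki--Lachand-Robert lemma $\int_\B|\nabla_r u^\sharp|^2\leq\int_\B|\nabla_r u|^2$, whose equality case is a Brothers--Ziemer/Cianchi--Fusco type statement; it is neither stated nor cited in the paper, is delicate because of the set where $\nabla_r u$ vanishes, and would require a separate proof. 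In other words, the core difficulty is exactly here, and ``I expect this to follow'' does not close it.

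The paper sidesteps the axis-alignment problem completely, and it is worth understanding how. Having first proved $u = u^\sharp = v$ \emph{on} $\S$ (by taking the limit $r\to 1$ in the sphere-by-sphere P\'olya--Szeg\"o rigidity and then rotating), it introduces the harmonic function $q_s$ solving $-\Delta q_s = 0$ in $\B$ with $\partial_\nu q_s + \beta q_s = \mathds{1}_{E_s}$, $E_s$ a spherical cap about $e_1$. Two observations do the work: Green's identity gives $\int_\B f q_s = \int_\S \mathds{1}_{E_s}u = \int_\S \mathds{1}_{E_s}v = \int_\B f^\sharp q_s$; and an energy argument (the same one used to get $v = v^\sharp$) shows $q_s = q_s^\sharp$, so $q_s$ is cap symmetric about $e_1$ on \emph{every} sphere. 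The Hardy--Littlewood rigidity \eqref{Eq:HLR} applied to the pair $(f, q_s)$ then forces the level sets of $f|_{\mathbb{S}^{d-1}_r}$ to be nested with those of $q_s|_{\mathbb{S}^{d-1}_r}$, which are caps about $e_1$ for every $r$. The fixed function $q_s$ thus serves as an external reference that anchors the axis, and there is no need for a radial rigidity result. You should adopt this device to complete your argument; without it, your proposal is a correct strategy for the first three steps but does not establish the theorem.
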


 The idea is to show that if equality occurs, then $f$ is a maximiser of an auxiliary optimisation problem where one can use the rigidity of the Poly\'a-Szeg\"{o} inequality.  

We finish this paragraph by underlining an interesting corollary of Theorem \ref{theo:rigidity}, which is the following comparison principle for a solution of the Dirichlet problem.
\begin{corollary}\label{Co:DirichletData}
Let $w\in W^{\frac12,2}(\S)\cap L^\infty(\S)$ with $w \geq 0$, and $f\in L^\infty(\B)$ with $f\geq 0$. Assume that $u$ and $v$ solve the following Poisson problems
    \[
        \begin{cases}
            -\Delta u = f & \text{ in  }\B,  
             \\ { u = w } & \text{ on }\S,
        \end{cases}
         \text{ and } 
        \begin{cases}
            -\Delta v = f^\sharp & \text{ in  }\B,  
             \\ { v = w^{\sharp} } & \text{ on }\S.
        \end{cases}
    \]
Then 
    \[
    u\preceq v.
    \]
\end{corollary}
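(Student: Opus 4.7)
The plan is to realise the Dirichlet problem as the stiff limit of the Robin problem, thereby bringing Theorem~\ref{theo:rigidity}(i) into play. For each $\beta > 0$, let $u_\beta$ and $v_\beta$ be the solutions of
\[
\begin{cases}
-\Delta u_\beta = f & \text{in } \B,\\
\parnu u_\beta + \beta u_\beta = \beta w & \text{on } \S,
\end{cases}
\qquad\text{and}\qquad
\begin{cases}
-\Delta v_\beta = f^{\sharp} & \text{in } \B,\\
\parnu v_\beta + \beta v_\beta = \beta w^{\sharp} & \text{on } \S.
\end{cases}
\]
Since $\beta w \geq 0$ and $(\beta w)^{\sharp} = \beta w^{\sharp}$ (the cap symmetrisation commutes with positive scaling), Theorem~\ref{theo:rigidity}(i) yields $u_\beta \preceq v_\beta$ for every $\beta > 0$. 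It then suffices to pass to the limit $\beta \to \infty$.

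\textbf{Convergence of the approximations.} Testing the weak formulation for $u_\beta$ with $u_\beta$ itself, and using Young's inequality together with a trace--Poincar\'e inequality, provides a uniform-in-$\beta$ $H^1(\B)$ bound. Along a subsequence $u_\beta \rightharpoonup u_\infty$ in $H^1(\B)$ and strongly in $L^2(\B)$; passing to the weak limit identifies $-\Delta u_\infty = f$ in $\B$, while for any test $\phi\in H^1(\B)$ the penalisation term $\beta\int_{\S}(u_\beta-w)\phi \to 0$ must remain bounded, so that the trace $u_\infty|_\S = w$. By uniqueness of the Dirichlet problem, $u_\infty = u$, so the whole sequence converges. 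The explicit supersolution $\|w\|_{L^\infty(\S)} + \frac{\|f\|_{L^\infty(\B)}}{\beta d} + \frac{\|f\|_{L^\infty(\B)}}{2d}(1-|x|^2)$, together with the Robin maximum principle, moreover provides a uniform $L^\infty(\B)$ bound on $u_\beta$; hence the convergence upgrades to $L^p(\B)$ for all $p<\infty$, and the analogous statements hold for $v_\beta \to v$.

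\textbf{Passage to the limit in the comparison.} By Fubini, along a further subsequence $u_\beta \to u$ and $v_\beta \to v$ in $L^1(\mathbb{S}^{d-1}_r)$ for a.e. $r \in (0,1]$, and the uniform $L^\infty$ bound persists. For any convex increasing $\phi : \R \to \R$, continuity of $\phi$ on bounded sets combined with dominated convergence yield
\[
\int_{\mathbb{S}^{d-1}_r}\phi(u) = \lim_{\beta\to\infty}\int_{\mathbb{S}^{d-1}_r}\phi(u_\beta) \leq \lim_{\beta\to\infty}\int_{\mathbb{S}^{d-1}_r}\phi(v_\beta) = \int_{\mathbb{S}^{d-1}_r}\phi(v)
\]
for a.e. $r$, where the middle inequality exploits $u_\beta \preceq v_\beta$ via the third equivalent condition in Definition~\ref{def:comparison_relation}. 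This is precisely $u \preceq v$. The main delicate step is the Robin-to-Dirichlet limit under the borderline regularity $w \in W^{1/2,2}(\S) \cap L^\infty(\S)$: if handling the boundary trace directly proves cumbersome, a clean workaround is to first approximate $w$ by smooth non-negative data $w_n \to w$ in $W^{1/2,2}(\S)\cap L^\infty(\S)$, apply the preceding scheme at fixed $w_n$, and then pass to the limit $n \to \infty$ using continuous dependence of the Dirichlet solution on its datum and stability of the cap symmetrisation under $L^p$ convergence.
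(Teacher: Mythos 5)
The paper states Corollary~\ref{Co:DirichletData} as a consequence of Theorem~\ref{theo:rigidity} but does not actually write out a proof, so there is nothing to compare against word-for-word. Your stiff-Robin approximation $\partial_\nu u_\beta+\beta u_\beta=\beta w$, $\beta\to\infty$, is the natural way to obtain it, and it is structurally parallel to how the paper proves the comparison in Theorem~\ref{theo:rigidity}(i) itself (there, a Robin problem with inhomogeneous boundary data is approximated by homogeneous Robin problems with a concentrated bulk source $\eps^{-1}w\mathds 1_{\mathbb A_\eps}$). Your identification $(\beta w)^\sharp=\beta w^\sharp$, the explicit barrier $\|w\|_{L^\infty}+\tfrac{\|f\|_{L^\infty}}{\beta d}+\tfrac{\|f\|_{L^\infty}}{2d}(1-|x|^2)$ giving a uniform $L^\infty(\B)$ bound, and the coarea/dominated-convergence passage to the limit in the third characterisation of $\preceq$ are all correct.

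One step is stated too loosely to be right as written: testing the weak formulation with $u_\beta$ itself gives
\[
\int_\B|\nabla u_\beta|^2+\beta\int_\S u_\beta^2=\int_\B fu_\beta+\beta\int_\S w\,u_\beta,
\]
and the term $\beta\int_\S w\,u_\beta$ is not uniformly bounded in $\beta$, so this does not yield the claimed $H^1$ bound. The standard fix, and the point where the hypothesis $w\in W^{\frac12,2}(\S)$ is actually used, is to pick an $H^1(\B)$ extension $W$ of $w$ and test with $u_\beta-W$; this gives
\[
\int_\B|\nabla u_\beta|^2+\beta\int_\S(u_\beta-w)^2=\int_\B\nabla u_\beta\cdot\nabla W+\int_\B f(u_\beta-W),
\]
from which Young's inequality and the norm equivalence \eqref{Eq:Norms} (applied to $u_\beta-W$) yield both $\limsup_\beta\|u_\beta\|_{H^1(\B)}<\infty$ and $\|u_\beta-w\|_{L^2(\S)}=O(\beta^{-1/2})$; the latter directly recovers the Dirichlet trace of the weak limit without the heuristic ``must remain bounded'' argument you sketch. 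With this correction the rest of your argument goes through unchanged, and the final reduction to smooth boundary data you propose is optional once the barrier and the correct test function are in place.
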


\subsubsection{The shape optimisation problem}
In the second part of the paper, we study the spectral shape optimisation problem: $f$ and $g$ are fixed and we want to minimise $\Lambda_\O(f,g)$ with respect to $\O$. We focus on the case where $f$ and $g$ are constant potentials, say  $f\equiv -c_i\,, g \equiv c_b$  for some constants $c_i,c_b\in\R$, and investigate the minimisation problem
\begin{equation}\label{Eq:Pv2}
\inf\left\{\lambda_{c_i,c_b}(\Om),\O\subset \R^d \text{  bounded Lipschitz  open set with } |\O|=|\B|\right\}
\end{equation}
where $\lambda_{c_i,c_b}(\Om):=\Lambda_\O(-c_i,c_b)$ was defined in \eqref{eq:egvPb}. 
As such, the constants $--c_i$ and $c_b$   represent the intrinsic growth rate of the species $u$ and $v$  in $\O$ and on its boundary respectively. As a consequence, the bigger $c_i\in\R$ (respectively $c_b\in\R$) is, the more unfavorable (respectively favorable) the interior (respectively boundary) of the domain is.  

Observe that this problem is \emph{a priori} delicate: there is no natural class of domains $\O$ where one might expect existence of an optimal shape. But more importantly, the eigenvalue $\lambda(\Om)$ blends together two different types of PDEs  featuring distinct spectral behaviours:  a bulk PDE set in $\Om$  with Robin boundary conditions,  for which the ball is often a minimiser of the related spectral quantity, and a surface PDE for which the ball  tends to be a maximiser instead (see Section \ref{Se:Bib} and the references therein). Thus, rather than aiming for a complete description of the minimisers we will provide partial answers to the problem, by exhibiting two different regimes for the parameters $c_i,c_b$, thus leading to distinct types of assertions: we establish some non-existence results, while investigating on the other hand the local minimality of the ball through the study of first and second-order optimality conditions.  The two next theorems gather these results. 

The first result guarantees that in general, one cannot expect existence of a minimiser for the optimisation of $\lambda_{c_i,c_b}$ under a volume constraint. Precisely, it claims that there is no global existence for this minimisation problem
whenever the resources input on the boundary dominates the one in the interior, among either smooth or convex bounded open sets of fixed volume.
\begin{theorem}\label{Th:NonExistence}
    Let $c_i, c_b\in \R$ with $-c_b< c_i$. We denote by $\mathcal{O}_{\text{s}}$ (respectively $\mathcal{O}_{\text{c}}$) the class of bounded smooth  (respectively convex) open set with volume $|\B|$. Then 
\[
\inf_{\Om\in\mathcal{O}_{\text{s}}}\lambda_{c_i,c_b}(\Om)= \inf_{\Om\in\mathcal{O}_{\text{c}}}\lambda_{c_i,c_b}(\Om)=-c_b
\]
and these two minimisation problems do not admit a solution. 
\end{theorem}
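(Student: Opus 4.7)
The argument splits into three essentially independent steps: a universal lower bound $\lambda_{c_i,c_b}(\Om)\geq -c_b$, a matching upper bound realised along a degenerating sequence of smooth convex domains, and a short rigidity argument ruling out attainment of the infimum.

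For the lower bound, I would use the variational characterisation of $\lambda_{c_i,c_b}(\Om)=\Lambda_\Om(-c_i,c_b)$ given in \eqref{eq:egvPb}. For any admissible $(u,v)\in W^{1,2}(\Om)\times W^{1,2}(\Sigma)\setminus\{(0,0)\}$, the numerator of the Rayleigh quotient shifted by $c_b$ times the denominator rewrites as
\[
\int_\Om|\nabla u|^2+\int_\Sigma |\nabsig v|^2+(c_i+c_b)\int_\Om u^2+\int_\Sigma(u-v)^2,
\]
which is nonnegative precisely because $c_i+c_b>0$ under the hypothesis $-c_b<c_i$. Dividing by the positive denominator and infimising in $(u,v)$ yields $\lambda_{c_i,c_b}(\Om)\geq -c_b$ for every bounded Lipschitz $\Om$, in particular for every element of $\mathcal{O}_{\text{s}}\cup\mathcal{O}_{\text{c}}$.

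For the upper bound, I would exhibit a sequence $(\Om_n)\subset\mathcal{O}_{\text{s}}\cap\mathcal{O}_{\text{c}}$ of smooth convex domains with $|\Om_n|=|\B|$ and $\mathcal{H}^{d-1}(\partial\Om_n)\to\infty$; a transparent choice (valid in any dimension $d\geq 2$) is the family of ellipsoids whose largest semi-axis tends to infinity while the remaining $d-1$ semi-axes are adjusted to preserve volume, for which a direct computation shows divergence of the surface area. Testing the Rayleigh quotient against the constant pair $(u,v)\equiv (1,1)$ gives
\[
\lambda_{c_i,c_b}(\Om_n)\leq \frac{c_i|\B|-c_b\,\mathcal{H}^{d-1}(\partial\Om_n)}{|\B|+\mathcal{H}^{d-1}(\partial\Om_n)}\xrightarrow[n\to\infty]{}-c_b,
\]
so, combined with the lower bound, both infima equal $-c_b$.

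To rule out attainment, I would suppose for contradiction that some $\Om^*$ in either class satisfies $\lambda_{c_i,c_b}(\Om^*)=-c_b$, and take a minimiser $(u^*,v^*)\neq (0,0)$ of the Rayleigh quotient. Equality in the shifted identity above applied to $(u^*,v^*)$ forces each nonnegative term to vanish; since $c_i+c_b>0$ this imposes $u^*\equiv 0$ in $\Om^*$, and then the coupling term $\int_{\partial\Om^*}(u^*-v^*)^2=0$ forces $v^*\equiv 0$ on $\partial\Om^*$, contradicting $(u^*,v^*)\neq (0,0)$. The only mildly delicate point in the whole proof is the explicit construction of smooth convex sets of fixed volume with exploding surface area, handled by the ellipsoid family; everything else reduces to the equivalence between $-c_b<c_i$ and the coercivity of the shifted Rayleigh numerator.
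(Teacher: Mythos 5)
Your proof is correct and follows essentially the same overall strategy as the paper: a variational lower bound $\lambda_{c_i,c_b}\geq -c_b$, a matching upper bound from the constant test pair along a sequence of fixed-volume domains with exploding perimeter, and a rigidity argument to exclude attainment. Two minor stylistic improvements over the paper's treatment are worth noting. First, you realise both infima simultaneously with a single degenerating family of ellipsoids, which are at once smooth and convex; the paper instead uses thinning rectangles for $\mathcal{O}_{\text{c}}$ and then separately regularises them for $\mathcal{O}_{\text{s}}$. Second, you organise the lower bound as a sum-of-nonnegative-terms identity (numerator plus $c_b$ times denominator equals $\int_\Om|\nabla u|^2+\int_\Sigma|\nabsig v|^2+(c_i+c_b)\int_\Om u^2+\int_\Sigma(u-v)^2$), which makes the non-attainment argument transparent: equality forces $u^*\equiv 0$ from the $(c_i+c_b)$ term, hence $v^*\equiv 0$ from the coupling term via the trace, contradiction. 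The paper instead proves the strict inequality $\lambda_{c_i,c_b}(\Om)>-c_b$ in a separate Proposition \ref{prop:lambda_cicb_est} by a two-step chain of inequalities and then invokes it. Both reasonings are equivalent; yours is slightly more self-contained, but neither buys anything substantively new. One very small point to make fully rigorous: when you apply the upper-bound Rayleigh quotient bound to ellipsoids, one should either cite the elementary projection estimate (the surface area of a convex body dominates twice any $(d-1)$-dimensional shadow, which diverges for the elongating ellipsoids) or carry out the explicit ellipsoid surface-area asymptotics, but this is indeed a routine computation as you indicate.
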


In the second result we investigate the local optimality of the ball.
As we are optimising under volume constraint, the optimality conditions will involve some Lagrange multiplier $\mu\in \R$ associated to this constraint. Thus the relevant functional is rather the Lagrangian $\Lm_{c_i,c_b,\mu}:=\lambda_{c_i,c_b}-\mu\mathrm{Vol}$, where $\mathrm{Vol}$ denotes the volume functional. Before stating the result, let us introduce some notation : 
if $\Om\mapsto J(\Om)$ is a shape functional we will denote (provided it exists) by $J'(\Om)\cdot(\theta)$ and $J''(\Om)\cdot(\theta,\theta)$ the first and second Fréchet-derivatives at $0$ in the direction $\th$ of the functional 
\[
\theta\in\C^{2,\eta}(\R^d,\R^d)\mapsto J\left((\text{Id}+\th)(\Om)\right),
\]
where $\eta\in(0,1)$. 
\begin{theorem}[The ball does/does not verify second-order optimality conditions] \label{thm:local_ball}
Let $c_i,c_b\in\R$ and $\eta\in(0,1)$. Then the ball verifies the first-order optimality condition: there exists $\mu\in\R$ such that for all $ \th\in\C^{2,\eta}(\R^d,\R^d),$
\[\Lm_{c_i,c_b,\mu}'(\B)\cdot(\th)=0.
\]
\begin{enumerate}
    \item\label{it:notlocalmin} Assume that $c_i>-c_b$.     Then the ball does not verify the second-order optimality condition (under volume constraint): there exists $
\th\in \C^{2,\eta}(\R^d,\R^d)$ with $\int_\S \th\cdot\nu_\B=0$ such that
\[\Lm_{c_i,c_b,\mu}''(\B)\cdot(\th,\th)<0.
    \]
    \item \label{it:localmin}Assume $2\leq d\leq 5$, and $c_i\ll -c_b$, in the sense that $c_i+c_b\to-\infty$ . Then 
        the ball verifies the second-order optimality condition (under volume constraint): there exists $c>0$ (which can be chosen uniform in $(c_i\,, c_b)$ for $c_i+c_b$ negative enough) such that for all $\th\in\C^{2,\eta}(\R^d,\R^d)$ with $\int_\S \th\cdot\nu_\B=0$,
    \[
    \Lm_{c_i,c_b,\mu}''(\B)\cdot(\th,\th)\geq c\|\th\cdot\nu_\B\|_{{W^{1,2}}(\S)}^2.
    \]
\end{enumerate}
\end{theorem}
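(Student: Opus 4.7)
My plan is to treat all three statements through a spherical-harmonics decomposition of the shape Hessian at the ball, extending the first-order shape calculus from \cite{BGT} one order further. \textbf{First,} the principal eigenvalue $\lambda_{c_i,c_b}$ is simple (by the cooperative structure and a Krein--Rutman-type argument) and smooth with respect to $\mathcal{C}^{2,\eta}$-perturbations, so standard shape calculus applied to \eqref{eq:egvPb} yields a Hadamard formula $\lambda'_{c_i,c_b}(\B)[\theta] = \int_{\S} h_1\, (\theta\cdot\nu_\B)$, where $h_1$ is a boundary trace built from the radial eigenpair $(u_0,v_0)$ on $\B$. Radial symmetry forces $h_1$ to be constant on $\S$; choosing $\mu$ equal to this constant matches the linear part of $\mathrm{Vol}'(\B)[\theta]=\int_\S \theta\cdot\nu_\B$, establishing the first-order claim.

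\textbf{Second,} for the Hessian, I would compute the second-order Hadamard formula for $\lambda_{c_i,c_b}$ at the ball: at a critical shape, this formula depends only on $\theta\cdot\nu_\B\in H^{1/2}(\S)$. By rotational invariance of the ball, the resulting quadratic form diagonalizes in the basis of real spherical harmonics $(Y_k^\ell)_{k\ge 0,\ell}$: writing $\theta\cdot\nu_\B=\sum_{k,\ell}\alpha_{k,\ell} Y_k^\ell$, one obtains
\[
\mathcal{L}''_{c_i,c_b,\mu}(\B)[\theta,\theta] = \sum_{k\ge 0} \mu_k \sum_{\ell}|\alpha_{k,\ell}|^2,
\]
where the volume constraint kills the $k=0$ mode and translation invariance of $\lambda_{c_i,c_b}$ forces $\mu_1=0$. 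To compute $\mu_k$ explicitly, I would linearize the full bulk-surface system around $(u_0,v_0,\Lambda)$ with normal perturbation $Y_k$; separation of variables gives $u'_k(r,\sigma)=\phi_k(r) Y_k(\sigma)$ and $v'_k(\sigma)=\psi_k Y_k(\sigma)$, where $(\phi_k,\psi_k)$ solves an explicit coupled radial ODE/algebraic system in which the Laplace--Beltrami eigenvalue $k(k+d-2)$ enters through the tangential part. Substituting back yields a closed-form expression for $\mu_k$ in terms of $(u_0,v_0)$, $\phi_k(1)$, $\psi_k$ and the parameters $(c_i,c_b,\Lambda)$.

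\textbf{Third,} for \ref{it:notlocalmin}, I would test against a single mode $k=2$ (for which $\int_\S \theta\cdot\nu_\B=0$ is automatic): the expression for $\mu_2$ should display a leading surface contribution proportional to $-(c_i+c_b)$ multiplied by a positive Rayleigh-type quantity reading off the boundary term in the variational characterisation of $\Lambda$. This sign change at $c_i+c_b=0$ is consistent with, and morally forced by, the non-existence result of Theorem~\ref{Th:NonExistence}; in the regime $c_i>-c_b$ the ball cannot even be a local minimiser. An alternative, softer route is to combine Theorem~\ref{Th:NonExistence} with a contradiction argument: if the ball were locally $\mathcal{C}^{2,\eta}$-minimal, the coercivity would persist along smooth families interpolating the ball with the minimising sequence of Theorem~\ref{Th:NonExistence}, which is incompatible with $\inf \lambda_{c_i,c_b}=-c_b<\lambda_{c_i,c_b}(\B)$.

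\textbf{Finally,} for \ref{it:localmin}, the goal is a uniform lower bound $\mu_k\ge c\,(1+k^2)$ for all $k\ge 2$, which yields the $W^{1,2}(\S)$-coercivity after Parseval. For high-frequency modes ($k\to\infty$), the Laplace--Beltrami eigenvalue $k(k+d-2)$ drives the radial ODE and makes $\mu_k$ grow like $k^2$ with a positive coefficient depending only mildly on the radial eigenpair. Low-frequency modes ($2\le k\le K$, $K$ fixed) are handled by an asymptotic expansion as $c_i+c_b\to-\infty$: one expands $(u_0,v_0,\Lambda)$ and then $(\phi_k,\psi_k)$ to extract the leading order of $\mu_k$ and show it is strictly positive. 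The main obstacle, and the locus where the restriction $2\le d\le 5$ should enter, is to make the low- and high-frequency regimes match \emph{uniformly in $k$}: the radial ODE for $\phi_k$ has Bessel-type structure whose quantitative estimates degrade in high dimension, so that controlling the cross-over range $k\sim|c_i+c_b|^{1/2}$ and ruling out sign changes of $\mu_k$ becomes increasingly delicate as $d$ grows. Consolidating the two regimes into a single uniform coercivity estimate is the technical heart of the proof.
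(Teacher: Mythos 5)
Your overall framework — diagonalize the shape Hessian at the ball in spherical harmonics via separation of variables for the linearized bulk–surface system — is precisely what the paper does (Propositions \ref{prop:second_shaped} and \ref{prop:diag_lag}). But both of your routes for item \eqref{it:notlocalmin} have genuine gaps, and your explanation of where $d\leq 5$ comes from in item \eqref{it:localmin} is a misdiagnosis.

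\textbf{Item \eqref{it:notlocalmin}.} Your primary plan — test against a single mode $k=2$ and extract a negative leading contribution proportional to $-(c_i+c_b)$ — cannot work as stated: the theorem must hold for \emph{all} $c_i>-c_b$, including $c_i+c_b$ arbitrarily small and positive, in which case there is no ``large'' parameter to make a single low mode negative. What makes the argument go is the structure of the diagonal coefficient
\[
a_k=\beta(\sigma_k-H)+\Bigl(\tfrac{\gamma}{\sigma_k-\lamti}+\delta\Bigr)(p_k+\partial_\nu u),
\qquad \beta=-u\,\partial_\nu u,
\]
together with two facts: (i) $\partial_\nu u=v(\lambda+c_b)$ has the same sign as $\lambda_{c_i,c_b}(\B)+c_b$, which is positive precisely when $c_i>-c_b$ (Proposition \ref{prop:lambda_cicb_est}), so $\beta<0$; and (ii) $p_k$ stays bounded uniformly in $k$ (Lemma \ref{Le:Comparison}), so the second summand is bounded while $\beta(\sigma_k-H)\to-\infty$ as $k\to\infty$. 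You must therefore test against a spherical harmonic of \emph{large} degree, not $k=2$. Your ``softer route'' via Theorem \ref{Th:NonExistence} is also flawed: failure to be a \emph{global} minimiser does not imply failure of second-order optimality conditions at a critical point, and the minimising sequences in Theorem \ref{Th:NonExistence} (thinning rectangles) are nowhere near the ball in $\C^{2,\eta}$, so there is no ``smooth family interpolating the ball with the minimising sequence'' along which a local coercivity estimate could be transported.

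\textbf{Item \eqref{it:localmin}.} The skeleton (uniform lower bound on $a_k$ for $k\geq 2$) is right, but the dimension restriction $2\leq d\leq 5$ does not come from a delicate matching of low- and high-frequency regimes or from degradation of Bessel-type estimates. In the paper, the bounds on $p_k$ from Lemma \ref{Le:Comparison} are uniform in $k$, and the whole point of the asymptotic analysis as $c_i\to-\infty$ (with $c_b=0$ WLOG) is that the dominant positive term satisfies $\beta(\sigma_k-H)\geq v^2\lambda_{c_i}^2(d+1+o(1))$ while the cross term is $O\bigl(v^2\lambda_{c_i}^2\,|(d-2)-l_\infty|\bigr)$, with $l_\infty>0$ the limit of $\lambda_{c_i}-c_i$ from Lemma \ref{le:cvg_lambar}. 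The single algebraic condition $2\bigl((d-2)-l_\infty\bigr)<d+1$ then suffices, and this holds for $d\leq 5$ purely because $l_\infty>0$ (see Remark \ref{rk:high_dim}). No cross-over regime $k\sim|c_i+c_b|^{1/2}$ is needed, because the $p_k$ bounds are already $k$-uniform.

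In short: your diagonalization plan is correct in spirit, but you need (a) the sign identity $\beta=-u\,\partial_\nu u$ with $\partial_\nu u=v(\lambda+c_b)$ and a high-frequency test function for item \eqref{it:notlocalmin}, and (b) $k$-uniform bounds on the radial profiles $p_k$ plus the simple inequality $2(d-2-l_\infty)<d+1$ for item \eqref{it:localmin}, rather than a frequency-matching argument.
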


 This result relies on a thorough second-order study of the functional $\lambda_{c_i,c_b}$, thus involving the computations of its first and second shape derivatives and their analysis at the ball (see the introduction of Section \ref{sect:second_order} for more details).  The interest of this theorem is two-fold:  
\begin{itemize}
    \item 
in the case $c_i>-c_b$, \textit{i.e.} when the boundary of the domain is supposed to be more favorable than the interior, a regime for which there is no global minimiser of $\lambda_{c_i,c_b}$ (see Theorem \ref{Th:NonExistence}), then the ball does not satisfy second-order optimality conditions, 
\item  in dimensions $2\leq d\leq5$, in the regime $c_i\ll -c_b$, \textit{i.e.} when the interior of the domain is assumed to be much more favorable than the boundary, the ball verifies second-order optimality conditions. 
\end{itemize}

The fact that the second statement is restricted to low dimensions is mainly due to technical difficulties, as we expect that an analogous claim in higher dimensions should follow from an estimate of the first Robin eigenvalue of the ball.

Although we do not show a complete local minimality statement in the regime $c_i\ll-c_b$, we believe that here also the gap to fill is merely technical (see Remark \ref{rk:locmin}), and that in any case this result constitutes strong evidence of this claim. 

\subsection{Bibliographical references}\label{Se:Bib} 
We survey some related works.

\textbf{Optimisation problems in population dynamics.}
The study of optimal control and shape optimisation problems arising from population dynamics has been the subject of a great interest in recent years. It amounts to answering the following question: what is the best way to design an environment in order to optimise various quantities, be it the survival ability, the total population size, etc.  These optimisation problems can usually be sorted out into two categories. The first one is the class of non-energetic optimisation problems, such as the total population size, which directly deals with the non-linear population dynamics' model  (see \cite{Mazari_Nadin_Privat_KPP} and the references therein). The second class of problems, much closer to the one we study in this paper, is the class of energetic or spectral problems, where a prototypical example is given by the minimisation of the first eigenvalue of $-\Delta -V$ (with some constraints on $V$). While several qualitative results relying on the isoperimetric inequality and Talenti inequalities have been shown (see the next paragraph), this is an old problem which is still not yet fully understood. We refer to \cite{zbMATH02194918,CoxLipton,Cox1990,zbMATH05530397,LamboleyLaurainNadinPrivat} and the references therein.

\textbf{Talenti inequalities for cooperative systems.}
Talenti inequalities are $L^p$-comparison principles for elliptic or parabolic systems. A Talenti inequality writes, in its most general form, as follows: letting $L$ be an elliptic or parabolic differential operator whose coefficients might depend on $x$, and $f$ be a source term, consider the solution $u$ of the PDE
\[ Lu=f\text{ in }\O,\text{ along with some boundary conditions.}\] Then, letting $L^*$, $\Om^*$  and $f^*$  be ``symmetrised" versions of $L$, $\Om$ and $f$ (depending also on the boundary conditions considered), and $v$ be the solution of 
\[ L^* v=f^*\text{ in }\O^*,\text{ with compatible boundary condition},\] it holds
\[ \text{ For a certain range of exponents $p$, } \Vert u\Vert_{L^p(\O)}\leq \Vert v\Vert_{L^p(\O^*)}.\] The most celebrated Talenti inequality is the original one proven by G. Talenti (see \cite[Theorem 1]{zbMATH03531830}), which deals with the Laplacian with  homogeneous Dirichlet boundary conditions. More specifically, letting $\O=\B$ be the centered ball of radius 1 and letting, for any $f\in L^2(\B)$ with $f\geq 0$ the function $f^*$ be the Schwarz rearrangement of $f$ (essentially, $f^*$ is the only radially symmetric, non-increasing function that has the same distribution function as $f$), consider the two equations 
\[ \begin{cases}-\Delta u=f&\text{ in }\B, 
\\ u\in W^{1,2}_0(\O)\end{cases}\text{ and }\begin{cases}-\Delta v=f^*&\text{ in }\B,\\ v\in W^{1,2}_0(\B).\end{cases}\] Then it was established in \cite{zbMATH03531830} that 
\[ \forall p\in [1,\infty]\,, \Vert u\Vert_{L^p(\O)}\leq \Vert v\Vert_{L^p(\O)}.\]
These inequalities are powerful tools when handling spectral optimisation problems, including non-symmetric ones (see \cite{NadinSiam}) and have known, over the last four decades, several generalisations to other types of operators and of boundary conditions (see \cite{zbMATH07748335}). We underline the fact that boundary conditions are extremely important, and let us point out that recently, following the independent works of Langford in \cite{Lanford_Robin} on the one hand and of Alvino, Nitsch \& Trombetti in \cite{zbMATH07748335} on the other hand, a lot of effort has been devoted to understanding symmetrisation problems with Robin boundary conditions (see \cite{zbMATH07748335,zbMATH07727514,Lanford_Robin,Langford_Neumann,zbMATH07527824,zbMATH07916632}). The situation is much more delicate than with Dirichlet boundary conditions. As announced, our approach to Theorems \ref{theo:rigidity}--\ref{theo:rigidity1}  builds and extends the methodology and results of J. J. Langford in \cite{Lanford_Robin}. Note that as a side result, we recover as a consequence of our analysis a Talenti inequality which deals with a rearrangement of the solution of a problem with Dirichlet boundary conditions, thus giving interesting properties of the solution of 
\[\begin{cases}-\Delta u=f&\text{ in }\B\,, 
\\ u=w&\text{ on }\S\end{cases},\] where the terms $f$ and $w$ are spherically rearranged (see Corollary \ref{Co:DirichletData}). We also mention the recent article of Mondino \& Vedovato \cite{zbMATH07384538}, which provides a wide class of Talenti inequalities on manifolds. 

Let us say that this type of result is not specific to scalar equations. In fact, as became apparent in recent contributions (such as \cite{CNT,Girardin_Mazari}), when it comes to symmetry properties of elliptic systems the relevant structure is rather the fact that the associated operator enjoys a maximum principle. In general, one should therefore expect Talenti-type inequalities for cooperative systems. This idea was for instance used in \cite{Girardin_Mazari} for the optimal control of a system of cooperating species in a torus. In the case under consideration in this paper, the fact that the two species live in different domains (\textit{i.e.} in $\Om$ or its boundary $\Sigma$) complexifies the matter, and we have to be careful when handling it. 

\textbf{Shape optimisation problems involving surface terms.}
The final class of questions this article fits into is that of shape optimisation problems involving surfaces. When dealing with these, 
in contrast with the classical context of spectral shape optimisation 
there is an additional difficulty stemming from the fact that the elliptic system \eqref{eq:egvPb} we are concerned with involves an equation on a moving surface.  Indeed, when minimising $\Lambda_\O(f,g)$ with respect to $\O$ under a volume constraint (assuming for simplicity that $f$ and $g$ are fixed constants), one cannot expect that any of the usual techniques to establish existence work and, in general, existence does fail. More specifically, a major obstruction comes from the fact that the convergence of a sequence of sets $\{\O_k\}_{k\in \N}$ for any ``natural" topology (\textit{e.g.} Hausdorff, or the topology induced by the convergence of the resolvants) does not imply a strong enough convergence of the boundaries  $\{\Sigma_k\}_{k\in \N}$ that would allow to pass to the limit for the surface energy in the variational formulation \eqref{Eq:Eigenvalue},  unless some stringent, $\mathscr C^{1,1}$ bounds of the boundary are imposed. Such difficulties have already been encountered in the literature, and we refer for instance to \cite{zbMATH06163678,Dalphin_18} and also to \cite{arXiv:2206.04357} in which the authors consider energies depending on a PDE set on a variable surface. 

Let us now give some instances of spectral optimisation problems set on surfaces. One of the most famous examples is the Brock-Weinstock inequality proven in \cite{zbMATH01584638}, which states that among all Lipschitz  sets of fixed volume, the Euclidean ball maximises the first Steklov eigenvalue. Another paradigmatic example is given by the maximisation of the first non-trivial eigenvalue of the Laplace-Beltrami operator on a  two-dimensional surface with fixed area. Requiring that admissible surfaces are diffeomorphic to the $2$-sphere, the Hersch inequality shown in \cite{zbMATH03356292} states that this eigenvalue is maximised at the $2$-sphere.  Closest to our interest is the Wentzell-Laplace problem, which couples a diffusion inside the domain with one on its boundary. Namely, one aims at maximising in $\O$ the first eigenvalue $\sigma=\sigma(\O)$ associated with the problem
\[ \begin{cases}
-\Delta u=0&\text{ in }\O\,, 
\\ -\Delta_\Sigma u+\partial_\nu u=\sigma u&\text{ on } \Sigma,
\end{cases}\]
where $\Delta_\Sigma$ is the tangential Laplace operator on $\Sigma:=\partial \O$. Dambrine, Kateb \& Lamboley conjectured in \cite{DKL16} that the ball should maximise $\sigma$ under volume constraint. We emphasise the fact that their conjecture is backed up by a fine second-order analysis, from which we draw inspiration for proving Theorem \ref{thm:local_ball}.

\textbf{Structure of the paper.}
The paper is organised linearly. In Section \ref{Se:Talenti}, we prove the Talenti-type inequalities associated to the optimal control problem, namely Proposition \ref{Pr:FK} and Theorems \ref{theo:rigidity} and \ref{theo:rigidity1}. Section \ref{sect:shapediff} is devoted to showing the results in connection with the spectral shape optimisation problem, Theorems \ref{Th:NonExistence} and \ref{thm:local_ball}. 

\section{Optimal control and Talenti comparison principles}\label{Se:Talenti}
This section is dedicated to the proof of our Talenti comparison results, namely Theorems \ref{theo:rigidity} and \ref{theo:rigidity1}. 
\subsection{Preliminary material}
We start by recalling some standard inequalities and comparison principles regarding cap symmetrisation, for which we refer to \cite{zbMATH03953325,zbMATH05042914}. Although these statements might be less known than their analogues for the standard Schwarz symmetrisation in $\R^d$, let us note that they can be obtained in a straightforward manner using the observation that, in polar coordinates, the cap symmetrisation is nothing but a Steiner symmetrisation in all the angular coordinates (see also \cite{zbMATH07384538}).  Using integration in polar coordinates, the statements on $\S$ thus lead to similar equalities/inequalities on $\B$ that we also recall below. 

First, the equimeasurability of the rearrangement yields equality of the $L^p$ norms: if $p\in[1,\infty)$, for any $f\in L^p(\B)$ and $g\in L^p(\S)$ it holds
\begin{equation}\label{Eq:RearNorm}
\int_\B |f|^p=\int_\B |f^{\sharp}|^p\,, \int_{\S} |g|^p=\int_{\S}|g^{\sharp}|^p.\end{equation}  Furthermore, for any $f_1\,, f_2\in L^2(\B)$, $r\in(0,1)$ and $g_1\,, g_2\in L^2(\mathbb{S}^{d-1}_r)$, the Hardy-Littlewood inequality states that 
\begin{equation}\label{Eq:HL}
\int_\B f_1f_2\leq \int_\B f_1^{\sharp}f_2^{\sharp}, \text{ and for any }\th\in(0,\th_{\max}^r), \int_{K_r(\th)} g_1g_2\leq \int_{K_r(\th)}g_1^{\sharp}g_2^{\sharp}.
\end{equation}  This inequality is rigid in the following sense: if $g_1$ is not constant and if, for any $t\in\R$, $|\{g_2=t\}|=0$, then 
\begin{multline}\label{Eq:HLR}
\int_{\S} g_1g_2=\int_\S g_1^\sharp g_2^\sharp\\\text{  if, and only if, for any $t, \tau\in\R$, either $\{g_1>t\}\subset \{g_2>\tau\}$ or $ \{g_2>\tau\}\subset \{g_1>t\}$.}
\end{multline}
 In particular, if $g_2=g_2^\sharp$ is symmetric decreasing this implies that each $\{g_1>t\}$ is a spherical cap, giving in turn $g_1=g_1^\sharp$. 

 The Hardy-Littlewood inequality combined with equality of the norms implies that for any $f_1\,, f_2\in L^2(\B)$ and $g_1\,, g_2\in L^2(\S)$
\begin{equation}\label{Eq:Contraction}
 \int_\B (f_1^{\sharp}-f_2^{\sharp})^2\leq\int_\B (f_1-f_2)^2, \int_{\S}(g_1^{\sharp}-g_2^{\sharp})^2\leq \int_{\S}(g_1-g_2)^2.\end{equation}
 Finally, the Poly\'a-Szeg\"{o} inequality states that, letting $\nabsig$ denote the tangential gradient,
\begin{equation}\label{Eq:FK1}
\forall v \in W^{1,2}(\S)\,, \int_{\S}\left|\nabsig v^{\sharp}\right|^2\leq \int_{\S}|\nabsig v|^2\end{equation}
and 
\begin{equation}\label{Eq:FK2}
\forall u\in W^{1,2}(\B)\,, \int_\B \left|\n u^{\sharp}\right|^2\leq \int_\B|\n u|^2.\end{equation}
\begin{remark}
To be more specific, \eqref{Eq:FK2} follows from \eqref{Eq:FK1} and the results of Berestycki \& Lachand-Robert \cite[Lemma 2.10]{BerLach}, which guarantees that for any function $u \in W^{1,2}(\B)$, it holds 
\[ \int_\B |\n_ru^\sharp|^2\leq \int_\B |\n_ru|^2\,\]
where $\n_r$ denotes he radial part of the gradient.\end{remark}


\subsection{Proof of Proposition \ref{Pr:FK}}
Proposition \ref{Pr:FK} follows from a combination of the previous inequalities. 
\begin{proof}
Letting $(u,v)$ be an eigencouple associated with $\Lambda_\B(f,g)$ (\textit{i.e.} a solution of \eqref{eq:egvPb} for $\Om=\B$), it holds
\begin{align*}
\Lambda_\B(f,g)&=\frac{\int_\B |\n u|^2+\int_{\S} |\n v|^2-\int_\B fu^2-\int_{\S} gv^2+\int_{\S} (u-v)^2}{\int_\B u^2+\int_{\S} v^2}
\\&\geq\frac{ \int_\B |\n u^{\sharp}|^2+\int_{\S} |\nabsig v^{\sharp}|^2-\int_\B f^{\sharp}(u^{\sharp})^2-\int_{\S} g^{\sharp}(v^{\sharp})^2+\int_{\S} (u^{\sharp}-v^{\sharp})^2}{\int_\B (u^{\sharp})^2+\int_{\S} (v^{\sharp})^2}
\\&\text{ by \eqref{Eq:RearNorm}-\eqref{Eq:HL}-\eqref{Eq:Contraction}-\eqref{Eq:FK1}-\eqref{Eq:FK2}}
\\&\geq \Lambda_\B(f^{\sharp},g^{\sharp}),
\end{align*}
thus finishing the proof of the proposition.
\end{proof}
\subsection{Proof of Theorem \ref{theo:rigidity1}} 
This paragraph is dedicated to the proof of the rigidity of the Langford comparison result (recalled in Theorem \ref{Th:Langford}).
  In \cite{Lanford_Robin}, one of the main ideas for proving this inequality consists in showing that if $u:\B\to\R$ is a solution of a PDE with Robin boundary conditions, then its symmetrisation $u^\sharp$ is a subsolution of the symmetrised problem (see \cite[Theorem A.3]{Langford_Neumann}). As it will reveal important to prove Theorem \ref{theo:rigidity1}, in Lemma \ref{Le:SubHarmonicity} below we briefly recall a precised version of this result  (which follows Baernstein \cite{Baernstein_unified_approach} for the strategy of its proof). We also interpret it in terms of an optimal control problem  (in Remark \ref{rk:subh}), using the methodology of Burton in \cite{Burton}. It is important to observe that this result is what allows to tie rigidity in the Talenti inequality with optimality for an energetic optimal control problem  (see Remark \ref{rk:subh}).
  
 Throughout this section, for functions $\psi, \phi\in L^\infty(\B)$ we write $\psi\sim \phi$  if $\psi^\sharp=\phi^\sharp$.
 \begin{lemma}\label{Le:SubHarmonicity}
 Let $f\in L^\infty(\B)$ with $f\geq 0$, and let $u$ solve 
 \begin{equation}\label{Eq:LangfordInter1}
 \begin{cases}
 -\Delta u=f&\text{ in }\B\,, 
 \\ \partial_\nu u+\beta u=0&\text{ on }\S.\end{cases}\end{equation}
 Then $u^{\sharp}$ satisfies  $-\Delta u^\sharp\leq f^\sharp$ in the weak sense:
 \[ \forall\phi\in \mathscr C^2_c(\B)\,\text{ with } \phi\geq0 \text{ and }\phi^{\sharp}=\phi\,, -\int_\B u^{\sharp}\Delta\phi\leq \int_\B f^{\sharp} \phi.\]
 Furthermore, if 
 \[-\Delta u^{\sharp}=f^{\sharp}\text{ in }\B\] (meaning there is equality for any such $\phi$ in the above statement)  then for each $\phi\in \mathscr C^2_c(\B)$ with $\phi\geq0$ and $\phi^{\sharp}=\phi$, it holds 
\begin{equation}\label{Eq:L2} \forall\psi \in L^\infty(\B) \text{ with }  \psi\sim \phi\,,\left(\int_\B u\psi=\int_\B u^{\sharp}\phi\Rightarrow \int_\B f\psi=\int_\B f^{\sharp}\phi\right). \end{equation} 
 \end{lemma}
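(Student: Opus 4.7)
My plan follows the star-function approach of Baernstein \cite{Baernstein_unified_approach}, as adapted to Robin problems by Langford \cite{Langford_Neumann,Lanford_Robin}, for the subsolution inequality, and combines a Burton-style variational interpretation of the cap rearrangement \cite{Burton} with the rigidity of the Hardy-Littlewood inequality \eqref{Eq:HLR} for the equality case.

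\textbf{Subsolution inequality.} I introduce the star function $u^{\ast}(r,\theta):=\int_{K_r(\theta)} u^{\sharp}$, which by Hardy-Littlewood also equals $\sup\{\int_E u : E\subset \mathbb{S}^{d-1}_r,\,\mathcal{H}^{d-1}(E)=\mathcal{H}^{d-1}(K_r(\theta))\}$, and analogously $f^{\ast}$. Writing $-\Delta u = f$ in polar coordinates, applying the co-area formula on each sphere $\mathbb{S}^{d-1}_r$ to the superlevel sets of the restriction of $u$ there, and using the spherical isoperimetric inequality, one derives a pointwise subsolution estimate $-L u^{\ast} \leq f^{\ast}$ for a suitable degenerate elliptic operator $L$ in the $(r,\theta)$-plane. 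Given a non-negative cap-symmetric test $\phi\in \mathscr{C}^2_c(\B)$, the layer-cake decomposition of $\phi$ (each superlevel set being a union of caps) coupled with integration by parts against the $(r,\theta)$-inequality for $u^{\ast}$ is then translated back to an integral over $\B$, yielding the inequality $-\int u^{\sharp} \Delta \phi \leq \int f^{\sharp} \phi$.

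\textbf{Rigidity.} Under the hypothesis $-\Delta u^{\sharp} = f^{\sharp}$ in the weak cap-symmetric sense, fix a test $\phi$ with $\phi^{\sharp}=\phi$ and $\psi\sim \phi$ with $\int u\psi=\int u^{\sharp} \phi$. The plan is to introduce the Robin auxiliary solutions $V_\phi, V_\psi, v_{\mathrm{sym}}$ corresponding to the sources $\phi, \psi, f^{\sharp}$ (with $V_\phi = V_\phi^{\sharp}$ and $v_{\mathrm{sym}} = v_{\mathrm{sym}}^{\sharp}$ by symmetry of the Robin problem with cap-symmetric source). Two Green identities using the Robin boundary conditions on $u$ and on $v_{\mathrm{sym}}$ produce the key identities
\begin{equation*}
    \int u\psi = \int fV_\psi, \qquad \int v_{\mathrm{sym}}\phi=\int f^{\sharp} V_\phi.
\end{equation*}
Combining these with the Hardy-Littlewood bound $\int fV_\psi \leq \int f^{\sharp} V_\psi^{\sharp}$, with the Langford comparison $V_\psi\preceq V_\phi$ (yielding $\int f^{\sharp} V_\psi^{\sharp} \leq \int f^{\sharp} V_\phi$ through the cap-symmetry of $f^{\sharp}$) and with the Langford comparison $u\preceq v_{\mathrm{sym}}$ (yielding $\int u^{\sharp} \phi \leq \int v_{\mathrm{sym}}\phi$), I obtain the chain
\begin{equation*}
    \int u\psi \leq \int u^{\sharp}\phi\leq \int v_{\mathrm{sym}}\phi = \int f^{\sharp} V_\phi, \qquad \int u\psi = \int fV_\psi \leq \int f^{\sharp} V_\phi.
\end{equation*}
The weak-equality hypothesis $-\Delta u^{\sharp} = f^{\sharp}$, together with an approximation of $V_\phi$ by cap-symmetric compactly supported functions, allows to establish the closing identity $\int u^{\sharp} \phi = \int f^{\sharp} V_\phi$, so that the whole chain collapses into equalities. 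The Hardy-Littlewood rigidity \eqref{Eq:HLR} applied to the equality $\int f V_\psi = \int f^{\sharp} V_\psi^{\sharp}$ and to the equality $\int u\psi = \int u^{\sharp}\psi^{\sharp}$ coming from the hypothesis then transfers the level-set alignment from $(u,\psi)$ to $(f,\psi)$, which gives $\int f\psi = \int f^{\sharp} \psi^{\sharp} = \int f^{\sharp} \phi$.

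\textbf{Main obstacle.} The hardest step is to derive the identity $\int u^{\sharp} \phi = \int f^{\sharp} V_\phi$ from the weak hypothesis alone, since $V_\phi$ is not compactly supported in $\B$ and therefore the weak equality $-\Delta u^{\sharp} = f^{\sharp}$ cannot be directly tested against it. This requires a delicate approximation procedure and careful analysis of the boundary contributions, controlled via the subsolution estimate from the first step; this step essentially amounts to recovering a Robin-type boundary condition for $u^{\sharp}$, which is not granted a priori by the sole knowledge that the weak Laplace equation holds in the interior.
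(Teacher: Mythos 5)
Your subsolution step follows the classical Baernstein star-function route (derive a pointwise subsolution inequality for $u^{*}(r,\theta)=\int_{K_r(\theta)}u^{\sharp}$ and translate back). This is plausible as a strategy but remains a sketch, and it is noticeably heavier than what the paper actually does. The paper avoids any pointwise inequality for $u^{*}$: it exploits the approximation $\Delta\eta=\lim_{\e\to0}\e^{-1}(K_\e\star\eta-\eta)$ for a radial kernel $K$, together with a Riesz--Sobolev inequality, to get directly the chain
\[
-\int_\B f^{\sharp}\phi\;\leq\;-\int_\B f\psi\;\leq\;\int_\B u^{\sharp}\Delta\phi
\]
for any $\psi\sim\phi$ with $\int_\B\psi u=\int_\B\phi u^{\sharp}$, where the left inequality is Hardy--Littlewood and the right one is obtained by passing to the limit in the kernel bound. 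The crucial point you are missing is that this chain already contains the rigidity for free: under the hypothesis $-\Delta u^{\sharp}=f^{\sharp}$ the two endpoints coincide, hence both inequalities are equalities, and in particular $\int_\B f\psi=\int_\B f^{\sharp}\phi$. That is the whole proof.

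Your rigidity argument, by contrast, has a genuine gap. You acknowledge it yourself: the closing identity $\int u^{\sharp}\phi=\int f^{\sharp}V_\phi$ cannot be tested against the interior weak equality because $V_\phi$ is not compactly supported, and the approximation you invoke is exactly where the argument breaks, since it would require extracting a boundary condition for $u^{\sharp}$ that the hypothesis does not grant. Moreover, even if this were fixed, the final Hardy--Littlewood rigidity step does not deliver the claim: equality $\int fV_\psi=\int f^{\sharp}V_\psi^{\sharp}$ only aligns the level sets of $f$ with those of $V_\psi$, and equality $\int u\psi=\int u^{\sharp}\psi^{\sharp}$ only aligns the level sets of $u$ with those of $\psi$; nothing in your plan links $V_\psi$ to $\psi$, so one cannot conclude $\int f\psi=\int f^{\sharp}\phi$ from these two alignments. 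In addition the rigidity statement \eqref{Eq:HLR} has non-degeneracy hypotheses ($g_1$ not constant, $|\{g_2=t\}|=0$ for all $t$) that you have not verified. The whole apparatus of Robin auxiliary solutions and Green identities is unnecessary: the rigidity follows, as above, from the collapse of the subsolution chain once that chain is written for an arbitrary $\psi\sim\phi$ with $\int_\B u\psi=\int_\B u^{\sharp}\phi$.
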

 
 \begin{proof}[Proof of Lemma \ref{Le:SubHarmonicity}]
The key observation, as in \cite{Langford_Neumann}, is the following: for any $\eta\in \mathscr C^2(\B)$ and any smooth, positive, radially symmetric decreasing kernel $K:\R^d\to\R$ with $\int_{\R^d}| x|^2K(x)dx=1$, letting $K_\e:=\e^{-d}K(\e^{-1} x)$ it holds 
\begin{equation}
    \label{eq: point_lap}
 \Delta \eta=\lim_{\e\to 0}\frac{(K_\e\star\eta)-\eta}\e,\end{equation} and the convergence is locally uniform in $\B$. This follows from a second-order Taylor expansion of $K_\eps\star \eta$ at a point $x\in \B$, noting that the first-order term vanishes since $K$ is radially symmetric.  Now let $u$ solve \eqref{Eq:LangfordInter1} and take $ \phi\in \mathscr C^2_c(\B)$ with $\phi\geq0$ and  $\phi^{\sharp}=\phi$. As $u$ and $u^{\sharp}$ are equimeasurable, the maps 
\[ T_1:\{ \psi\in L^\infty(\B) \,, \psi\sim \phi\}\ni \psi\mapsto \int_\B \psi u\text{ and }T_2:\{ \psi\in L^\infty(\B) \,, \psi\sim \phi\}\ni \psi\mapsto \int_\B \psi u^\sharp\] have the same range (as this range only depends on the distribution function of $u$, by the Cavalieri principle).  Thus, we can find some  $\psi\in L^\infty(\B)$  with $\psi\sim \phi$ such that 
\begin{equation}\label{eq:in} \int_\B \psi u=\int_\B \phi^{\sharp}u^{\sharp}.\end{equation} Now, let $\e>0$ be fixed.  We use the following Riesz-Sobolev-type inequality (see \cite[Corollary 4]{Baernstein_unified_approach}) 
\[
\int_{\B\times\B}u(x)\psi(y)K(x-y)\leq\int_{\B\times\B}u^\sharp(x)\psi^\sharp(y)K(x-y)
\]
and combine it with \eqref{eq:in} to get \BBB
\[  \int_\B (K_\e\star u-u)\psi=\int_\B u\left(K_\e\star\psi-\psi\right)\leq \int_\B u^{\sharp}\left(K_\e\star\phi^{\sharp}-\phi^{\sharp}\right).\]
Dividing the above by $\eps>0$, we take the limit as $\e\to 0$ and use \eqref{eq: point_lap} on each side of the inequality to get 
\[-\int_\B f \psi\leq \int_\B u^{\sharp}\Delta \phi^{\sharp}\]
 (note that although \eqref{eq: point_lap} is stated for $\C^2$ functions, it holds for $W^{2,2}$ functions by approximation). 
Applying
the Hardy-Littlewood inequality \eqref{Eq:HL} we obtain 
\[-\int_\B f^{\sharp}\phi^\sharp\leq-\int_\B f \psi\leq \int_\B u^{\sharp}\Delta \phi^{\sharp},\]
which constitutes the announced inequality. In the equality case, \textit{i.e.} if $-\Delta u^{\sharp}=f^{\sharp}$ in $\B$, then $-\int_\B f^{\sharp}\phi^\sharp= \int_\B u^{\sharp}\Delta \phi^{\sharp}$ and we therefore directly obtain from the above that 
\[\int_\B f\psi=\int_\B f^{\sharp}\phi^{\sharp},\]
thus concluding the proof of the lemma.
\end{proof} 

\begin{remark}[A comment on Lemma \ref{Le:SubHarmonicity}]\label{rk:subh}
The main interest of Lemma \ref{Le:SubHarmonicity} is that it allows to establish very quickly that if $-\Delta u^{\sharp}=f^\sharp$ then $f$ is the solution of an energy minimisation problem, which allows to derive the rigidity of the Talenti inequality from the rigidity of the P\'oly\`a-Szeg\"{o} inequality.  Indeed, condition \eqref{Eq:L2} implies that $f$ solves $\max_{\tilde f\sim f}\int_\B \tilde f u$. Burton \cite{Burton} showed that this implies the existence of an increasing function $F:\R\to \R$ such that $f=F(u)$. This is typically the optimality condition that one would expect from the minimisation of the energy associated with $u$, see \eqref{Eq:RobinEnergy} below.
\end{remark}

We are now ready to pass to the proof of Theorem \ref{theo:rigidity1}, which we divide in several steps. Recall that $f\in L^\infty(\B), w\in L^\infty(\S)$ with $f, w\geq 0$ and that $(u,v)$ is a solution to  \eqref{eq:syst_u,v_Rob} with the hypothesis that $\Vert u\Vert_{L^p(\B)}=\Vert v\Vert_{L^p(\B)}$. 

\begin{proof}[Proof of Theorem \ref{theo:rigidity1}]
We introduce the energy functional
 \begin{equation}\label{Eq:RobinEnergy} \mathcal E:L^\infty(\B)\times W^{1,2}(\B)\ni (\tilde f,\tilde u)\mapsto \frac12\int_\B |\n \tilde u|^2+\frac\beta2\int_{\S} \tilde u^2-\int_\B \tilde u\tilde f.
 \end{equation}
As $v$ is the unique minimiser of $\mathcal E(f^{\sharp},\cdot)$ among $W^{1,2}(\B)$ functions, then applying \eqref{Eq:RearNorm}, \eqref{Eq:HL} and \eqref{Eq:FK2} to $\mathcal E(f^{\sharp},\cdot)$ we have in particular that $v=v^\sharp$ in $\B$.

 \textbf{Step 1: $u^\sharp=v$.} Let us start by proving that 
 \begin{equation}\label{Eq:EqualRearrangements}
 u^{\sharp}=v \text{ on }\B.
 \end{equation}
Introduce 
 \[\mathcal C:=\{ \psi\in L^\infty(\B)\,,\psi\geq 0\,, \psi\preceq v\}.\]  The set $\mathcal C$ is convex, and its extreme points are given by the set 
 \[ \mathcal K:=\{\psi\in L^\infty(\B)\,, \psi^{\sharp}=v\}\] (for these results, see \cite[Section 2]{ALT_optimization_prescribed_rearrangements}). The map $T:\mathcal C\ni \psi\mapsto \Vert \psi\Vert_{L^p(\B)}^p$ is strictly convex, and its maximisers are consequently all extreme points of $\mathcal C$. 
 Theorem \ref{Th:Langford} guarantees that $u\in \mathcal C$, while the assumption that $\Vert u\Vert_{L^p(\B)}=\Vert v\Vert_{L^p(\B)}$ implies that $u$ maximises $T$, since we know that $\max_{\mathcal C}T=T(v)$ by definition of $\mathcal{C}$. The equality \eqref{Eq:EqualRearrangements} follows.

\textbf{Step 2: $u=u^{\sharp}=v\text{ on }\S$.} As $u^\sharp=v$ in $\B$ by \eqref{Eq:EqualRearrangements}, in particular
 \[ \int_\B u^2=\int_\B v^2.\] Since in addition $v=v^\sharp$ we may apply Lemma \ref{Le:SubHarmonicity} with  $\phi=v\,, \psi=u$ to obtain 
 \begin{equation}\label{Eq:EnergyEqual}
\int_\B uf=\int_\B v f^{\sharp}. 
 \end{equation}
     Note that although Lemma \ref{Le:SubHarmonicity} is only stated for compactly supported functions $\phi$, \eqref{Eq:EnergyEqual} is derived using a straightforward approximation argument (multiplying by a compactly supported, radially symmetric cut-off function).

As they solve \eqref{eq:syst_u,v_Rob}, $u$ and $v$ minimise respectively $\mathcal E(f,\cdot)$ and $\mathcal E(f^{\sharp},\cdot)$ among $W^{1,2}(\B)$ functions. Using the weak formulation of \eqref{eq:syst_u,v_Rob}, we deduce 
 \[ \min_{\tilde u\in W^{1,2}(\B)}\mathcal E(f,\tilde u)=\mathcal E(f,u)=-\frac12\int_\B fu\,,\min_{\tilde v\in W^{1,2}(\B)}\mathcal E(f^{\sharp},\tilde v)=\mathcal E(f^{\sharp},v)=-\frac12\int_\B f^{\sharp}v.\]
The Poly\'a-Szeg\"{o} \eqref{Eq:FK2} and the Hardy-Littlewood \eqref{Eq:HL} inequalities give 
 \[-\frac12\int_\B uf= \mathcal E(f,u)\geq \frac12\int_\B |\n u^{\sharp}|^2+\frac\beta2\int_\S (u^{\sharp})^2-\int_\B f^{\sharp} u^{\sharp}=\mathcal E(f^{\sharp},v)=-\frac12\int_\B vf^{\sharp}.\] From \eqref{Eq:EnergyEqual} we deduce that these two inequalities are in fact equalities, so that in particular there is equality in the Poly\'a-Szeg\"{o} inequality  for a.e. sphere, \textit{i.e.} $\int_{\mathbb S^{d-1}_r}|\nabla_{\Sigma_r} u|^2= \int_{\mathbb S^{d-1}_r}|\nabla_{\Sigma_r} u^\sharp|^2$ for a.e. $r\in(0,1)$. Writing in polar coordinates, the rigidity of the latter \cite[Theorem 4.3]{zbMATH07384538} implies that, for a.e. $r\in (0,1)$, there exists an angle $\theta_r$ such that 
 \[ \forall \th\in(0,\th_{\max}^r), \ u(r,\theta)=u^{\sharp}(r,\theta+\theta_r)=v(r,\theta+\theta_r).\] 
In particular, passing to the limit as $r\to1$, up to a rotation we thus have 
\[ u=u^{\sharp}=v\text{ on }\S.\]

 \textbf{Step 3: $u=u^{\sharp}=v$ and $f=f^\sharp \text{ in }\B$.} We now prove that this implies (up to a rotation)
\begin{equation}\label{Eq:NDC}
u=u^{\sharp}=v\text{ in }\B.\end{equation}
Pick any $s\in (0,\mathcal H^{d-1}(\S))$ and define the spherical cap $E_s=E_s^{\sharp}\subset\S$ chosen so that $\mathcal H^{d-1}(E_s)=s$. Let $q_s$ be defined as 
\begin{equation}\label{Eq:Adjoints}
\begin{cases}
-\Delta q_s=0&\text{ in }\B\,, 
\\ \partial_\nu q_s+\beta q_s=\mathds 1_{E_s}&\text{ on }\S.
\end{cases}
\end{equation}We have
\[\int_\B fq_s=\int_\S \mathds 1_{E_s} u=\int_\S \mathds 1_{E_s} v=\int_\B f^{\sharp} q_s.
\]
Using an energy argument as the one used above for proving $v=v^\sharp$, we show again that for any $r\in (0,1]$, $q_s(r,\cdot)$ is symmetric decreasing in its angular coordinate. In combination with \eqref{Eq:HLR}, this implies that up to a rotation $f=f^\sharp$, thereby concluding the proof of rigidity.
\end{proof}

\subsection{Proof of Theorem \ref{theo:rigidity}}
There are two parts in Theorem \ref{theo:rigidity}, themselves divided in a comparison and a rigidity statement, which will all be proved independently.
\begin{proof}[Proof of the comparison result in item (i)]
We first extend the function $w\in L^\infty(\S)$ into a $L^\infty(\B)$ function (that we still call $w$) by setting     \[
        w(x) :=
        \begin{cases}
             w\left(\frac{x}{\norma{x}}\right) & \text{ if } x \in \B \setminus \{  0 \}, \\
            0 & \text{ if } x = 0.
        \end{cases}
    \]
Now, for any $\e\in(0,1)$, we define the annulus $\mathbb A_\e:=\{x \in \B\,, 1-\e\leq \Vert x\Vert\leq 1\}$, set 
\[ w_\e:=\frac1\e w\mathds 1_{\mathbb A_\e}\] and consider the solutions $u_\e\,, v_\e$ of 
    \begin{equation}
        \label{eq:definition_u_eps_and_v_eps}
        \begin{cases}
            -\Delta u_{\eps} =f+ w_\e& \text{ in } \B, \\
            \partial_\nu u_\e+\beta u_\e=0&\text{ on }\S        \end{cases}
        \qquad
        \begin{cases}
       -\Delta v_\e=f^{\sharp}+w_\e^{\sharp}&\text{ in }\B\,, 
       \\ \partial_\nu v_\e+\beta v_\e=0&\text{ on }\S.        \end{cases}
    \end{equation}
We are going to show that $\{u_\e\}_{\e\to 0}\,, \{ v_\e\}_{\e \to 0}$ converge, respectively, to $u$ and $v$ as $\e \to0$. Let us start by proving the uniform $W^{1,2}$ bounds
\begin{equation}\label{Eq:SobolevBound}
\limsup_{\e \to 0}\left(\Vert u_\e\Vert_{W^{1,2}(\B)}+\Vert v_\e\Vert_{W^{1,2}(\B)}\right)<\infty.
\end{equation}

\textbf{Step 1: proof of \eqref{Eq:SobolevBound}.}

We focus on showing that 
\[\limsup_{\e\to 0}\Vert u_\e\Vert_{W^{1,2}(\B)}<\infty,\] and the same proof will provide a similar bound for $v_\e$. First observe that there exists a constant $C>0$ such that for any $\delta\in(0,\frac{1}{2})$, for any $\p\in W^{1,2}(\B)$ it holds 
\begin{equation}\label{Eq:Trace}
\Vert \p\Vert_{L^2(\mathbb{S}^d_{1-\delta})}\leq C \Vert \p\Vert_{W^{1,2}(\B)}.\end{equation} Indeed, for $\delta\in (0,\frac{1}{2})$ the spheres $\mathbb{S}^d_{1-\delta}$ are uniformly Lipschitz and have uniformly bounded perimeters, which suffices (see \cite[Theorem 18.15]{zbMATH07647941}) to guarantee the uniformity of the above trace constant. Second, recall that the norms 
\begin{equation}\label{Eq:Norms}\Vert \p\Vert_{W^{1,2}(\B)}:=\int_\B |\n \p|^2+\int_{\B}\p^2\text{ and } \Vert \p\Vert_{N}^2:=\int_\B |\n \p|^2+\int_{\S}\p^2\end{equation} are equivalent (this is simply a consequence of the positivity of the first Robin eigenvalue). Thus, using $u_\e$ as a test function in the weak formulation of the equation on $u_\e$ we have
\begin{align*}
\int_\B|\n u_\e|^2+\beta\int_{\S}u_\e^2&=\int_\B fu_\e+\int_\B w_\e u_\e
\\&=\int_\B fu_\e+\frac1\e\int_{1-\e}^1 \left(\int_{\mathbb{S}^d_{r}} w\left(\frac{x}{\Vert x\Vert}\right) u_\e\right)dr 
\\&\leq\Vert f\Vert_{L^2(\B)}\cdot\Vert u_\e\Vert_{L^2(\B)}+\frac{C}\e \int_{1-\e}^1\left\Vert w\left(\frac{\cdot}{|\cdot|}\right)\right\Vert_{L^2(\mathbb{S}^{d-1}_{r})}\cdot\Vert u_\e\Vert_{W^{1,2}(\B)}dr
\\&\text{ thanks to \eqref{Eq:Trace},}
\\&\leq C'\left(\Vert f\Vert_{L^2(\B)}+\Vert w\Vert_{L^2(\S)}\right)\Vert u_\e\Vert_{W^{1,2}(\B)},
\end{align*} for some other constant $C'>0$. The equivalence of the norms defined in \eqref{Eq:Norms}  gives \eqref{Eq:SobolevBound}. 

\textbf{Step 2: convergence of the approximations.}

 From \eqref{Eq:SobolevBound} we deduce that there exist $u_0\,, v_0\in W^{1,2}(\B)$ such that, up to a subsequence, $u_\e\underset{\e\to 0}\rightarrow u_0\,, v_\e\underset{\e\to 0}\rightarrow v_0$ weakly in $W^{1,2}(\B)$, strongly in $L^2(\B)$. Up to further subsequence, it follows once again from \eqref{Eq:Trace} and integration in polar coordinates  that \[ \forall \p\in W^{1,2}(\B)\,, \int_\B w_\e \p\underset{\e\to 0}\rightarrow \int_{\S} w\p.\] Passing to the limit in the weak formulations of \eqref{eq:definition_u_eps_and_v_eps} thus proves that $u_0=u\,, v_0=v$ (where we recall that $(u,v)$ solve \eqref{Eq:theo1}). Finally, observe that from \eqref{Eq:Contraction}  one also has 
\begin{equation}\label{Eq:Contra2}
\lim_{\e\to0} u^\sharp_\e=u^\sharp \text{ and } \lim_{\e\to0} v^\sharp_\e=v^\sharp \text{ in }L^2(\B). 
\end{equation}
Let us now apply Theorem \ref{Th:Langford} to the approximations $u_\e,v_\e$. 
For a.e. $r\in(0,1)$, any $\th\in(0,\th_{\max}^r)$ and any $\e>0$,
\begin{align*}
\int_{K_r(\theta)} u_\e^{\sharp}
\leq \int_{K_r(\theta)} v_\e^{\sharp} \text{ by Theorem \ref{Th:Langford}}.
\end{align*}
 Using the co-area formula (and up to further subsequence), one deduces from \eqref{Eq:Contra2} that $u^\sharp_\e\to u^\sharp$ and $v^\sharp_\e\to v$ in $L^2(\mathbb{S}^{d-1}_r)$ for a.e. $r\in(0,1)$.  We can therefore pass to the limit in this inequality to get,  for a.e.  $r\in (0,1)$ and any $\th\in(0,\th_{\max}^r)$, 
\[ \int_{K_r(\theta)} u^{\sharp}\leq \int_{K_r(\theta)} v^{\sharp}.\] The conclusion follows.
    \end{proof}

\begin{remark}
     Note that we have used in the second step of the proof without further justification a (slightly) strengthened form of Theorem \ref{Th:Langford} (proven exactly in the same way): if $u,v$ solve    
    \begin{equation*}
        \begin{cases}
            -\Delta u = f & \text{ in  }\B,  
             \\ \parnu {u}+ \beta u = 0  & \text{ on }\S,
        \end{cases}
        \qquad \text{ and } \qquad
        \begin{cases}
            -\Delta v = \tilde f & \text{ in  }\B,  
             \\ \parnu v + \beta v = 0  & \text{ on }\S,       \end{cases}
        \end{equation*}
where $f$ and $\tilde f$ are non-negative functions in $L^\infty(\B)$ verifying $f\preceq \tilde f=(\tilde f)^\sharp$, then the concentration inequality $u\preceq v$ holds.
\end{remark}

 \begin{proof}[Proof of the rigidity result in item (i)]

 Assume that there exists $p\in(1,\infty)$ such that $\Vert u\Vert_{L^p(\B)}=\Vert v\Vert_{L^p(\B)}$.

 Since $u\preceq v$ thanks to the comparison result from item (i) which was proven above, we can show exactly in the same way as we did in \eqref{Eq:EqualRearrangements} that this implies $u^{\sharp}=v$ in $\B$.
As a consequence, using the same Cavalieri principle argument as in the proof of Lemma \ref{Le:SubHarmonicity}, we can find $(w'\,, f')\in L^\infty(\S)\times L^\infty(\B)$ such that $f'\sim f\,, w'\sim w$ and 
\begin{equation}\label{eq:dearrang_f_w}
    \int_\B  f'u+\int_\S  w'u=\int_\B f^\sharp v+\int_\S w^\sharp v.\end{equation} 
Let us introduce the energy functional
\begin{equation}\label{Eq:RobinEnergy2} \mathcal E:L^\infty(\S)\times L^\infty(\B) \times W^{1,2}(\B)\ni (\tilde w,\tilde f,\tilde u)\mapsto \frac12\int_\B |\n \tilde u|^2+\frac\beta2\int_{\S} \tilde u^2-\int_\S \tilde w\tilde u-\int_\B \tilde f\tilde u.
 \end{equation}
From \eqref{Eq:RearNorm}, \eqref{Eq:HL}, \eqref{Eq:FK2} and the weak formulation of \eqref{Eq:theo1}, it holds 
\begin{align*}-\frac12\int_\S w^\sharp v-\frac12\int_\B f^\sharp v&=\min_{\tilde w\sim w, \tilde f\sim f, \tilde u\in W^{1,2}(\B)\ } \mathcal E(\tilde w,\tilde f,\tilde u)\\
&\leq \mathcal E(w',f',u)=\frac12\int_\S wu+\frac12\int_\B fu-\int_\S w'u-\int_\B f'u.\end{align*} Thanks to \eqref{eq:dearrang_f_w} this gives 
\[ \int_\S w^\sharp v+\int_\B f^\sharp v\leq \int_\S w u+\int_\B fu,\] so that the equality 
\[\int_\S w^\sharp v+\int_\B f^\sharp v= \int_\S w u+\int_\B fu\] holds. We now proceed exactly as in Step 2 and 3 of the proof of Theorem \ref{theo:rigidity1} to conclude: first, this equality rewrites 
\[
\mathcal{E}(w^\sharp,f^\sharp,v)=\mathcal{E}(w,f,u),
\]
and the equality case in the Poly\'a-Szeg\"{o} inequality \eqref{Eq:FK2} thus implies that up to a rotation $u=u^{\sharp}=v\text{ on }\S$; then, using the function $q_s$ (introduced in \eqref{Eq:Adjoints})
\[
\int_\B fq_s+\int_\S wq_s= \int_\B f^\sharp q_s+\int_\S w^\sharp q_s, 
\]
and the equality case in the Hardy-Littlewood inequality \eqref{Eq:HLR} enables to conclude that up to the same rotation $f=f^\sharp$ and $w=w^\sharp$. 
 \end{proof}

 \begin{remark}\label{rk:loosen_rigid_compar}
     The very same arguments enable to show the same  comparison and rigidity results if $u,v$ verify instead
    \[
        \begin{cases}
            -\Delta u = f & \text{ in  }\B,  
             \\ { \partial_\nu u + \beta u = w }& \text{ on }\S,
        \end{cases}
        \qquad \text{ and } \qquad
        \begin{cases}
            -\Delta v = \tilde f & \text{ in  }\B,  
             \\ \parnu v + \beta v = \tilde w & \text{ on }\S,
        \end{cases}
    \]
for non-negative functions $f,\tilde f\in L^\infty(\B)$ and $w,\tilde w\in L^\infty(\S)$ with $f\preceq\tilde f=(\tilde f)^\sharp$ and $w\preceq\tilde w=(\tilde w)^\sharp$.
 \end{remark}
 
 We now iterate this first item to derive the comparison result for the fully coupled system \eqref{Eq:theo2}.
 
 \begin{proof}[Proof of item (ii)]
 We prove the comparison result and the rigidity one separately. For the former, we begin with the study of an auxiliary system.

 \textbf{Proof of the comparison result for an auxiliary system.}
 
  Let us first prove that if $(\tilde{u},\tilde{v})$ solve
 \begin{equation}\label{Eq:theo3}
 \begin{cases}
 -\Delta \tilde{u}-m_1 \tilde u=f&\text{ in }\B\,, 
 \\ \partial_\nu \tilde u+\beta \tilde u=w&\text{ on }\S\,, 
 \end{cases}\begin{cases} -\Delta \tilde v-m_1^\sharp\tilde v=f^{\sharp}&\text{ in }\B\,, 
 \\ \partial_\nu \tilde v+\beta \tilde v=w^{\sharp}&\text{ on }\S\end{cases}
 \end{equation}
 then 
 \begin{equation*}
 \tilde u\preceq \tilde v.
 \end{equation*}
 We follow an iterative procedure. Namely, consider $\{\tilde u_k\}_{k\in \N}\,, \{\tilde v_k\}_{k\in \N}$ defined iteratively as 
 \begin{multline}
 \tilde u_0=\tilde v_0\equiv 0\text{ and, for any $k\geq 1$, }\\\begin{cases}-\Delta \tilde u_{k+1}=f+m_1 \tilde u_k&\text{ in }\B\,, 
 \\ \partial_\nu \tilde u_{k+1}+\beta \tilde u_{k+1}= w &\text{ on }\S\,,\end{cases}\begin{cases}-\Delta \tilde v_{k+1}=f^{\sharp}+m_1^{\sharp} \tilde v_k&\text{ in }\B \,, \\ \partial_\nu \tilde v_{k+1}+\beta \tilde v_{k+1}= w^\sharp.\end{cases}\end{multline} Let us show that, for any $k\geq 1$, we have 
 \begin{equation}\label{Eq:Auxiliary2}
 \tilde u_k\leq \tilde u_{k+1}\leq \tilde u\,, \tilde v_k\leq \tilde v_{k+1}\leq \tilde v \text{ in }\B,
 \end{equation}
 and 
 \begin{equation}\label{Eq:Auxiliary3}
 \tilde u_k\preceq \tilde v_k.\end{equation}
We only prove \eqref{Eq:Auxiliary2} for the sequence $\{\tilde u_k\}_{k\in \N}$ as the proof is similar for $\{\tilde v_k\}_{k\in \N}$.
We proceed by induction. First, since $f\geq 0$  and $w\geq0$  the maximum principle implies $\tilde u_1\geq 0=\tilde u_0$, while the hypothesis $\lambda_1^\beta(m_1)>0$  ensures that the operator $-\Delta-m_1$ has a strong maximum principle, so that $\tilde u\geq 0$. Thus, setting $\bar u_1:=\tilde u_1-\tilde u$ we obtain 
\[-\Delta \bar u_1-m_1\bar u_1=-m_1\tilde u\leq 0 \text{ in }\B,\ \partial_\nu \bar u_1+\beta \bar u_1=0 \text{ on }\S,\] 
using also that $m_1\geq0$. The maximum principle implies $\bar u_1\leq 0$, which amounts to $\tilde u_1\leq \tilde u$. Now, assume that for $k\in \N\,, k\geq 1$ we have $\tilde u_{k-1}\leq \tilde u_k\leq \tilde u$. Set $\bar z_{k+1}:=\tilde u_{k+1}-\tilde u_k$. Since $\tilde u_{k-1}\leq \tilde u_k$, then $-\Delta \bar z_{k+1}\leq 0$ and $\partial_\nu \bar z_{k+1}+\beta\bar z_{k+1}=0$, thus deducing from maximum principle that $\bar z_{k+1}\leq 0$. Similarly, from $\tilde u_k\leq \tilde u$ we get that $\bar u_{k+1}:=\tilde u_{k+1}-\tilde u$ is non-positive, \textit{i.e.} that $\tilde u_{k+1}\leq \tilde u$. This concludes the proof of \eqref{Eq:Auxiliary2}.
On the other hand, \eqref{Eq:Auxiliary3} is proven by applying iteratively Theorem \ref{Th:Langford}, using the basic fact that for any two non-negative functions $f_1\,, f_2:\B\to\R$, $(f_1+f_2)^\sharp\preceq f_1^\sharp+f_2^\sharp$ and the Hardy-Littlewood inequality \eqref{Eq:HL}, as one also has $\tilde v=(\tilde v)^\sharp$ (by the same argument as in the proof of Theorem \ref{theo:rigidity1}).
We now show that 
\begin{equation}\label{Eq:Auxiliary4}
\tilde u_k\underset{k\to \infty}\rightarrow \tilde u\,, \tilde v_k\underset{k\to \infty}\rightarrow \tilde v\text{ weakly in $W^{1,2}(\B)$, strongly in $L^2(\B)$}.\end{equation}
To establish \eqref{Eq:Auxiliary4} observe first that 
\begin{equation*}
\limsup_{k\to \infty}\left(\Vert \tilde u_k\Vert_{W^{1,2}(\B)}+\Vert \tilde v_k\Vert_{W^{1,2}(\B)}\right)<\infty.
\end{equation*}
In fact, using first \eqref{Eq:Auxiliary2}  gives a uniform $L^\infty$ bound on $ -\Delta \tilde u_k$.  As
\[
\int_\B|\n \tilde u_k|^2+\beta\int_{\S}\tilde u_k^2=\int_\B (-\Delta \tilde u_k)\tilde u_k+\int_\S w \tilde u_k
\]
the equivalence of norm \eqref{Eq:Norms} gives the above bound on $\tilde u_k$ (the bound on $\tilde v_k$ is proven analogously). 
We deduce that up to subsequence $\{\tilde u_k\}_{k\in \N}$ converges weakly in $W^{1,2}(\B)$, strongly in $L^2(\B)$, to a function $\tilde u_\infty$. Passing to the limit in the weak formulation of the equation on $\tilde u_k$, we deduce that $\tilde u_\infty=\tilde u$  by uniqueness of the solution to \eqref{Eq:theo3} \BBB, yielding \eqref{Eq:Auxiliary4}.  Passing to the limit in \eqref{Eq:Auxiliary3} is then done as in the proof of the first item of Theorem \ref{theo:rigidity}, thus ensuring the announced comparison $ \tilde u\preceq \tilde v$.

 Let us comment on the fact that the very same proof again provides the comparison $ \tilde u\preceq \tilde v$ if $\tilde u,\tilde v$ verify this time
\[
 \begin{cases}
 -\Delta \tilde{u}-m_1 \tilde u=f&\text{ in }\B\,, 
 \\ \partial_\nu \tilde u+\beta \tilde u=w&\text{ on }\S\,, 
 \end{cases}\begin{cases} -\Delta \tilde v-m_1^\sharp\tilde v=f^{\sharp}&\text{ in }\B\,, 
 \\ \partial_\nu \tilde v+\beta \tilde v=\tilde w&\text{ on }\S,\end{cases}
 \]
 where $\tilde w\in L^\infty(\B)$ is non-negative with the hypothesis $w\preceq \tilde w=(\tilde w)^\sharp$. 

 \textbf{Proof of the comparison result for \eqref{Eq:theo2}.}

 Using a Talenti inequality on the sphere $\S$ (see \cite[Section 3]{zbMATH07384538}) it holds $w_g\preceq w_{g^\sharp}$. This in turn implies $u\preceq v$ thanks to the result we just proved (and the fact that $w_{g^\sharp}=(w_{g^\sharp})^\sharp$ again by an energetic argument). 
 
 \textbf{Proof of rigidity.} 
 
 We now assume that for some $p>1$ it holds $\Vert u\Vert_{L^p(\B)}=\Vert v\Vert_{L^p(\B)}$.  Using the ridigidy result from the previous item and Remark \ref{rk:loosen_rigid_compar} we get that up to the same rotation
 \[ u=u^\sharp=v\,, (f+m_1 u)=(f+m_1u)^{\sharp}=f^{\sharp}+m_1^{\sharp}u^\sharp\,, w_g=w_{g^{\sharp}}.\]
   In particular, this implies
  \[ f^\sharp-f=m_1u^\sharp-m_1^\sharp u=(m_1-m_1^\sharp)u^\sharp.\] Now, fixing $r\in (0,1)$ and $\theta\in (0,\th_{\max}^r)$, it holds
  \begin{align*}
   0&\leq \int_{K_r(\theta)}(f^\sharp-f),\text{ by definition}
   \\&= \int_{K_r(\theta)}(m_1-m_1^\sharp)u^\sharp&
   \\&\leq0,\text{ by the Hardy-Littlewood inequality \eqref{Eq:HL},}
  \end{align*} 
  so that we have equality $\int_{K_r(\theta)}f=\int_{K_r(\theta)}f^\sharp$. We claim that this gives that for any decreasing $\p:[0,\th_{\max}^r)\to\R$ (identifying $\p$ with the naturally associated function defined on $\mathbb S^{d-1}_r$ with constant value on each $\partial K_r(\th)$) we have 
\begin{equation}
    \label{eq:phi_rad_ffsharp}
\int_{\mathbb S^{d-1}_r} \p f=\int_{\mathbb S^{d-1}_r} \p f^\sharp.\end{equation}
Indeed, differentiating the previous equality in $\th$ we first deduce that for any $\theta\in (0,\th_{\max}^r)$
\[ \int_{\partial K_r(\theta)}f=\int_{\partial K_r(\theta)}f^\sharp\] and the co-area formula then yields 
\[ \int_{\mathbb S^{d-1}_r}\p f=\int_0^\infty \frac{t}{\p'(t)}\left(\int_{\partial K_r(\theta_t)}f\right)dt=\int_{\mathbb S^{d-1}_r}\p f^\sharp\] where for any $t\in\R$ the angle $\theta_t$ is chosen so that $\{\p=t\}=\partial K_r(\theta_t)$.
Picking any decreasing $\p$ and using the rigidity case of the Hardy-Littlewood inequality \eqref{Eq:HLR} on the just proven equality \eqref{eq:phi_rad_ffsharp}, we deduce $f=f^\sharp$. 
 This gives in turn  
\[ m_1^\sharp u^\sharp=m_1 u^\sharp.\]
As $u^\sharp>0$ by the strong maximum principle (unless $f=0$ and $w=0$, in which case there is nothing to prove) , we deduce
\[ m_1=m_1^\sharp.\] Noting that $w_{g^\sharp}=(w_{g^\sharp})^\sharp$ is symmetric (thanks to an analogous energetic argument as the one used for proving $v=v^\sharp$ at the beginning of the proof of Theorem \ref{theo:rigidity1}), the same reasoning leads to $ g=g^\sharp$ and $m_2=(m_2)_{\sharp}$, thus finishing the proof of the theorem.

 \end{proof}

\section{Spectral shape optimisation of the cooperative system}\label{sect:shapediff}
This section is dedicated to the study of the shape optimisation problem \eqref{Eq:Pv2}, meaning that we minimise \(
\lambda_{c_i,c_b}(\Om)\) while keeping $|\Om|$ fixed, $\O$
where $\Om$ is a Lipschitz bounded open set. Let us recall the variational formulation of $\lambda_{c_i,c_b}(\Om)$:
\begin{equation}
    \label{eq:lambda_variat}
    \lambda_{c_i,c_b}(\Om)=\min_{\substack{ (u,v)\in {W^{1,2}}(\Om)\times {W^{1,2}}(\Sigma)\\ (u,v)\ne(0,0)}}\mathcal{E}^\Om_{c_i,c_b}(u,v)\end{equation}
    where we denote
    \[\mathcal{E}^\Om_{c_i,c_b}(u,v):=
    \frac{ \displaystyle \int_\Omega|\nabla u|^2+c_i\int_\Omega u^2+\int_\Sigma |\nabla_\Sigma v|^2+\int_\Sigma (u-v)^2-c_b\int_\Sigma v^2}{\displaystyle \int_\Omega u^2+\int_\Sigma v^2}.
\]
 The Lipschitz character of $\Om$ is enough to ensure  existence and uniqueness (up to multiplication by a positive constant) of a minimiser $(u,v)$ to \eqref{eq:lambda_variat}, with $u>0$ in $\Om$ and $v>0$ in $\Sigma$. Throughout, the unique couple $(u,v)$ such that $\int_\Om u^2+\int_\Sigma v^2=1$ is called the associated eigencouple.

Alternatively, if $\Om$ is further assumed to be $\C^{2,\eta}$ for some $\eta\in(0,1)$, then $\lambda_{c_i,c_b}(\Omega)$ can be defined through Krein-Rutman Theorem as the principal eigenvalue of the system with unknowns $(u,v,\lambda)$:
\begin{equation}\label{eq:system_egv}
\begin{cases}
    -\Delta u=(\lambda-c_i)u & \text{in }\Omega,\\
    -\Delta_\Sigma v=(\lambda+c_b)v+u-v &\text{over }\Sigma,\\
    \partial_\nu u+u=v &\text{over }\Sigma,\\
    u>0 \text{ in }\Om,\ v>0 \text{ over }\Sigma,
\end{cases}
\end{equation}
(see \cite[Theorem 2.2]{BGT} and \cite[Proposition 3.3]{GMZ15}), in which case the so-called principal eigencouple $(u,v)$
belongs to $\C^{2,\eta}(\ov{\Om})\times\C^{2,\eta}(\Sigma)$. 
\subsection{Preliminary estimates}
We start by proving the following proposition, which provides useful bounds for the eigenvalue $\lambda_{c_i,c_b}$ in terms of the parameters $c_i$ and $c_b$.
\begin{proposition}\label{prop:lambda_cicb_est}
    Let $c_i,c_b\in\R$, and $\Om\subset\R^d$ be a  Lipschitz  bounded open set. Then
    \begin{itemize}
        \item If $-c_b< c_i$,
        \[
        -c_b< \lambda_{c_i,c_b}(\Om)<c_i.
        \]
        \item If $c_i<-c_b$, 
        \[
        c_i<\lambda_{c_i,c_b}(\Om)<-c_b.
        \]
    \end{itemize}
\end{proposition}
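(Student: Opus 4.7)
The plan is to derive a single clean algebraic identity relating $\lambda := \lambda_{c_i,c_b}(\Om)$, $c_i$, $c_b$, and the positivity of the eigencouple, and then to read off both estimates by a simple case analysis on signs.

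First I would work variationally (since $\Om$ is only assumed Lipschitz, so the pointwise eigensystem \eqref{eq:system_egv} need not hold classically). Let $(u,v)\in W^{1,2}(\Om)\times W^{1,2}(\Sigma)$ be the positive eigencouple associated with $\lambda$, normalised by $\int_\Om u^2+\int_\Sigma v^2=1$. The Euler--Lagrange equation reads: for every $(\phi,\psi)\in W^{1,2}(\Om)\times W^{1,2}(\Sigma)$,
\[
\int_\Om\nabla u\cdot\nabla\phi+c_i\int_\Om u\phi+\int_\Sigma\nabla_\Sigma v\cdot\nabla_\Sigma\psi+\int_\Sigma(u-v)(\phi-\psi)-c_b\int_\Sigma v\psi=\lambda\Bigl(\int_\Om u\phi+\int_\Sigma v\psi\Bigr).
\]
Testing with $(\phi,\psi)=(1,0)$ and $(\phi,\psi)=(0,1)$ gives, respectively,
\[
\int_\Sigma(u-v)=(\lambda-c_i)\int_\Om u\qquad\text{and}\qquad -\int_\Sigma(u-v)=(\lambda+c_b)\int_\Sigma v.
\]
Adding these yields the key identity
\[
(\lambda-c_i)\int_\Om u=-(\lambda+c_b)\int_\Sigma v.
\]

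The bounds then follow purely from the fact that $u>0$ in $\Om$ and $v>0$ on $\Sigma$, so that $A:=\int_\Om u>0$ and $B:=\int_\Sigma v>0$, and from the identity $(\lambda-c_i)A=-(\lambda+c_b)B$. In the regime $-c_b<c_i$: if $\lambda\geq c_i$, then $\lambda-c_i\geq 0$ but $\lambda+c_b\geq c_i+c_b>0$, making the left-hand side non-negative and the right-hand side strictly negative, a contradiction; symmetrically, if $\lambda\leq -c_b$, then the left-hand side is strictly negative while the right-hand side is non-negative, also impossible. This gives $-c_b<\lambda<c_i$. The case $c_i<-c_b$ is handled by the exact same dichotomy with the inequalities reversed, yielding $c_i<\lambda<-c_b$.

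I expect no real obstacle; the only subtle point is avoiding the classical PDE formulation, which is why I would insist on deriving the two scalar identities from the weak formulation directly. Once the identity $(\lambda-c_i)\int_\Om u=-(\lambda+c_b)\int_\Sigma v$ is in hand, the strict inequalities are immediate from the strict positivity of $u$ and $v$ (which is part of the stated properties of the principal eigencouple), and no additional regularity of $\Om$ is needed.
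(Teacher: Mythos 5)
Your proof is correct, and it takes a genuinely different route from the paper's. The paper argues directly from the Rayleigh quotient: for $-c_b<c_i$ it estimates $\mathcal{E}^\Om_{c_i,c_b}(u,v)\geq\frac{c_i\int u^2-c_b\int v^2}{\int u^2+\int v^2}\geq -c_b$ for every pair $(u,v)$, then makes this strict by analysing the equality case (forcing $u$, $v$ constant with $u|_\Sigma=v$, hence a contradiction), and separately gets the upper bound $\lambda<c_i$ by plugging in the constant test pair $(1,1)$, which gives $\lambda\leq\frac{c_i|\Om|-c_bP(\Om)}{|\Om|+P(\Om)}<c_i$. Your approach instead tests the weak Euler--Lagrange equation of the minimiser with $(\phi,\psi)=(1,0)$ and $(0,1)$ to obtain the two scalar identities $\int_\Sigma(u-v)=(\lambda-c_i)\int_\Om u$ and $-\int_\Sigma(u-v)=(\lambda+c_b)\int_\Sigma v$, whose sum yields $(\lambda-c_i)\int_\Om u=-(\lambda+c_b)\int_\Sigma v$; the strict positivity of the principal eigencouple then kills all four boundary cases by a sign argument. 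This buys a slightly more unified proof (both strict inequalities in both regimes come from a single identity plus positivity, no separate equality-case analysis), at the mild cost of relying on the weak formulation of the eigensystem rather than on the Rayleigh quotient alone. Both arguments use the same essential inputs — Lipschitz regularity for existence and strict positivity of the eigencouple — and both are sound; the paper's is arguably closer to ``first principles'' while yours isolates a reusable algebraic identity. One small stylistic point: it is worth stating explicitly that the constant test pairs $(1,0)$ and $(0,1)$ lie in $W^{1,2}(\Om)\times W^{1,2}(\Sigma)$ (true since $\Om$ is bounded), so the weak formulation applies to them.
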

\begin{proof}
    \textbf{Case $-c_b< c_i$.} 
    
    For any $(u,v)\in {W^{1,2}}(\Om)\times {W^{1,2}}(\Sigma)$ we have
    \begin{equation}\label{eq:E_CiCb_below}
        \mathcal{E}^\Om_{c_i,c_b}(u,v) \geq \frac{\displaystyle c_i\int_\Omega u^2-c_b\int_\Sigma v^2}{\displaystyle \int_\Omega u^2+\int_\Sigma v^2}\geq-c_b,
    \end{equation}
since $c_i>-c_b$, so that
    \begin{equation}\nonumber
        \lambda_{c_i,c_b}(\Omega)\geq -c_b.
    \end{equation}
   See now that equality does not occur, 
    as otherwise $\mathcal{E}^\Om_{c_i,c_b}(u,v)=-c_b$ for the couple of principal eigenfunctions $(u,v)$, and, following the chain of inequalities in \eqref{eq:E_CiCb_below} it would imply that $u$ and $v$ are constants with $u_{|\Sigma}=v\ne0$, thus obtaining
    \begin{equation}
        \label{eq:below_lambdaOm_strict}
    -c_b=\lambda_{c_i,c_b}(\Omega)=\frac{\displaystyle c_i\int_\Omega u^2-c_b\int_\Sigma v^2}{\displaystyle \int_\Omega u^2+\int_\Sigma v^2}>-c_b,
       \end{equation}
which is a contradiction.

To prove the upper bound, observe that by \eqref{eq:lambda_variat} one has 
    \[
        \lambda_{c_i,c_b}(\Omega)\leq \mathcal{E}^\Om_{c_i,c_b}(u_0,v_0),
    \]
    where we let $u_0$ and $v_0$ be constant functions equal to $1$, thus getting 
    \[
        \lambda_{c_i,c_b}(\Omega)\leq \frac{\displaystyle c_i\int_\Omega u_0^2-c_b\int_\Sigma v_0^2}{\displaystyle \int_\Omega u_0^2+\int_\Sigma v_0^2}<c_i,
    \]
    since $-c_b<c_i$.

    \textbf{Case $c_i<-c_b$.}
    
     This case is treated similarly. 
\end{proof}

\subsection{Counterexample to global existence (Proof of Theorem \ref{Th:NonExistence})}
In this short paragraph we show that in general, one cannot expect existence to hold for the minimisation of $\lambda_{c_i,c_b}$ at fixed volume. 
\begin{proof}[Proof of Theorem \ref{Th:NonExistence}]
    Using Proposition \ref{prop:lambda_cicb_est} we have
    \[
    \lambda_{c_i,c_b}(\Om)>-c_b
    \]
    for any Lipschitz bounded open set $\Om$.
    On the other hand, by definition of $\lambda_{c_i,c_b}$ (see \eqref{eq:lambda_variat}) one has 
    \[
        \lambda_{c_i,c_b}(\Omega)\leq \mathcal{E}^\Om_{c_i,c_b}(u_0,v_0),
    \]
    where we have let $u_0$ and $v_0$ be constant functions equal to $1$, thus getting 
    \[
        \lambda_{c_i,c_b}(\Omega)\leq \frac{c_i|\B|-c_bP(\Om)}{|\B|+P(\Omega)},
    \]
    with $P(\Om)$ denoting the perimeter of $\Om$.
   Choosing now a sequence $\Om_j$ of Lipschitz sets with constant volume $|\B|$ verifying $P(\Omega_j)\to+\infty$ (taking for instance a sequence of thinning rectangles), we obtain that 
    \[
        \inf_{\Omega\in\mathcal{O}_{\text{c}}}\lambda_{c_i,c_b}(\Omega)\leq -c_b,
    \]
     Note that the same inequality for the class $\mathcal{O}_{\text{s}}$ can be shown, by (for instance) regularising a sequence of thinning rectangles. \BBB
    Together with the reverse inequality shown above, we thus have 
    \[
        \inf_{\Omega\in\mathcal{O}_{\text{c}}}\lambda_{c_i,c_b}(\Omega)= \inf_{\Omega\in\mathcal{O}_{\text{s}}}\lambda_{c_i,c_b}(\Omega)= -c_b.
    \]
    
    Note that there cannot be any Lipschitz $\Omega$ achieving this infimum, using again the strict lower bound from Proposition \ref{prop:lambda_cicb_est}. This finishes the proof.

\end{proof}
 
\subsection{Second-order analysis of the ball} \label{sect:second_order}

Let $\Om\subset\R^d$ be a bounded open set with a $\C^{2,\eta}$  boundary $\Sigma:=\partial\Om$, for some $\eta\in(0,1)$. In this section we  perform a second-order analysis of the functional 
\begin{equation}
    \label{eq:egv_functional}
    \theta\in\C^{2,\eta}(\R^d,\R^d)\mapsto\lambda_{c_i,c_b}(\Om_\theta)\in\R,
\end{equation}
where we let $\Om_\theta:=(\text{Id}+\theta)(\Om)$ be the $\C^{2,\eta}$ deformation of $\Om$ by the field $\theta\in\C^{2,\eta}(\R^d,\R^d)$. More precisely, after computing the derivative of \eqref{eq:egv_functional} at any shape $\Om$ (Proposition \ref{prop:1st_shape}) and proving that the ball is a critical point under volume constraint (Corollary \ref{cor:critpt}), we compute the second-order shape derivative at the ball (in Propositions \ref{prop:second_shaped} and \ref{prop:diag_lag}), on which we rely in order to exhibit regimes for which the ball  satisfies (or not) second-order optimality conditions (Theorem \ref{thm:local_ball}).  Throughout, we will thus be working with competitors of the form \begin{equation}
    \label{eq:perturbations_class}
\left\{\Om_\theta, \ \theta\in \C^{2,\eta}(\R^d,\R^d), \|\theta\|_{\C^{2,\eta}(\R^d,\R^d)}\ll1\right\} 
\end{equation}
where $\eta\in(0,1)$ is  arbitrary (see Remark \ref{rk:locmin} for the link with the $\C^{2,\eta}$-local minimality of the ball). 

 Let us say that \cite{DKL16} is an important reference for this section, where the authors carry out an analogous second-order analysis for the Wentzell-Laplace problem, and prove that the ball verifies second-order optimality conditions \cite[Corollary 1.8]{DKL16}. 

\textbf{Notations.}

 We denote by $\nu_\Sigma$, (or more simply $\nu$ when no confusion is possible) the outer  normal vector to $\Sigma$, and by $\partial_\nu$, $\parnunu$ and so on, the successive derivatives in the direction $\nu_\Sigma$. For differential quantities associated to $\Sigma$ we add a $\Sigma$ index: $\nabsig$, $\text{div}_\Sigma$, $\Delta_\Sigma$ respectively denote the tangential gradient, divergence and Laplacian. 

In this section, we generally drop the dependence of the eigenvalue $\lambda$ on the parameters $c_i, c_b\in\R$ and write  $\lambda(\Om)$ for $\lambda_{c_i,c_b}(\Om)$. We define $\lambar$ and $\lamti$ as
\begin{equation}
    \label{eq:def_lambdatilde_bar}
\lambar(\Om)=\lambda(\Om)-c_i, \ \lamti(\Om):=\lambda(\Om)+c_b-1
\end{equation}
so that the system verified by the eigencouple $(u,v)$ associated to $\lambda(\Om)$ simply writes
\begin{equation}
    \label{eq:principalegf_syst}
\begin{cases}
    -\Delta u=\lambar u & \text{in }\Omega,\\
    -\Delta_{\Sigma} v=\lamti v+u &\text{over }\Sigma,\\
    \partial_{\nu} u+u=v &\text{over }\Sigma,\\
    u>0 \text{ in }\Om,\ v>0 \text{ over }\Sigma,\\
    \int_\Om u^2+\int_\Sigma v^2=1.
\end{cases}
\end{equation}

For any smooth path of $\C^{2,\eta}$ vector fields $t\mapsto X_t\in\C^{2,\eta}(\R^d,\R^d)$ verifying $X_0=\text{Id}$, we let 
\[
\Om_t:=X_t(\Om),\ \Sigma_t:=\partial\Om_t
\]
which is a bounded open Lipschitz set for $|t|\ll1$ (whenever $\|X_t-\text{Id}\|_{W^{1,\infty}(\R^d,\R^d)}<1$). Let us emphasise that these paths of deformations of $\Om$ encompass the central example
\[
\forall |t|\ll1, \ X_t:=\text{Id}+t\th,
\]
where $\th\in\C^{2,\eta}(\R^d,\R^d)$.

For any  small $t$, we let $(u_t,v_t)\in \C^{2,\eta}(\ov{\Om_t})\times \C^{2,\eta}(\Sigma_t)$ be the couple of principal eigenfunction of $\lambda_t:=\lambda(\Om_t)$, \textit{i.e.}
\begin{equation}
    \label{eq:ut_vt_system}
\begin{cases}
    -\Delta u_t=\lambar_t u_t & \text{in }\Omega_t,\\
    -\Delta_{\Sigma_t} v_t=\lamti_tv_t+u_t &\text{over }\Sigma_t,\\
    \partial_{\nu_t} u_t+u_t=v_t &\text{over }\Sigma_t,\\
    u_t>0 \text{ in }\Om_t,\ v_t>0 \text{ over }\Sigma_t,\\
     \int_{\Om_t} u_t^2+\int_{\Sigma_t} v_t^2=1.
\end{cases}
\end{equation}
with $\lambar_t:=\lambar(\Om_t)$ and $\lamti_t:=\lamti(\Om_t)$.  In order to differentiate \eqref{eq:egv_functional} and obtain the expressions of $\lambda_t'$ and $\lambda_t''$, we will differentiate the above system \eqref{eq:ut_vt_system}. As the natural functional space $\C^{2,\eta}(\ov{\Om_t})\times \C^{2,\eta}(\Sigma_t)$  varies with $t$, we first find a common functional space where we can carry out the differentiation of the maps $t\mapsto u_t$ and $t\mapsto v_t$. The same goes for the outer unit normal $\nu_t\in \C^{1,\eta}(\Sigma_t)$. This is done through particular choices of extensions of $u_t$, $v_t$, as well as of $\nu_t$, although we emphasise the fact that these choices do not appear in the final formulae of the derivatives (see below Propositions \ref{prop:1st_shape} and \ref{prop:diag_lag}).

\textbf{Shape calculus.}

Let us recall some standard notions of shape  calculus (for a detailed introduction we refer to \cite[Chapter 5]{HP18}). The main tool for differentiating shape functionals are the two Hadamard formulae:
\begin{equation}
    \label{eq:had_domain}
\frac{d}{dt}\left(\int_{\Om_t}f_t\right)_{|t=0}=\int_{\Om}f_0'+\int_\Sigma f_0\theta_\nu,
\end{equation}
\begin{equation}
    \label{eq:had_boundary}
\frac{d}{dt}\left(\int_{\Sigma_t}g_t\right)_{|t=0}=\int_{\Sigma}g_0'+\int_\Sigma \theta_\nu\left(\parnu g_0+Hg_0\right),
\end{equation}
where $f_t:\Om_t\to\R$, $g_t:\Sigma_t\to\R$ satisfy sufficient regularity and differentiability assumptions (see respectively \cite[Theorem 5.2.2]{HP18} and \cite[Proposition 5.4.18]{HP18}), with the notations 

\[
f_0':=\frac{d}{dt}(f_t)_{|t=0},\  g_0':=\frac{d}{dt}(g_t)_{|t=0},\ \theta:=X_0'=\frac{d}{dt}(X_t)_{t=0},
\]
$\theta_\nu:=\theta\cdot\nu$ is the normal component of $\theta$, and $H$ is the mean curvature of $\Sigma$.

As usual when performing differentiation of boundary integrals, it is convenient to extend the outer unit normal $\nu_\Sigma:\Sigma\to\R$ to the whole $\R^d$ by setting:
\begin{equation}
    \label{eq:ext_nu}
\overline{\nu}:=
    \chi\nabla b_{\Sigma},\\
\end{equation}
where $b_\Sigma$ is the signed distance function to $\Sigma$, and $\chi:\R^d\to\R$ is a smooth cut-off function with $\chi\equiv1$ in a neighborhood of $\Sigma$. Still denoting $\nu$ this extension, we have that (see for instance \cite[Appendix B.3]{CDK13} and \cite[Proposition 5.4.14]{HP18})
\begin{equation}
    \label{eq:Dnu_symm}
    D\nu=D^2b_\Sigma \text{ is symmetric and }D\nu\cdot\nu=0 \text{ on }\Sigma.
    \end{equation}
Denoting by $\nu_t$ the outer unit normal to $\Sigma_t$ (extended to $\R^d$), the shape derivative of $\nu_t$ has the simple form (see again \cite{CDK13,HP18})
\begin{equation}\label{eq:nu'}
\nu':=\frac{d}{dt}(\nu_t)_{|t=0}=-\nabsig\theta_\nu.
\end{equation}

\textbf{Shape differentiation of eigenelements.} 

To differentiate the eigenfunctions, we first extend them to $\R^d$ in the following way (see \cite[Section 5.5]{HP18}). First, the regularity of $\Om$ allows to find extension mappings $P^1_\Om$ and $P^2_\Om$
\[
u\in {W^{1,2}}(\Om)\mapsto P^1_\Om u\in {W^{1,2}}(\R^d),\]
\[v\in {W^{1,2}}(\Sigma)\mapsto P^2_\Om v\in {W^{1,2}}(\R^d),
\]
where in addition we impose that $P^2_\Om$ be built so that $P^2_\Om v:\R^d\to\R$ be constant along the normal $\nu$ in a neighbourhood of $\Sigma$.
We  then let, for each small $t$, $\phi_t$ be the $\C^{2,\eta}$ diffeomorphism
\[
\phi_t:=\text{Id}+t\theta,
\]
and set 
\begin{equation}
    \label{eq:def_ext_egf}
\begin{cases}
    \overline{u}_t:=P^1_\Om(u_t\circ\phi_t)\circ\phi_t^{-1}\in {\C^{2,\eta}}(\R^d),\\
     \overline{v}_t:=P^2_\Om(v_t\circ\phi_t)\circ\phi_t^{-1}\in {\C^{2,\eta}}(\R^d).
\end{cases}
\end{equation}
For the sake of notational simplicity we keep writing $u_t$ and $v_t$ for these extended eigenfunctions. The eigencouple and the eigenvalue are twice continuously differentiable, \textit{i.e.} the mapping
\[
t\mapsto (u_t,v_t, \lambda_t)\in \C^{0,\eta}(\R^d)\times
\C^{0,\eta}(\R^d)\times\R\]
 is  $\C^2$ around $t=0$ (the eigenelements are even $\C^2$ in the sense of Fréchet derivatives; although the method of proof is classical, we detail these considerations in Appendix \ref{Ap:Differentiability}).  We  denote by $(u_t',v_t',\lambda'_t)$ (respectively $(u_t'',v_t'',\lambda''_t)$) its first (respectively second) derivative. 

\textbf{Tangential differentiation.}

Let us recall the useful integration by parts formulae
    \begin{equation}
        \label{eq:ibp_tang1}
        \int_\Sigma \nabsig f\cdot\nabsig g=-\int_\Sigma g\Delta_\Sigma f,
    \end{equation}
    and 
    \begin{equation}
        \label{eq:ibp_tang2}
        \int_\Sigma \text{div}_\Sigma(W)f=-\int_\Sigma W\cdot \nabsig f,
    \end{equation}
    for a smooth enough tangential (\textit{i.e.} verifying $W_\nu:=W\cdot\nu=0$ over $\Sigma$) vector field $W:\R^d\to\R^d$ and sufficiently smooth functions $f,g:\R^d\to\R$ (see for instance \cite[Proposition 5.4.9]{HP18}).
We also remind the classical decomposition of the Laplacian on $\Sigma$ (see \cite[Proposition 5.4.12]{HP18}):
    \begin{equation}
        \label{eq:lap_tangential}
        \Delta u=\Delta_\Sigma u+H\parnu u +\parnunu u.
    \end{equation}
    
    \textbf{First-order derivative.}
    
In the next proposition we compute
\[
\frac{d}{dt}\lambda(\Om_t)_{|t=0},
\]
where $t\mapsto X_t\in \C^{2,\eta}(\R^d,\R^d)$ is a smooth path of $\C^{2,\eta}$ vector fields.

\begin{proposition}[First shape derivative]\label{prop:1st_shape}
    Let $\Om\subset\R^d$ be a bounded open set with  $\C^{2,\eta}$  boundary $\Sigma:=\partial\Om$ and principal eigencouple $(u,v)$ verifying \eqref{eq:principalegf_syst}. Let $t\mapsto X_t\in\C^{2,\eta}(\R^d,\R^d)$ be any smooth path of $\C^{2,\eta}$ vector fields verifying $X_0=\text{Id}$. Setting $\theta:=X_0':=\frac{d}{dt}(X_t)_{t=0}$, it holds
    \begin{align}\nonumber
        \frac{d}{dt}\Big(\lambda\left(X_t(\Om)\right)\Big)_{|t=0}= \int_\Sigma \theta_\nu\Big(|\nabla_\Sigma u|^2-(\partial_{\nu} u)^2-\langle B\nabla_\Sigma v,\nabla_\Sigma v\rangle\\-\lambar u^2+H(u^2-2uv-\lamti v^2)\Big), \label{eq:1st_shape_d_express}
    \end{align}
    where $B:=2D^2b-H\text{Id}$, $b:\R^d\to\R^d$ being the signed distance function to $\Sigma$ and $H$ its mean curvature.
\end{proposition}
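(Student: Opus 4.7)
The approach is based on a Lagrangian/envelope argument exploiting the symmetry of the bilinear form underlying the variational characterisation \eqref{eq:lambda_variat}. Introduce the symmetric bilinear forms
\[
a_\Om(u,v;\varphi,\psi) := \int_\Om(\nabla u\cdot\nabla\varphi+c_i u\varphi) + \int_\Sigma\bigl(\nabsig v\cdot\nabsig \psi+(u-v)(\varphi-\psi)-c_b v\psi\bigr),
\]
\[
b_\Om(u,v;\varphi,\psi) := \int_\Om u\varphi+\int_\Sigma v\psi,
\]
so that the normalised eigencouple $(u_t,v_t)$ on $\Om_t$ satisfies $a_{\Om_t}(u_t,v_t;\cdot,\cdot)=\lambda_t\, b_{\Om_t}(u_t,v_t;\cdot,\cdot)$ and $b_{\Om_t}(u_t,v_t;u_t,v_t)=1$, hence $\lambda_t=a_{\Om_t}(u_t,v_t;u_t,v_t)$. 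Differentiating this scalar identity at $t=0$ (the $\C^2$-differentiability of the eigenelements being justified in Appendix \ref{Ap:Differentiability}), the symmetry of $a$ together with the eigenvalue equation gives $2a(u_0',v_0';u,v)=2\lambda\, b(u_0',v_0';u,v)$, which when combined with the differentiated constraint $2b(u_0',v_0';u,v)=-\partial_t b_{\Om_t}(u,v;u,v)|_{t=0}$ yields the envelope identity
\[
\lambda'(0)=\frac{d}{dt}\bigg|_{t=0}\Bigl[a_{\Om_t}(u,v;u,v)-\lambda\, b_{\Om_t}(u,v;u,v)\Bigr].
\]
Here $(u,v)$ is kept frozen at its $t=0$ value (extended to $\R^d$ as in \eqref{eq:def_ext_egf}) and only the integration domains $\Om_t, \Sigma_t$ vary.

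The next step is to apply the Hadamard formulae \eqref{eq:had_domain}--\eqref{eq:had_boundary} to the resulting integrals. Regrouping the domain pieces produces $\frac{d}{dt}\int_{\Om_t}(|\nabla u|^2-\lambar u^2)|_{t=0}=\int_\Sigma\th_\nu(|\nabla u|^2-\lambar u^2)$, and decomposing $|\nabla u|^2=|\nabsig u|^2+(\parnu u)^2$ on $\Sigma$ accounts for three of the summands in \eqref{eq:1st_shape_d_express}. The boundary integrals are treated term by term, exploiting crucially the fact that the extension $P^2_\Om$ was chosen so that $\parnu v=0$ on $\Sigma$: this zeroes out the normal-derivative contribution of $v^2$, while for $(u-v)^2$ it leaves $\parnu[(u-v)^2]=2(u-v)\parnu u=-2(\parnu u)^2$ after inserting the Robin-type boundary condition $\parnu u+u=v$. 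The mean-curvature pieces $H(u-v)^2-(\lamti+1)Hv^2$ then expand to $H(u^2-2uv-\lamti v^2)$, as required.

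The delicate term, and the main obstacle of the argument, is $\int_{\Sigma_t}|\nabsigt v|^2$: its integrand $g_t:=|\nabla v|^2-(\nu_t\cdot\nabla v)^2$ genuinely depends on $t$ through the normal $\nu_t$. However, its pointwise shape derivative $g_0'=-2(\nu\cdot\nabla v)(\nu'\cdot\nabla v)$ vanishes on $\Sigma$ thanks to $\parnu v=0$, so only the domain-change piece $\int_\Sigma\th_\nu(\parnu g_0+Hg_0)$ survives. A direct expansion gives $\parnu g_0=2D^2 v(\nu,\nabla v)$, and the key geometric identity is obtained by tangentially differentiating $\nabla v\cdot\nu=0$ on $\Sigma$: for any tangent vector $T$,
\[
D^2 v(\nu,T)=-\nabla v\cdot(D\nu\cdot T).
\]
Specialising to $T=\nabsig v$ (itself tangent on $\Sigma$) and recalling $D\nu=D^2 b$ on $\Sigma$, this identity combines with the definition $B=2D^2 b-H\,\text{Id}$ to give $\parnu g_0+Hg_0=-\langle B\nabsig v,\nabsig v\rangle$ on $\Sigma$. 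Collecting all contributions then yields \eqref{eq:1st_shape_d_express}; the remaining pieces beyond this identification amount to direct applications of the Hadamard formulae and the boundary conditions.
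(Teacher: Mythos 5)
Your proof is correct and takes a genuinely different route from the paper's. The paper differentiates the strong form of the eigensystem to obtain explicit PDEs for $u'$ and $v'$ (following the template of \cite[Theorem 5.5.2]{HP18} and \cite[Proof of Proposition 5.1]{CDK13}), then multiplies by $u$ and $v$ respectively, integrates by parts, and carries out a fairly lengthy cancellation to arrive at \eqref{eq:1st_shape_d_express}. You instead exploit the self-adjoint variational structure through a Feynman--Hellmann/envelope argument: because $a_{\Om_t}$ and $b_{\Om_t}$ are symmetric and $(u,v)$ is the eigencouple, the implicit contribution $2a(u',v';u,v)$ is absorbed by the differentiated normalisation, so the first shape derivative reduces to the Hadamard derivative of the ``frozen'' Rayleigh quotient. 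This bypasses the equations for $u'$ and $v'$ entirely; the only non-routine computation is the one for $\int_{\Sigma_t}|\nabsigt v|^2$, which you handle correctly: the integrand's pointwise derivative vanishes because $\parnu v=0$ (by the choice of extension), and the geometric identity $D^2v(\nu,T)=-\nabla v\cdot(D\nu\cdot T)$ for tangent $T$ (from tangentially differentiating $\nabla v\cdot\nu=0$, using the symmetry of $D\nu=D^2b$) combines with $B=2D^2b-H\,\mathrm{Id}$ to give $\parnu g_0+Hg_0=-\langle B\nabsig v,\nabsig v\rangle$. I verified that all the terms recombine to give \eqref{eq:1st_shape_d_express}; in particular the $+(\parnu u)^2$ coming from $|\nabla u|^2=|\nabsig u|^2+(\parnu u)^2$ in the bulk piece is turned into $-(\parnu u)^2$ by the $-2(\parnu u)^2$ contributed by $\parnu[(u-v)^2]$, and the curvature pieces $H(u-v)^2-H(\lamti+1)v^2$ collapse to $H(u^2-2uv-\lamti v^2)$. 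Your approach is arguably shorter and more transparent, at the price of relegating more structure (symmetry, simplicity of the eigenvalue) to the variational framework; the paper's computation, while heavier, produces the systems for $u'$, $v'$ along the way, which it then reuses in Proposition \ref{prop:second_shaped} for the second-order derivative -- an advantage your route would forfeit.
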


\begin{remark}
    From  \eqref{eq:1st_shape_d_express}
 one  recovers the expression of the shape derivative of $\lambda$ at $\Om$, given by the Fréchet derivative at $0$ of 
 \[\th\in\C^{2,\eta}(\R^d,\R^d)\mapsto \lambda((\text{Id}+\th)(\Om)).
 \]
 
 In fact, taking $X_t:=\text{Id}+t\th$, the shape derivative of $\lambda$ at $\Om$ in the direction $\th$ writes
 
\begin{equation}
    \label{eq:def_shape_d}
\lambda'(\Om)\cdot(\th)=
        \frac{d}{dt}\Big(\lambda\left(X_t(\Om)\right)\Big)_{|t=0}, 
    \end{equation}
    which is thus also given by expression \eqref{eq:1st_shape_d_express}.
 \end{remark}

\begin{proof}[Proof of Proposition \ref{prop:1st_shape}]
    We start by differentiating the system \eqref{eq:ut_vt_system}, where $(u_t,v_t,\lambda_t)$ still denote the eigenelements on $\Om_t=X_t(\Om)$. First, using  \cite[Theorem 5.5.2]{HP18}, we have
    \begin{equation}
        \label{eq:syst_u'}
    \begin{cases}
        -\Delta u'=\overline{\lambda}u'+\lambda' u, & \text{in }\Om\\
        \partial_\nu u'=\theta_\nu\big(\parnu(v-u)-\parnunu u\big)+\nabsig u\cdot\nabsig \theta_\nu+(v'-u') & \text{over }\Sigma,
    \end{cases}
    \end{equation}
    where $(u',v',\lambda')$ denotes the derivative $\frac{d}{dt}(u_t,v_t,\lambda_t)_{t=0}$ (note that in comparison with \cite[Theorem 5.5.2]{HP18} there is an additional term due to the differentiation of the boundary datum $v_t-u_t$, which is handled through the Hadamard  formula \eqref{eq:had_boundary}). We  simply note that this is obtained through the differentiation of \eqref{eq:ut_vt_system} written variationally, relying on the Hadamard formulae \eqref{eq:had_domain} and \eqref{eq:had_boundary}.
    
    As for the differentiation of the equation verified by $v_t$, we have
    \begin{align}
        \nonumber
        -\Delta_\Sigma v'-\Delta_\Sigma(\theta_\nu\partial_\nu v)&+\text{div}_\Sigma(\theta_\nu B\nabsig v)
        =\\
        \lamti v'+u'  &+\lambda' v+\theta_\nu\big(\partial_\nu(\tilde{\lambda}v+u)+H(\lamti v+u)\big)  \qquad \text{ over }\Sigma,\label{eq:v'eq}
    \end{align}
    for which we refer to \cite[Proof of Proposition 5.1]{CDK13}. More precisely, picking a test function $\varphi\in {W^{1,2}}(\Om)$ (extended to $\varphi\in {W^{1,2}}(\R^d)$), applying \eqref{eq:had_boundary} one finds 
        \begin{align}
    \nonumber
        \frac{d}{dt}\left(\int_{\Sigma_t}\nabsigt v_t\nabsigt \varphi\right)_{|t=0}=\int_\Sigma&\left(-\Delta_\Sigma v'-\Delta_\Sigma(\theta_\nu\partial_\nu v)+\text{div}_\Sigma(\theta_\nu B\nabsig v)\right)\varphi \\
         & -\int_\Sigma \left(\th_\nu\Delta_\Sigma v\right)\parnu\varphi,
        \label{eq:nabv_nabphi_diff}
    \end{align}
    while on the other hand
    \begin{align*}
    \frac{d}{dt}\left(\int_{\Sigma_t} \left(\lamti_t v_t+u_t\right)\varphi\right)_{|t=0}=\int_\Sigma&\left(\lamti v'+u'+\lambda' v+\theta_\nu\big(\partial_\nu(\tilde{\lambda}v+u)+H(\lamti v+u)\big)\right) \varphi\\
    &+\int_\Sigma\th_\nu \left(\lamti v+u\right)\parnu \varphi.
    \end{align*}
     Recalling that $-\Delta_\Sigma v=\lamti v+u$, we thus deduce \eqref{eq:v'eq}.
        
    Multiplying by $u$ the equation verified by $u'$ in \eqref{eq:syst_u'} we have
    \begin{equation*}
    -\int_\Om u\Delta u'=\overline{\lambda}\int_\Om u'u+\lambda'\int_\Om u^2.
    \end{equation*}
    Integrating by parts and using that $-\Delta u=\overline{\lambda}u$ inside $\Om$ one has
    \[
    -\int_\Om u\Delta u'=\overline{\lambda}\int_\Om u'u+\int_\Sigma(u'\parnu u-\parnu u'u),
    \]
    thus providing
    \begin{equation*}
        \lambda'\int_\Om u^2=\int_\Sigma(u'\parnu u-\parnu u'u).
    \end{equation*}
    We rewrite the previous using the expressions for $\parnu u$ and $\parnu u'$ from \eqref{eq:principalegf_syst} and \eqref{eq:syst_u'}, which gives
    \begin{equation}
        \label{lam'intu2}
        \lambda'\int_\Om u^2=\int_\Sigma u'(v-u)-u\Bigl(\theta_\nu \bigl( \parnu(v-u)-\parnunu u\bigr) +\nabsig \theta_\nu\nabsig u+(v'-u') \Bigr).
    \end{equation}
    Let us now multiply by $v$ the equation verified by $v'$ and integrate by parts the left-hand-side of \eqref{eq:v'eq}. Using \eqref{eq:ibp_tang1}, \eqref{eq:ibp_tang2} gives    \begin{align*}
        \int_\Sigma\Big(-\Delta_\Sigma v'-\Delta_\Sigma(\theta_\nu\partial_\nu v)&+\text{div}_\Sigma(\theta_\nu B\nabsig v)\Big)v \\&= \int_\Sigma \Big((v'+\theta_\nu\parnu v)(\lamti v+u)-\theta_\nu \langle B\nabsig v,\nabsig v\rangle \Big)
    \end{align*}
    where we also used that $-\Delta_\Sigma v=\lamti v+u$ over $\Sigma$ and the fact that 
    \[B\nabsig v\cdot\nu=\nabsig v\cdot B\nu=0 \text{ over }\Sigma,\] since $B$ is symmetric and $\nu$ is tangential over $\Sigma$ (see \eqref{eq:Dnu_symm}). We thus obtain
    \begin{align}
    \nonumber
        \int_\Sigma\Big(v'u+\theta_\nu u\parnu v-\theta_\nu \langle B\nabsig v,\nabsig v\rangle\Big)&=\int_\Sigma \Big(\lambda'v^2+u'v\\&+\theta_\nu v\left(\parnu u+H(\lamti v+u)\right)\Big).
        \label{eq:interm_v'}
    \end{align}
Summing \eqref{lam'intu2}--\eqref{eq:interm_v'} yields
    \begin{align*}
        \lambda'+&\int_\Sigma\Big(u'v+\theta_\nu v\big(\parnu u+H(\lamti v+u)\big)\Big)=\int_\Sigma \Big(v'u+\theta_\nu u\parnu v-\theta_\nu \langle B\nabsig v,\nabsig v\rangle\Big)\\
        &+\int_\Sigma \biggl( u'(v-u)-u \Bigl( \theta_\nu \bigl( \parnu(v-u)-\parnunu u \bigr) +\nabsig \theta_\nu\nabsig u+(v'-u') \Bigr) \biggr).
    \end{align*}
    Simplifying the terms $\int_\Sigma u'v$, $\int_\Sigma u'u$, $\int_\Sigma uv'$ and $\int_\Sigma \theta_\nu u\parnu v$ we get
    \begin{align}
        \nonumber
        \lambda'+\int_\Sigma\Big(\theta_\nu v\big(\parnu u+H(\lamti v+u)\big)\Big)&=-\int_\Sigma \theta_\nu \langle B\nabsig v,\nabsig v\rangle\\
        &+\int_\Sigma \biggl( u\Bigl( \theta_\nu(\parnu u+\parnunu u)-\nabsig \theta_\nu\nabsig u \Bigr) \bigg),
        \label{eq:interm_lambda'}
    \end{align}
    We now integrate by parts the term $\int_\Sigma u \nabsig \theta_\nu\cdot\nabsig u$ using \eqref{eq:ibp_tang2}:
    \begin{align*}
        \int_\Sigma u \nabsig \theta_\nu\cdot \nabsig u &=-\int_\Sigma \theta_\nu\text{div}_{\Sigma}(u\nabsig u)\\
        &=-\int_\Sigma \theta_\nu\big(u\Delta_\Sigma u+|\nabsig u|^2\big)\\
        &= \int_\Sigma \theta_\nu\Big(u\big(\overline{\lambda}u+H\parnu u+\parnunu u\big)-|\nabsig u|^2\Big)
    \end{align*}
    where we used \eqref{eq:lap_tangential} in the fourth line. We thus rewrite the right hand-side of \eqref{eq:interm_lambda'}
    \begin{align*}
        &\int_\Sigma \theta_\nu \biggl( - \langle B\nabsig v,\nabsig v\rangle+ u(\parnu u+\parnunu u)-\Bigl(  u\bigl(  \overline{\lambda}u+H\parnu u+\parnunu u \bigr)-|\nabsig u|^2 \Bigr) \biggr)\\
        &=\int_\Sigma \theta_\nu \biggl(  - \langle B\nabsig v,\nabsig v\rangle+ u\parnu u- \Bigl( u\bigl( \overline{\lambda}u+H\parnu u\bigr)-\abs{\nabsig u}^2 \Bigr) \biggr),
    \end{align*}
    Recalling that $\parnu u=v-u$ over $\Sigma$, we finally get 
    \[
        \lambda'=\int_\Sigma \theta_\nu\Big( \abs{\nabsig u}^2-(\parnu u)^2- \langle B\nabsig v,\nabsig v\rangle-u^2\overline{\lambda}+H\big(u^2-2uv-\lamti v^2\big)\Big),
    \]
    which is the announced expression for $\lambda'$, thus finishing the proof.

\end{proof}

From the expression of the first order shape derivative $\lambda'(\B)$ given by \eqref{eq:1st_shape_d_express} (thanks also to the remark made in \eqref{eq:def_shape_d}), one quickly deduces that the ball is a critical point of $\lambda$ (under volume constraint), as is stated in next result. Denoting by $\mathrm{Vol}$ the volume functional, recall that we write $\mathrm{Vol}'$ its first shape derivative, which is classicaly given by (see for instance \cite[Theorem 5.2.2]{HP18}, or alternatively apply the Hadamard formula \eqref{eq:had_domain}):
\[
\forall \theta\in \C^{2,\eta}(\R^d,\R^d), \mathrm{Vol}'(\Om)\cdot(\theta)=\int_{\Sigma}\theta_\nu
\]

\begin{corollary}[The ball is a critical point]\label{cor:critpt}
 The principal eigencouple $(u,v)$ of the ball (verifying \eqref{eq:principalegf_syst} for $\Om=\B$) is such that
    \[\begin{cases}
        u \text{ is radial in } \B,\\ 
        v \text{ is constant on } \S,
    \end{cases}\]
    and we have, on $\S$,
    \[
    \begin{cases}
        u=-\lamti v,\\
        \parnu u=v(\lamti+1).
    \end{cases}
    \]
    This implies that the ball $\B$ is a critical point of $\lambda$ under volume constraint, meaning that
    \[ \forall \theta\in \C^{2,\eta}(\R^d,\R^d)\,, \int_{\S}\theta_\nu=0\Rightarrow \lambda'(\B)\cdot (\theta)=0\]
    or equivalently, there exists a Lagrange multiplier $\mu\in\R$ such that 
    \[\lambda'(\B)=\mu Vol'(\B).\]
        \end{corollary}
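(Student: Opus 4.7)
The plan is to establish the three claims in succession, each being forced by the previous one; the driving idea is that the radial symmetry of $\B$, combined with uniqueness of the principal eigencouple, will collapse almost every term in the shape derivative formula \eqref{eq:1st_shape_d_express}.

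First, I would obtain the symmetry of $(u,v)$ via a uniqueness argument. Recall that the principal eigencouple solving \eqref{eq:principalegf_syst} is unique up to normalisation, as is built into the variational characterisation \eqref{eq:lambda_variat} together with the strict positivity statement that immediately follows. For any rotation $R\in O(d)$, the couple $(u\circ R,v\circ R)$ still solves \eqref{eq:principalegf_syst} on $\B$ with both components positive and with the same normalisation $\int_\B u^2+\int_\S v^2 = 1$. Uniqueness forces $u\circ R=u$ and $v\circ R=v$ for every $R$, whence $u$ is radial on $\B$ and $v$ is constant on $\S$. (Alternatively, one may invoke the rigidity case of Proposition \ref{Pr:FK}.)

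Second, the two boundary identities would be read directly off the system \eqref{eq:principalegf_syst}. Since $v$ is constant on $\S$, we have $\Delta_\Sigma v=0$, so the surface equation $-\Delta_\Sigma v=\lamti v+u$ reduces to $u=-\lamti v$ on $\S$. Substituting into the Robin-type condition $\parnu u+u=v$ yields $\parnu u=v-u=v(1+\lamti)$.

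Third, I would insert these simplifications into \eqref{eq:1st_shape_d_express} evaluated at $\Om=\B$. Radiality of $u$ gives $\nabsig u\equiv 0$ on $\S$; constancy of $v$ on $\S$ gives $\nabsig v\equiv 0$; the mean curvature satisfies $H\equiv d-1$; and the traces $u$, $v$, $\parnu u$ are constants on $\S$ by the previous step. Hence the integrand in \eqref{eq:1st_shape_d_express} reduces to the $\theta$-independent constant
\[
\mu := -(\parnu u)^2 -\lambar u^2 + (d-1)\bigl(u^2-2uv-\lamti v^2\bigr),
\]
so that
\[
\lambda'(\B)\cdot(\theta) = \mu\int_\S \theta_\nu = \mu\,\Vol'(\B)\cdot(\theta).
\]
In particular $\lambda'(\B)\cdot(\theta)=0$ whenever $\int_\S\theta_\nu=0$, which is exactly the critical point condition. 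I do not anticipate any serious obstacle: the whole argument relies on the classical fact that on a maximally symmetric domain the unique principal eigencouple must share that symmetry, after which the conclusion is an algebraic simplification of a formula already established.
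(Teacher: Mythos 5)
Your proposal is correct and follows essentially the same route as the paper: rotation invariance of the system plus uniqueness of the normalised eigencouple gives radiality of $u$ and constancy of $v$, after which the shape-derivative formula \eqref{eq:1st_shape_d_express} collapses to $\mu\int_\S\theta_\nu$ and the boundary identities fall out of the system \eqref{eq:principalegf_syst}. The only minor blemish is the parenthetical suggestion to ``invoke the rigidity case of Proposition~\ref{Pr:FK}'': that proposition as stated in the paper is an inequality for $\Lambda_\B$ without an accompanying rigidity statement, so the aside would need justification; the uniqueness argument you actually use is the correct one.
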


\begin{proof}
    As $(u,v)$ is a principal eigencouple then by definition
    \[\lambda(\B)=\mathcal{E}^\B_{c_i,c_b}(u,v),\]
    (see \eqref{eq:lambda_variat}). Let $R$ be any rotation $R:\R^d\to\R^d$. Setting $u_R:=u(R\cdot)$ and $v_R:=v(R\cdot)$, one notices that $\mathcal{E}^\B_{c_i,c_b}(u_R,v_R)=\mathcal{E}^\B_{c_i,c_b}(u,v)$. Since we also have $\|u_R\|_{L^2(\B)}=\|u\|_{L^2(\B)}$ and $\|v_R\|_{L^2(\S)}=\|v\|_{L^2(\S)}$, then by uniqueness of a normalized solution to \eqref{eq:principalegf_syst} we deduce 
    \[\begin{cases}
        u_R=u,\\
        v_R=v.
    \end{cases}\]
As a consequence $u$ is radial on $\B$ and $v$ is constant on $\S$. Plugging this piece of information into the expression \eqref{eq:1st_shape_d_express}, there exists a constant $\mu\in\R$ such that 
\[\forall \theta\in \C^{2,\eta}(\R^d,\R^d),\  
\lambda'(\B)\cdot(\theta)=\mu\int_{\S} \theta_\nu.
\]
Let us finally express $u$ in terms of $v$: using the boundary condition for $u$ and the equation verified by $v$ in \eqref{eq:system_egv} we have 
    \[\begin{cases}
    0= \lamti v+u &\text{on } \S,\\
    \parnu u+u=v &\text{on }\S,
    \end{cases}
\]
providing the announced expressions on $\S$
\[
    \begin{cases}
        u=-\lamti v,\\
        \parnu u=v(\lamti+1).
    \end{cases}
    \]
\end{proof}

\textbf{Second-order derivative.}

From now on, the analysis focuses on the ball $\B$. We first compute the second-shape derivative of $\lambda$ at the ball. Following \cite{DKL16}, in next result we perform the computations along what we will call second-order volume-preserving paths, \textit{i.e.} smooth paths $t\mapsto X_t\in\C^{2,\eta}(\R^d,\R^d)$ of $\C^{2,\eta}$ vector fields verifying 
\[
\frac{d}{dt}\Big(\mathrm{Vol}(X_t(\B))\Big)_{|t=0}=0 \text{ and } \frac{d^2}{dt^2}\Big(\mathrm{Vol}(X_t(\B))\Big)_{|t=0}=0.
\]
Using the Hadamard  formulae \eqref{eq:had_domain} and \eqref{eq:had_boundary}, these conditions rewrite
\begin{equation}
    \label{eq:1st_2nd_vol_preserv}
\int_\S\th_\nu=0 \text{ and }
\int_\S\left(\frac{d}{dt}\left(X_t'\cdot\nu_{X_t}\right)_{t=0}+\th_\nu\left(\parnu \th_\nu+H\th_\nu\right)\right)=0,
\end{equation}
where $\nu_{X_t}$ is the outer unit normal of $X_t(\B)$ and we have set $\theta:=X_0'=\frac{d}{dt}(X_t)_{|t=0}$. In the following proposition we derive the expression of $\frac{d^2}{dt^2}\lambda(X_t(\B))_{|t=0}$ for such paths. 

\begin{proposition}[Second shape derivative at the ball]\label{prop:second_shaped}
Let $t\mapsto X_t\in \C^{2,\eta}(\R^d,\R^d)$ be a smooth path of $\C^{2,\eta}$ vector fields preserving volume at second-order, meaning that 
\[
\frac{d}{dt}\Big(\mathrm{Vol}(X_t(\B))\Big)_{|t=0}=\frac{d^2}{dt^2}\Big(\mathrm{Vol}(X_t(\B))\Big)_{|t=0}=0.
\]
Then, denoting $\theta:=\frac{d}{dt}(X_t)_{|t=0}=X_0'$, we have
\[
    \frac{d^2}{dt^2}\Big(\lambda\left(X_t(\B)\right)\Big)_{|t=0}=\int_{\S}\alpha \theta_\nu^2+\int_{\S}\beta|\nabla_\Sigma \theta_\nu|^2+\int_{\S} \theta_\nu\Big(\gamma v'+\delta u'\Big),
\]
where \[
\begin{cases}
    \alpha:=\parnu u\left(-2(H-1)\parnu u+u(H-2\lambar) \right),\\
    \beta:=-u(\parnu u),\\
\gamma:=-2(\parnu u),\\
\delta:=-2\left((H-1)\parnu u+\lambar u\right).
\end{cases}
\] 
and we have set $(u',v'):=\frac{d}{dt}(u_t,v_t)_{t=0}$ with $(u_t,v_t)$ a principal eigencouple of $X_t(\B)$ (verifying \eqref{eq:ut_vt_system}), and $(u,v):=(u_0,v_0)$.
\end{proposition}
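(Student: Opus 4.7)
The strategy is to differentiate the first shape derivative formula \eqref{eq:1st_shape_d_express} of Proposition \ref{prop:1st_shape} once more in $t$ along the path $t \mapsto X_t(\B)$, and then to exploit the symmetries of the eigencouple at the ball provided by Corollary \ref{cor:critpt}. For each small $t$, Proposition \ref{prop:1st_shape} lets us write
\[
\frac{d}{dt}\lambda(X_t(\B)) = \int_{\Sigma_t} (X_t'\cdot \nu_t)\, F_t,
\]
where $F_t:\Sigma_t\to\R$ denotes the boundary integrand appearing in \eqref{eq:1st_shape_d_express} evaluated at $(\Omega_t,u_t,v_t)$. I would then differentiate this boundary integral at $t=0$ using the Hadamard formula \eqref{eq:had_boundary}. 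Three kinds of contributions arise: (i) a velocity term carrying $\frac{d}{dt}(X_t'\cdot \nu_t)_{|t=0}$, which involves the acceleration $X_0''$ as well as $\nu'=-\nabsig\theta_\nu$ from \eqref{eq:nu'}; (ii) the shape derivative $F_0'$ of the integrand, which pulls in $u'$, $v'$, $\bar\lambda'$, $\lamti'$, and the shape derivatives of $H$, $B$ and $\nu$; (iii) the purely geometric boundary contribution $\theta_\nu\bigl(\partial_\nu(\theta_\nu F_0)+H\theta_\nu F_0\bigr)$.

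The key simplifications come from Corollary \ref{cor:critpt}: since $u$ is radial and $v$ is constant on $\S$, one has $\nabsig u\equiv \nabsig v\equiv 0$, so both $|\nabsig u|^2$ and $\langle B\nabsig v,\nabsig v\rangle$ drop from $F_0$; moreover $u, v, H, \parnu u, \bar\lambda, \lamti$ are constant on $\S$, hence $F_0$ itself is a constant. Crucially, the second-order volume-preservation assumption, rewritten via \eqref{eq:1st_2nd_vol_preserv}, states exactly that the combination $\int_\S\frac{d}{dt}(X_t'\cdot\nu_t)_{|t=0} + \int_\S\theta_\nu(\parnu\theta_\nu + H\theta_\nu)$ vanishes. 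Consequently every contribution in the second derivative that multiplies the constant $F_0$ against either the velocity term or the pure-geometry term collapses to zero, and in particular the acceleration $X_0''$ disappears from the final formula, as it should since $\B$ is a critical point of the Lagrangian $\Lm_{c_i,c_b,\mu}$.

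It then remains to simplify the contribution of $F'_0$. The idea is to differentiate each term of $F_0$ at $t=0$ using the standard shape-derivative formulae for $\nu_t$ (\eqref{eq:nu'}), for $H_t$ and $B_t$, together with the systems \eqref{eq:syst_u'} and \eqref{eq:v'eq} verified by $u'$ and $v'$. Integrating by parts tangentially via \eqref{eq:ibp_tang1}--\eqref{eq:ibp_tang2} to transfer derivatives off $u'$ and $v'$, and using the identities $u=-\lamti v$, $\parnu u=(\lamti+1)v$ on $\S$ (again from Corollary \ref{cor:critpt}) to re-express constants, the surviving integrals reorganise into exactly four pieces: a pointwise $\theta_\nu^2$ term, a $|\nabsig\theta_\nu|^2$ term produced by the integration by parts against $\nabsig u$ and $\nabsig v$ (whose derivatives are not zero even though $u,v$ are radial on $\S$), and two linear couplings in $\theta_\nu u'$ and $\theta_\nu v'$. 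Identifying coefficients yields $\alpha,\beta,\gamma,\delta$ as stated.

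The main technical obstacle lies in step (ii): although the symmetries at the ball trivialise many factors, the \emph{derivatives} of geometric quantities ($H_t$, $B_t$, $\nu_t$) and of the eigenelements remain non-trivial and must be tracked carefully through the tangential decomposition \eqref{eq:lap_tangential} and the symmetry \eqref{eq:Dnu_symm}. Another delicate point is checking that the various extensions off $\Sigma_t$ used when applying the Hadamard formula do not affect the result; this is guaranteed by the intrinsic nature of the shape derivative and can be verified by noting that all extension-dependent pieces either multiply $F_0$ (and are eliminated by the volume-preservation argument above) or combine into purely tangential expressions.
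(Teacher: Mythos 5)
Your plan is a genuinely different route from the paper's. The paper does \emph{not} differentiate the closed-form first-derivative expression \eqref{eq:1st_shape_d_express}; instead it differentiates the systems \eqref{eq:syst_u'} and \eqref{eq:v'eq} once more, tests the resulting equations for $u''$ and $v''$ against $u$ and $v$ respectively, integrates by parts, and sums. Your ``Hessian from the gradient'' strategy is legitimate and would give the same formula, but it trades one set of technicalities for another: you must know the shape derivative of the mean curvature $H_t$ (and potentially $B_t$, $\nu_t$) explicitly, while the paper's approach keeps curvature derivatives implicit by working with the weak formulations.

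However, there is a concrete error in your sketch that would derail the computation if taken at face value. You claim the $\beta\,|\nabsig\theta_\nu|^2$ term is ``produced by the integration by parts against $\nabsig u$ and $\nabsig v$ (whose derivatives are not zero even though $u,v$ are radial on $\S$).'' This is incorrect: the terms $|\nabsig u|^2$ and $\langle B\nabsig v,\nabsig v\rangle$ in $F_0$ are \emph{quadratic} in $\nabsig u$, $\nabsig v$, which vanish on $\S$ by Corollary \ref{cor:critpt}, so by the product rule their shape derivatives at $t=0$ vanish identically -- they contribute nothing, regardless of the fact that $(\nabsig_{\Sigma_t} u_t)'\ne 0$. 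The $|\nabsig\theta_\nu|^2$ term actually arises from the shape derivative of the mean curvature, namely from the $-\Delta_\Sigma\theta_\nu$ piece of
\[
H' \;=\; -\Delta_\Sigma\theta_\nu - (d-1)\,\theta_\nu ,
\]
multiplying the constant $u^2-2uv-\lamti v^2$ (which equals $\beta = -u\,\parnu u$ on $\S$ thanks to the identities $u=-\lamti v$ and $\parnu u=(1+\lamti)v$), followed by integration by parts of $-\Delta_\Sigma\theta_\nu$ against $\theta_\nu$. This is an ingredient your proposal doesn't mention needing, so the sketch as written would leave you hunting for a term from the wrong place. The remaining structure (elimination of the acceleration term through second-order volume preservation, reorganisation into $\theta_\nu^2$, $\theta_\nu u'$, $\theta_\nu v'$ pieces, use of Corollary \ref{cor:critpt}) is sound and does reproduce $\alpha,\gamma,\delta$ correctly.
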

\begin{proof}
    Let us set $\B_t:=X_t(\B)$, $\Sigma_t:=\partial\B_t$.
    We first differentiate the system verified by $u_t',v_t'$ respectively verifying (see \eqref{eq:syst_u'} and \eqref{eq:v'eq}): 
    \begin{align}
    \begin{cases}
        -\Delta u_t'=\overline{\lambda_t}u_t'+\lambda_t' u_t & \text{in }\B_t,\\
        \partial_{\nu_t} u_t'=\th^t_{\nu_t}\big(\parnut(v_t-u_t)-\parnunut u_t\big)+\nabsigt u_t\nabsigt \th^t_{\nu_t}+(v_t'-u_t') & \text{over }\Sigma_t,
    \end{cases}
    \label{eq:u_t'}
    \end{align}
    and 
    \begin{align}\nonumber
        -\Delta_{\Sigma_t} v_t'-\Delta_{\Sigma_t}(\th^t_{\nu_t}\parnut v_t)+&\text{div}_{\Sigma_t}(\th^t_{\nu_t} B_t\nabsigt v_t)
        =
        \lamti_t v_t'+u_t'\\&+\lambda_t' v_t+\th^t_{\nu_t}\big(\parnut(\tilde{\lambda_t}v_t+u_t)+H_t(\lamti_t v_t+u_t)\big)  \qquad \text{ over }\Sigma_t,
        \label{eq:LHSSecondderiv}
    \end{align}
      where we have set $\th^t:=X_t'$ (and thus $\th^t_{\nu_t}:=X_t'\cdot\nu_t$), $B_t:=2D^2b_t-H_t\text{Id}$ with $b_t:\R^d\to\R^d$ denoting the signed distance function to $\Sigma_t$ and $H_t$ its mean curvature.
In the proof, the notations will omit the references to $t=0$, meaning that rather than writing $u_0'$, $u_0''$ we will write more simply $u'$, $u''$, and so on. 

\textbf{Differentiation of $v_t'$.} 

On the other hand, we differentiate the equation verified by $v_t'$ using the computations from \cite[Proof of Theorem 3.5]{DKL16}. Taking $v:\S\to \R$ as a test function (extended to $\R^d$ by \eqref{eq:def_ext_egf}) it holds (see \eqref{eq:nabv_nabphi_diff}):

\begin{align*}
    \frac{d}{dt}\left(\int_{\Sigma_t}\nabsigt v_t'\nabsigt v\right)_{t=0}=\int_\S &\Big(\nabsig v''+\nabsig(\th_\nu\parnu v')-\th_\nu (B\nabsig v')\Big)\cdot\nabsig v\\&  -\int_\Sigma \left(\th_\nu\Delta_\Sigma v'\right)\parnu v.\end{align*}
Recalling that $v$ is constant on $\S$ (thanks to Corollary \ref{cor:critpt}) we have $\nabsig v=0$, while the choice of extension of $v$ (see \eqref{eq:def_ext_egf}) ensures that $\parnu v=0$, thus yielding
\[\frac{d}{dt}\left(\int_{\Sigma_t}\nabsigt v_t'\nabsigt v\right)_{t=0}=0.\]
With the same kind of considerations we likewise get 
\[
\frac{d}{dt}\left(\int_{\Sigma_t}\nabsigt (\th^t_{\nu_t}\parnut v_t)\nabsigt v\right)_{t=0}=0
\]
and
\[
\frac{d}{dt} \left(\int_{\Sigma_t}\th^t_{\nu_t}\langle B_t\nabsigt v_t,\nabsigt v\rangle\right)_{t=0}=0.
\]
After multiplying \eqref{eq:LHSSecondderiv} by $v$ and integrating by parts, we have
\begin{equation}
    \label{eq:beforedif_vt'}
\int_{\Sigma_t}\Big(\nabsigt v_t'+\nabsigt (\th^t_{\nu_t}\parnut v_t)-\th^t_{\nu_t} B_t\nabsigt v_t\Big)\cdot \nabsigt v= \int_{\Sigma_t}\tilde{g}_tv
\end{equation}
where $\tilde{g}_t:=\lamti_t v_t'+u_t'+\lambda_t' v_t+\th^t_{\nu_t}\big(\parnut(\tilde{\lambda_t}v_t+u_t)+H_t(\lamti_t v_t+u_t)\big)$. As a consequence of the three derivatives above, the derivative of the left-hand-side vanishes. On the other hand,  the Hadamard  formula \eqref{eq:had_boundary} yields
\[\frac{d}{dt}\left(\int_{\Sigma_t}\tilde{g}_tv\right)_{t=0}=\int_\S fv,\] with
\begin{align*}
f&:=\left[\lambda''v+\lamti v''+u''+\th_\nu'(\parnu w+Hw)+\th_\nu\left((\parnu w)'+H'w +Hw'\right)\right]\\
&+\th_\nu\left[\parnu \left((\lamti v'+u')+\th_\nu(\parnu w+Hw)\right)+H\left((\lamti v'+u')+\th_\nu(\parnu w+Hw)\right)\right],
\end{align*}
where we denote abusively $\theta_\nu':=\frac{d}{dt}(\th^t_{\nu_t})_{t=0}$ and set $w:=\lamti v+u$, and we have used that $\lambda'=0$ since $X_t$ is volume-preserving (see \eqref{eq:1st_2nd_vol_preserv}). 


We now simplify the expression of $f$. Using $\nabsig w=\nabsig (\lamti v+u)=0$ due to the specific structure of the eigencouple $(u,v)$ (see Corollary \ref{cor:critpt}), the differentiation $(\parnu w)'$ gives 
\begin{align}\nonumber
(\parnu w)':=\frac{d}{dt}(\parnut (\lamti_t v_t+u_t))_{t=0}&=\parnu(\lamti v+u)'+\nabla w\cdot \nu'\\\nonumber
&= \parnu (\lamti u'+v')-\nabsig w\cdot \nabsig \th_\nu\\
&=\parnu (\lamti u'+v')\label{eq:parnuw'}
\end{align}
recalling that $\lambda'=0$, and where we also used \eqref{eq:nu'}.  As $v$ is constant, $w=-\Delta_\Sigma v=0$ over $\Sigma$. By 
gathering the terms in $u,v$ the ones in $u', v'$ and the ones in $u'', v''$ together, one can rewrite $f$ as the sum of $Z_1:=Z_1(u'',v'')$, $Z_2:=Z_2(u',v')$, $Z_3:=Z_3(u,v)$, where 
\begin{align*}
    Z_1&=\lamti v''+u'',\\
    Z_2&=2\th_\nu\left(\parnu (\lamti v'+u')+H(\lamti v'+u')\right),\\
    Z_3&= \lambda''v 
    +(\parnu w+Hw)(\th_\nu'+\th_\nu\parnu \th_\nu+H\th_\nu^2)
    +\th_\nu^2(\parnunu w+H\parnu w)
\end{align*}
The differentiation of \eqref{eq:beforedif_vt'} thus yields
\begin{align}
0=\int_\S (Z_1+Z_2+Z_3)v
&=\int_\S (\lamti v''+u'')v+\int_\S\left[2\th_\nu\left(\parnu (\lamti v'+u')+H(\lamti v'+u')\right)\right]v\nonumber\\
&+\lambda''\int_\S v^2+\int_\S \th_\nu^2(\parnunu w+H\parnu w)v.\label{eq:lambda''_v2}
\end{align}
where we used that
\[
\int_\S (\parnu w+Hw)(\th_\nu'+\th_\nu\parnu \th_\nu+H\th_\nu^2)v=0
\]
since $u,v, \parnu u,\parnu v$ are constant over $\S$ (thanks to Corollary \ref{cor:critpt} and the extension of $v$, see \eqref{eq:def_ext_egf}), and $X_t$ is volume-preserving at second-order (see \eqref{eq:1st_2nd_vol_preserv}).

\textbf{Differentiation of $u_t'$.} 

 To differentiate the equation satisfied  by $u_t'$ together with its boundary condition, we refer to \cite[Theorem 5.5.2]{HP18}. We obtain
\begin{equation}
\begin{cases}
    
-\Delta u''=\lambda''u+2\lambda'u'+\lambar u'', & \text{ in } \B,\\
\parnu u''=\th_\nu(\parnu g-\parnunu u')+\nabsig u'\nabsig \th_\nu+g', &\text{ over }\S,
\end{cases}
 \label{eq:2ndderiv_u'}
\end{equation}
where we have set $g_t:=\th^t_{\nu_t}\big(\parnut(v_t-u_t)-\parnunut u_t\big)+\nabsigt u_t\nabsigt \th^t_{\nu_t}+(v_t'-u_t')$.


Let us start by expliciting the boundary condition in  \eqref{eq:2ndderiv_u'}. One has 
\begin{align*}
    \parnu g=\parnu \th_\nu(\parnu (v-u)-\parnunu u)+\th_\nu\parnu(\parnu (v-u)-\parnunu u)\\+\parnu(\nabsig u\cdot \nabsig \th_\nu)+\parnu(v'-u')
\end{align*}
But from \cite[Appendix A.3]{DKL16}, 
\[\parnu(\nabsig u\cdot \nabsig \th_\nu)=\nabsig (\parnu u)\cdot \nabsig \th_\nu+\nabsig  u\cdot \nabsig (\parnu \th_\nu)-2(D^2b\nabsig u)\cdot \nabsig \th_\nu.\]
Since $u$ is radial, $\nabsig u=\nabsig (\parnu u)=0$, so that $\parnu(\nabsig u\cdot \nabsig \th_\nu)=0$. On the other hand, we rewrite $g'$:
\[
    g'= \th_\nu' \bigl( \parnu (v-u)-\parnunu u \bigr)+\th_\nu \Bigl( \bigl( \parnu (v-u) \bigr)'-(\parnunu u)' \Bigr)+(\nabsig u\cdot\nabsig \th_\nu)'+(v''-u'')
\]

Now, as in \eqref{eq:parnuw'}, we have $(\parnu(v-u))'=\parnu (v-u)'$. On the other hand, 
\begin{align*}
    (\parnunu u)'=\frac{d}{dt}\bigl( \langle D^2u_t\nu_t,\nu_t\rangle \bigr)_{t=0}&=\langle D^2u'\nu,\nu\rangle-\langle D^2u\nabsig \th_\nu,\nu\rangle-\langle D^2u\nu,\nabsig \th_\nu\rangle
\end{align*}
As $u(x)=p(\abs{x})$ for a one dimensional function $p:[0,1]\to\R$, one can express $D^2u$:
\[
D^2u(x)=p''(\abs{x})\nu(x)\otimes\nu(x)+p'(|x|)D\nu(x).
\]
As $(\nu\otimes \nu)\cdot\nabsig \th_\nu=0$ and $D\nu^T=D\nu$ is tangential (\textit{i.e.} $D\nu\cdot\nu=0$, see \eqref{eq:nu'}), then we obtain $(\parnunu u)'=\parnunu u'$. Finally, using again $\nabsig u=0$, we have
\begin{align*}
    (\nabsig u\cdot\nabsig \th_\nu)'=\frac{d}{dt}(\nabsigt u_t\cdot \nabsigt \th^t_{\nu_t})_{t=0}&=\frac{d}{dt}(\nabsigt u_t)_{t=0}\cdot \nabsig \th_\nu\\
    &= \Big(\nabsig u'+(\nabsig u\cdot\nabsig \th_\nu)\nu+\parnu u\nabsig \th_\nu\Big)\cdot \nabsig \th_\nu\\
    &=\nabsig u'\cdot \nabsig \th_\nu +\parnu u|\nabsig \th_\nu|^2.
\end{align*}
Gathering the terms in $u,v$ the ones in $u', v'$ and the ones in $u'', v''$ together, one can express $\parnunu u''$ from \eqref{eq:2ndderiv_u'} as the sum of $Y_1:=Y_1(u'',v'')$, $Y_2:=Y_2(u',v')$ and $Y_3(u,v)$, where
\begin{align*}
    Y_1&=v''-u'',\\
    Y_2&=2\Big(\th_\nu\left(\parnu (v-u)'-\parnunu u'\right)+\nabsig u'\cdot\nabsig \th_\nu\Big),\\
    Y_3&=
    (\parnu (v-u)-\parnunu u)(\th_\nu'+\th_\nu\parnu \th_\nu)
    +\th_\nu^2(\parnunu (v-u)-\partial_{\nu\nu\nu} u)+\parnu u|\nabsig \th_\nu|^2.
\end{align*}
Multiplying the equation in \eqref{eq:2ndderiv_u'} by $u$ and integrating by parts one has
\begin{align*}
    -\int_\B u''\Delta u+\int_\S(u''\parnu u-\parnu u''u)=-\int_\B \Delta u'' u=\lambda''\int_\B u^2+\lambar\int_\B u''u
\end{align*}
so that 
\begin{align}
    \lambda''\int_\B u^2&=\int_\S(u''\parnu u-\parnu u''u)\nonumber\\
    &=\int_\S \Big(u''(v-u)-u(Y_1+Y_2+Y_3)\Big)\nonumber\\
    &=\int_\S\Big((u''v-uv'')-Y_2u-Y_3u\Big) \nonumber 
\end{align}
We integrate by parts the term $\int_\S u\nabsig u'\cdot\nabsig \th_\nu$ present in $\int_\S Y_2u$:
\begin{align*}
\int_\S u\nabsig u'\cdot\nabsig \th_\nu=\int_\S\nabsig u'\cdot\nabsig (u\th_\nu)&=-\int_\S u\th_\nu\Delta_\Sigma u'\\&=\int_\S u\th_\nu(\lambar u'+H\parnu u'+\parnunu u'),
\end{align*}
thus providing 
\[
-\int_\S Y_2u=-2\int_\S u\th_\nu\Big(\parnu v'+\lambar u'+(H-1)\parnu u'\Big).
\]
We get as a consequence
\begin{equation}
    \lambda''\int_\B u^2=\int_\S\Big((u''v-uv'')-2 u\th_\nu\Big(\parnu v'+\lambar u'+(H-1)\parnu u'\Big)-Y_3u\Big)\label{eq:lambda''_u^2}
\end{equation}

\textbf{Summing the two parts.} 

Summing \eqref{eq:lambda''_v2} and \eqref{eq:lambda''_u^2}, we get\begin{align*}
    \lambda''=\int_\S W_1+W_2+W_3
\end{align*}
where we have set 
\begin{align*}
    W_1&=(u''v-uv'')-(\lamti v''+u'')v\\
    W_2&=-2u\th_\nu\Big(\parnu v'+\lambar u'+(H-1)\parnu u'\Big)-2v\th_\nu\left(\parnu (\lamti v'+u')+H(\lamti v'+u')\right)\\
    W_3&=-Y_3u-v\th_\nu^2(\parnunu w+H\parnu w).
\end{align*}
We have
\[
W_1=-v''(\lamti v+u)=-v''\Delta_\Sigma v=0.
\]
As for $W_2$, using $w=0$ and $\parnu u =v-u$ on $\S$ one gets
\[
W_2=-2\th_\nu\Big(H\lamti vv'+(\lambar u+vH)u'+(\parnu u+Hu)\parnu u'\Big).
\]
 Now, by the boundary condition $\parnu u'=\th_\nu(\parnu (v-u)-\parnunu u)+(v'-u')$ from \eqref{eq:u_t'}, we get after simplifications
\[W_2=-2\th_\nu\Big(\parnu u v'+\left((H-1)\parnu u+\lambar u\right) u'+\th_\nu(\parnu (v-u)-\parnunu u)(\parnu u+Hu)\Big).\]
In $Y_3$,  \eqref{eq:1st_2nd_vol_preserv} allows to rewrite 
\[\int_\S u(\parnu (v-u)-\parnunu u)(\th_\nu'+\th_\nu\parnu \th_\nu)=-\int_\S H\th_\nu^2u(\parnu (v-u)-\parnunu u),\]
since $u(\parnu (v-u)-\parnunu u)$ is a constant over $\S$ (thanks to Corollary \ref{cor:critpt} and the chosen extension of $v$, see \eqref{eq:def_ext_egf}). 
We thus obtain\begin{equation}\nonumber
\lambda''=\int_\S \alpha \th_\nu^2-\int_\S u\parnu u|\nabsig \th_\nu|^2-2\int_\S \th_\nu\Big((\parnu u) v'+\left((H-1)\parnu u+\lambar u\right) u'\Big),
\end{equation}
where 
\begin{align*}
\alpha:&= Hu(\parnu (v-u)-\parnunu u)-u(\parnunu (v-u)-\partial_{\nu\nu\nu} u)\\
&-v(\parnunu w+H\parnu w)-2(\parnu (v-u)-\parnunu u)(\parnu u+Hu).
\end{align*}
As a consequence, there only remains to simplify the expression of $\alpha$ to conclude the proof of the proposition. 

\textbf{Simplification of $\alpha$.} 

Computing separately the terms in $\partial_{\nu\nu\nu}u$, $\parnunu u$, $\parnu u$, and since $\parnu v=\parnunu v=0$ (thanks to the choice of extension of $v$, see \eqref{eq:def_ext_egf}) we get 
\begin{equation}\label{eq:alpha_not_simple}
\alpha=-(H-2)(\parnu u)^2+(\parnu u+Hu)(\parnunu u)+u(\partial_{\nu\nu\nu}u).
\end{equation}
Let us now find expressions for $\parnunu u$ and $\partial_{\nu\nu\nu}u$.

As $u$ is radial one we write  $u(x)=p(|x|)$ for some  $p\in\C^{2,\eta}(\R)$. From the decomposition of the Laplacian
\[\Delta u =\Delta_{\Sigma_r} u+H_r\partial_{\nu_r} u+\partial_{\nu_r\nu_r} u=H_r\partial_{\nu_r} u+\partial_{\nu_r\nu_r} u \ \text{ over } \mathbb{S}^{d-1}_r,\] 
for each $r\in (0,1]$, we thus get
\[\forall r\in (0,1), \
p''(r)+\frac{d-1}{r}p'(r)=-\lambar p(r),
\]
so that
\[\forall r\in(0,1),\ 
p'''(r)=-\frac{d-1}{r}p''(r)+\left(\frac{d-1}{r^2}-\lambar\right)p'(r),
\]
which gives for $r=1$, written in terms of $u$,
 \[
 \partial_{\nu\nu\nu}u=-H\parnunu u+(H-\lambar)\parnu u \ \text{ over } \S.
 \]
 On the other hand note that the decomposition of the Laplacian also ensures 
 \[
 \parnunu u=-H\parnu u+\Delta u=-H\parnu u -\lambar u \ \text{ over }\S.
 \]
 Using these two expressions, we rewrite \eqref{eq:alpha_not_simple}:
\begin{align*}
    \alpha&=-(H-2)(\parnu u)^2+(\parnu u+Hu)(\parnunu u)+u(-H\parnunu u+(H-\lambar)\parnu u)\\
    &=\parnu u\left(-(H-2)\parnu u+\parnunu u+u(H-\lambar)\right)\\
    &=\parnu u\left(-(H-2)\parnu u-\lambar u-H\parnu u+u(H-\lambar)\right)\\
    &=\parnu u\left(-2(H-1)\parnu u+u(H-2\lambar) \right)
\end{align*}
This finishes the proof.
\end{proof}

When analysing the second-shape derivative of a functional at a critical point (in our case the ball), it usually reveals very convenient to work with a diagonalised expression (see for instance \cite{DKL16,M20}, among many others, and the general reference \cite{DL19}). Building on the expression given by Proposition \ref{prop:second_shaped}, in Proposition \ref{prop:diag_lag} below we thus provide a diagonalised form of the shape hessian of $\lambda$. Let us start by recalling some standard facts regarding spherical harmonics.

\textbf{Spherical harmonics.} As is very common for such energetic functionals, we will see that a diagonalisation basis is given by the $L^2(\S)$ basis of the spherical harmonics, which are defined as the restrictions to the sphere of homogeneous harmonic polynomials defined in $\R^d$ (see \cite{Mull} for a general reference on spherical harmonics). For any $k\in\N$, we thus denote by $\left(Y^k_1,\ldots Y^k_{L_k}\right)$ an $L^2(\S)$-orthonormal basis of the space of spherical harmonics of order $k$ (\text{i.e.} the restrictions to $\S$ of homogeneous harmonic polynomials of degree $k$), so that $\{Y^k_\ell , k\in\N, 1\leq \ell \leq L_k\}$ is an orthonormal basis of $L^2(\S)$. Let us note that the family $\{Y^k_\ell \}_{k,\ell}$ is also diagonalising the Laplace-Beltrami operator $-\Delta_\Sigma$, as one has
\[
\forall k\in\N, \forall l\in\llbracket1,L_k\rrbracket,\ 
-\Delta_\Sigma Y^k_\ell =\sigma_kY^k_\ell .
\]
where the sequence of eigenvalues $\{\sigma_k\}_{k\in\N}$ is given by
\begin{equation}
    \label{eq:sigmak_harmonic}\forall k\in\N,\ \sigma_k=k(k+n-2).\end{equation}
If $(k,\ell), (k',{\ell'})$ are two integer couples, one has the integration by parts
\[
\int_{\S}\nabsig Y^k_\ell \cdot\nabsig Y^{k'}_{\ell'}=\sigma_k\int_{\S}Y^k_\ell Y^{k'}_{\ell'}=\begin{cases}
    \sigma_k, \text{ if } (k,\ell)=(k',{\ell'}),\\
    0, \text{ else.}
\end{cases}
\]
This entails that for any $h\in {W^{1,2}}(\S)$ written in the $\{Y^k_\ell \}_{k,\ell}$ basis as
\[
    h=\sum_{k=0}^{+\infty}\sum_{\ell=1}^{L_k}h_{k,\ell}Y^k_\ell ,
\]
for some real coefficients $h_{k,\ell}$,
one simply has
\begin{equation}
    \label{eq:L2_nab_h_ykl}
    \int_{\S}|h|^2=\sum_{k=0}^{+\infty}\sum_{\ell=1}^{L_k}h_{k,\ell}^2 \text{ and } 
    \int_{\S}|\nabsig h|^2=\sum_{k=0}^{+\infty}\sum_{\ell=1}^{L_k}\sigma_kh_{k,\ell}^2.
\end{equation}

\textbf{Diagonalisation of the shape Hessian.}
Any arbitrary $h\in\C^{2,\eta}(\S)$ is extended into a $\C^{2,\eta}(\R^d)$ function by setting
\begin{equation}
    \label{eq:ext_h}
\forall x\in\R^d,\ h(x):=\chi(x) h\left(\frac{x}{|x|}\right),
\end{equation}
for a smooth cut-off function $\chi$ constant in a neighbourhood of $\S$, thus getting an extension which is constant along the normal direction (close to $\S$). 
In the following proposition we write in a diagonalised form the second derivative of the Lagrangian $\Lm_\mu:=\lambda-\mu\mathrm{Vol}$ at the ball along an arbitrary $h\in\C^{2,\eta}(\S)$ (where $\mu$ is the Lagrange multiplier associated to the volume constraint, see Corollary \ref{cor:critpt}), which is defined as the second Fréchet derivative of $\Lm_\mu$ at the ball along the field $h\nu_\B$ (recall that $\nu_\B$ is extended through \eqref{eq:ext_nu}):
\begin{equation}
    \label{eq:notation_second_lmu}
\Lm_\mu''(\B)\cdot(h,h):=\Big(\frac{d^2}{dt^2}\Big(\Lm_\mu\left((\text{Id}+th\nu_\B)(\B)\right)\Big)_{t=0}.
\end{equation}

\begin{proposition}[Diagonalised second-shape derivative at the ball]\label{prop:diag_lag}
    Let $h\in \C^{2,\eta}(\S)$, which we suppose written in the spherical harmonics basis as
    \begin{equation}\label{eq:diag_h}
    h=\sum_{k=0}^{+\infty}\sum_{\ell=1}^{L_k}h_{k,\ell}Y^k_\ell ,
    \end{equation}
    and assume that $h_{0,1}=\int_\S h=0$.
    Let $\mu\in\R$ be the Lagrange multiplier associated to the volume constraint (see Corollary \ref{cor:critpt}) and $\mathcal{L}_\mu:=\lambda-\mu\mathrm{Vol}$ be the corresponding Lagrangian. Then
        \begin{equation}
        \label{eq:lmu_diag_exp}
    \Lm_\mu''(\B)\cdot(h,h)=\sum_{k=1}^{+\infty}\sum_{\ell=1}^{L_k} \left(\beta(\sigma_k-H)+\left(\frac{\gamma}{\sigma_k-\lamti}+\delta\right)\left(p_k+\parnu u\right)\right)h_{k,\ell}^2,\end{equation}
    where $(\beta,\gamma,\delta)$ are defined in Proposition \ref{prop:second_shaped}, we have set $p_k:=p_k(1)$,  $p_k:[0,1]\to\R$ being the unique solution to 
    \begin{equation}
        \label{eq:def_pk}
        \begin{cases}
-\frac{1}{r^{d-1}}\frac{d}{dr}\left(r^{d-1}\frac{dp_k}{dr}\right)+\left(\frac{\sigma_k}{r^2}-\lambar\right)p_k=0, &\text{in } (0,1),\\
p_k'(1)+d_kp_k(1)=-d_k\parnu u-\parnunu u,
\end{cases}
        \end{equation}
and $d_k$ denotes the positive number
\[d_k:=\left(1-\frac{1}{\sigma_k-\lamti}\right).\]
\end{proposition}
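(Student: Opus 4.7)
The plan is to evaluate $\Lm_\mu''(\B)\cdot(h,h)$ by invoking Proposition \ref{prop:second_shaped} along a convenient path with $X_0'\cdot\nu_\B = h$ on $\S$, and then to project the resulting quadratic form onto the spherical harmonic basis. The first step rests on the classical structure theorem for shape Hessians at a critical point (cf.\ \cite{DL19}): since Corollary \ref{cor:critpt} gives $\lambda'(\B) = \mu\,\mathrm{Vol}'(\B)$ (so that $\Lm_\mu'(\B) = 0$ on \emph{every} field), the value $\Lm_\mu''(\B)\cdot(h,h)$ depends on the underlying deformation only through its normal trace $h$. I would therefore work with the path $X_t := \text{Id} + th\nu_\B + t^2\psi\nu_\B$, where $\psi := -\frac{H}{2|\S|}\int_\S h^2$ is a constant; thanks to $\int_\S h = 0$, a direct application of the Hadamard formula \eqref{eq:had_boundary} shows that $X_t$ preserves volume up to second order. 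Consequently $\mathrm{Vol}''$ vanishes along this path, so that $\Lm_\mu''(\B)\cdot(h,h) = \frac{d^2}{dt^2}\lambda(X_t(\B))_{|t=0}$, and Proposition \ref{prop:second_shaped} applies to yield
\[ \Lm_\mu''(\B)\cdot(h,h) = \int_\S \alpha\, h^2 + \int_\S \beta\, |\nabsig h|^2 + \int_\S h\,(\gamma v' + \delta u'). \]

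Next, I would restrict $(u',v')$ to $\S$ and expand in spherical harmonics. Exploiting that on the ball $u$ is radial, $v$ is constant on $\S$, $\lamti v + u = 0$ and $\parnu u = (\lamti+1)v$ on $\S$, $\parnu v = \nabsig v = 0$ (by the normal extension \eqref{eq:def_ext_egf}), and $\lambda'(\B)\cdot(h\nu_\B) = \mu \int_\S h = 0$, the systems \eqref{eq:syst_u'}--\eqref{eq:v'eq} collapse, after also using $\parnunu u = -\lambar u - H\parnu u$ on $\S$, to
\[
\begin{cases}
-\Delta u' = \lambar u' & \text{in } \B, \\
\parnu u' + u' = v' - \tfrac{\delta}{2}\, h & \text{on }\S,
\end{cases}
\qquad
-\Delta_\Sigma v' - \lamti v' = u' + (\parnu u)\,h \quad \text{on }\S.
\]
Writing $h = \sum_{k\geq 1,\ell} h_{k,\ell}\, Y^k_\ell$ and seeking $u'(r,\omega) = \sum p_k(r)\, h_{k,\ell}\, Y^k_\ell(\omega)$, separation of variables with $-\Delta_\Sigma Y^k_\ell = \sigma_k Y^k_\ell$ converts the bulk PDE into the radial ODE in \eqref{eq:def_pk}. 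Projecting the $v'$-equation onto $Y^k_\ell$ gives $v'_{k,\ell} = (p_k(1) + \parnu u)\,h_{k,\ell}/(\sigma_k - \lamti)$, after which projecting the Robin-type boundary condition on $u'$ onto $Y^k_\ell$ and eliminating $v'_{k,\ell}$ delivers exactly the boundary relation in \eqref{eq:def_pk}.

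Substituting these expansions into the three surface integrals and invoking the orthonormality \eqref{eq:L2_nab_h_ykl}, the form diagonalises and the coefficient of $h_{k,\ell}^2$ becomes
\[ \alpha + \beta\, \sigma_k + \Bigl(\frac{\gamma}{\sigma_k-\lamti} + \delta\Bigr)(p_k(1)+\parnu u) - \delta\, \parnu u. \]
Matching with the target \eqref{eq:lmu_diag_exp} then reduces to the pointwise algebraic identity $\alpha + \beta H - \delta\, \parnu u = 0$ on $\S$, a short computation from the explicit expressions of $\alpha,\beta,\delta$ combined once more with $\parnunu u = -\lambar u - H\parnu u$.

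The bulk of the work is organisational rather than conceptual: the main technical points to watch are the invocation of the structure theorem that allows the reduction to a second-order volume-preserving path, the derivation of the boundary condition in \eqref{eq:def_pk} by correctly coupling the Robin condition on $u'$ with the projected $v'$-equation, and the verification of the final scalar identity. As a sanity check, for $k=1$ one has $\sigma_1 = d-1 = H$, so the first summand of the coefficient vanishes; moreover, differentiating the radial equation satisfied by $u$ shows that (up to sign) the radial derivative of $u$ solves the $k=1$ ODE in \eqref{eq:def_pk}, yielding $p_1(1)+\parnu u = 0$ and hence a vanishing total coefficient, consistently with the translational invariance of $\Lm_\mu$.
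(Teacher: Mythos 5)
Your proposal is correct and essentially mirrors the paper's proof: you build a second-order volume-preserving normal deformation, apply Proposition \ref{prop:second_shaped}, expand $(u',v')$ in spherical harmonics via separation of variables to get the radial ODE and boundary condition of \eqref{eq:def_pk}, and close with the algebraic identity $\alpha = -\beta H + \delta\,\parnu u$. The only cosmetic differences are that you pick a particular constant correction $\psi$ (the paper works with a generic $\xi$ satisfying $\int_\S\xi\cdot\nu = -\int_\S Hh^2$, which is equivalent), and you package the boundary datum for $u'$ as $-\tfrac{\delta}{2}h$ (which is a correct rewriting of $-h(\parnu u+\parnunu u)$ using $\parnunu u=-\lambar u-H\parnu u$); both shortcuts check out.
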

\begin{remark}
Since $\sigma_1=H$ while, as we will see below (see Lemma \ref{Le:Comparison}) $p_1=-\parnu u$, the coefficient of degree $k=1$ in  \eqref{eq:lmu_diag_exp} cancels. 
\end{remark}

\begin{proof}

\textbf{Step 1: expression of $\Lm_\mu''(\B)\cdot(h,h)$.} 

Relying on Proposition \ref{prop:second_shaped}, we first find an expression for $\Lm_\mu''(\B)\cdot(h,h)$.  
This is obtained by building a second-order preserving path $(X_t)$, in the fashion of \cite[Proof of Lemma 2.10]{DL19}. We set 
\[
X_t:=\text{Id}+th\nu_\B+\frac{t^2}{2}\xi
\]
for some field $\xi:\R^d\to\R^d$ which we determine now. 
As $\int_\Sigma h=0$, one always has thanks to the Hadamard  formula \eqref{eq:had_domain}
\[
\frac{d}{dt}\Big(\mathrm{Vol}(X_t(\B))\Big)_{t=0}=0,
\]
while (see the Hadamard  formula \eqref{eq:had_boundary})
\begin{align}
\frac{d^2}{dt^2}\Big(\mathrm{Vol}(X_t(B))\Big)_{t=0}&=\int_\S\left(\frac{d}{dt}\left(X_t'\cdot\nu_t\right)_{t=0}+h\left(\parnu h+Hh\right)\right)\nonumber\\
&\nonumber = \int_\S\left(\xi\cdot\nu+Hh^2\right)\\
&= \mathrm{Vol}'(\B)\cdot(\xi)+\mathrm{Vol}''(\B)\cdot(h\nu_\B,h\nu_\B), \label{eq:second_vol_Xt}
\end{align}
since $X_0'\cdot\frac{d}{dt}\left(\nu_t\right)_{t=0}=-h\nu_\B\cdot\nabsig h=0$ (see \eqref{eq:nu'}) and $\parnu h=0$ over $\S$.
Assuming that the field $\xi:\R^d\to\R^d$ is any field verifying the condition
\begin{equation*}
\int_\S \xi\cdot\nu=-\int_\S Hh^2,
\end{equation*}
we  get that $t\mapsto X_t$ is second-order volume preserving, \textit{i.e.}
\begin{equation}
    \label{eq:volpreserv_build}
\frac{d}{dt}\Big(\mathrm{Vol}(X_t(\B))\Big)_{t=0}=\frac{d^2}{dt^2}\Big(\mathrm{Vol}(X_t(\B))\Big)_{t=0}=0.
\end{equation}
Now, by the chain rule, and recalling that $\lambda'(\B)=\mu\mathrm{Vol}'(\B)$ (see Corollary \ref{cor:critpt}),
\begin{align*}
\frac{d^2}{dt^2}\Big(\lambda(X_t(\B))\Big)_{t=0}&=\lambda''(\B)\cdot(h\nu_\B,h\nu_\B)+\lambda'(\B)\cdot(\xi)\\
&=\lambda''(\B)\cdot(h\nu_\B,h\nu_\B)+\mu \mathrm{Vol}'(\B)\cdot(\xi)\\
&= \lambda''(\B)\cdot(h\nu_\B,h\nu_\B)-\mu\mathrm{Vol}''(\B)\cdot(h\nu_\B,h\nu_\B), 
\end{align*}
using \eqref {eq:second_vol_Xt} and \eqref{eq:volpreserv_build}. As a consequence we thus get
\begin{equation}
    \label{eq:exp_lambda_mu}
    \Lm_\mu''(\B)\cdot(h,h)=\frac{d^2}{dt^2}\Big(\lambda(X_t(\B))\Big)_{t=0}
\end{equation}
for some second-order volume preserving path $t\mapsto X_t$ verifying $(X_0'\cdot\nu)_{|\S}=h$. Applying Proposition \ref{prop:second_shaped}, we obtain
\begin{equation}
    \label{eq:lmu_hh_decomp}
\Lm_\mu''(\B)\cdot(h,h)=\int_{\S}\alpha h^2+\int_{\S}\beta|\nabla_\Sigma h|^2+\int_{\S} h\Big(\gamma v_h'+\delta u_h'\Big),
\end{equation}
where we recall (see \eqref{eq:syst_u'} and \eqref{eq:v'eq} and note that by Corollary \ref{cor:critpt} $u$ is radial while $v$ is constant) that $(u_h',v_h')=\frac{d}{dt}(u_t,v_t)_{t=0}$ are the solutions to 
\begin{equation}
    \begin{cases}
        -\Delta u_h'=\lambar u_h' &\text{in } \B,\\
        \parnu u_h'+u_h'=-h(\parnu u+\parnunu u)+v_h'&\text{over }\S,\\
        -\Delta_\Sigma v_h'=\lamti v_h'+u_h'+h \parnu u &\text{over }\S,\\
        \int_{\B}u_h'u+\int_{\S}v_h'v=0.
    \end{cases}\label{eq:syst_u'v'}
    \end{equation}
Let us first compute the ``elementary" elements of this quadratic form:
\[
a_{k,\ell}:=\int_{\S}\left(\alpha |Y^k_\ell |^2+\beta|\nabla_\Sigma Y^k_\ell |^2+ Y^k_\ell (\gamma v_{Y^k_\ell }'+\delta u_{Y^k_\ell }')\right).
\]

\textbf{Step
2: Computation of $a_{k,\ell}$.} 
Since 
\[
\int_\S |Y^k_\ell |^2=1 \text{ and } \int_\S|\nabla_\Sigma Y^k_\ell |^2=\sigma_k,
\]
one has
\begin{equation}
    \label{eq:akl_exp}
a_{k,\ell}=\alpha+\beta\sigma_k+\int_\S Y^k_\ell (\gamma v_{Y^k_\ell }'+\delta u_{Y^k_\ell }').
\end{equation}
We search for $u_{Y^k_\ell }'$ and $v_{Y^k_\ell }'$ in the form
    \begin{equation}
    \begin{cases}
        u_{Y^k_\ell }'(x)=p_k^l(|x|)Y^k_\ell (x/|x|) &\text{in } \B,\\
        v_{Y^k_\ell }'(x)=q_k^lY^k_\ell (x) &\text{over } \S,
    \end{cases}\label{eq:u'v'_rad}
    \end{equation}
    for some function 
    $p_k^l:[0,1]\to\R$ and number $q_k^l\in \R$ to be determined. Thanks to \eqref{eq:syst_u'v'} this rewrites
\[\begin{cases}
    
Y^k_\ell \left(-\frac{1}{r^{d-1}}\frac{d}{dr}\left(r^{d-1}\frac{dp_k^l}{dr}\right)+\frac{\sigma_k}{r^2}p_k^l\right)=\lambar p_k^l Y^k_\ell  &\text{in } \B,\\
Y^k_\ell \left((p_k^l)'(1)+p_k^l(1)\right)=Y^k_\ell \left(-\parnu u-\parnunu u+q_k^l\right) &\text{over } \S,\\
\sigma_kq_k^lY^k_\ell =Y^k_\ell \left(\lamti q_k^l+p_k^l(1)+\parnu u\right) &\text{over } \S,
\end{cases}
\]
while the fourth orthogonality condition is always verified since $u$ is radial, $v$ is constant (see Corollary \ref{cor:critpt}) and $\int_\S Y^k_\ell =0$:
\[
\int_\B \Big(p_k^l(|x|)Y^k_\ell (x/|x|)\Big)u(x)=0,\
\int_\S \Big(q_k^lY^k_\ell (x)\Big)v(x)=0.
\]
Therefore, simplifying by $Y^k_\ell $ we deduce
\begin{equation}
    \begin{cases}
-\frac{1}{r^{d-1}}\frac{d}{dr}\left(r^{d-1}\frac{dp_k^l}{dr}\right)+\left(\frac{\sigma_k}{r^2}-\lambar\right)p_k^l=0 &\text{in } (0,1),\\
(p_k^l)'(1)+d_kp_k^l(1)=-d_k\parnu u-\parnunu u,\\
q_k^l=\frac{1}{\sigma_k-\lamti}\left(p_k^l(1)+\parnu u\right),
\end{cases}
\label{eq:syst_pk_qk}
\end{equation}
where we have set for $k\geq1$ the positive number
\[d_k:=\left(1-\frac{1}{\sigma_k-\lamti}\right).\]
Since both systems \eqref{eq:syst_u'v'} and \eqref{eq:syst_pk_qk} have a unique solution (this follows from the uniqueness of solutions to \eqref{eq:syst_u'v'}), we deduce that the couple $(u_{Y^k_\ell }',v_{Y^k_\ell }')$ has the corresponding form \eqref{eq:u'v'_rad}. As $p_k^l$ and $q_k^l$ do not depend on $l$ we will drop the index $l$ to write more simply $p_k$ and $q_k$.
Plugging the above expression into \eqref{eq:akl_exp} one deduces 
\[
a_{k,\ell}=\alpha+\beta\sigma_k+\gamma q_k+\delta p_k,\]
denoting more simply $p_k:=p_k(1)$.
Now, noting the identity $\alpha=-H\beta+\parnu u\delta$ and using the expression of $q_k$ from \eqref{eq:syst_pk_qk} we obtain
\begin{align}\nonumber
a_{k,\ell}&=-\beta H+\beta\sigma_k+\frac{\gamma}{\sigma_k-\lamti}(p_k+\parnu u)+\delta(p_k+\parnu u)\\
&=\beta(\sigma_k-H)+\left(\frac{\gamma}{\sigma_k-\lamti}+\delta\right)(p_k+\parnu u).\label{eq:lmu_ykl_f}
\end{align}
\textbf{Step 3: Conclusion} We now show that for an arbitrary $h\in\C^{2,\eta}(\S)$ it holds
\begin{equation}
    \label{eq:lmu_quadratic}
\Lm_\mu''(\B)\cdot(h,h)=\sum_{k=1}^{+\infty}\sum_{\ell=1}^{L_k}a_{k,\ell}h_{k,\ell}^2,
\end{equation}
which concludes the proof, in the view of \eqref{eq:lmu_ykl_f}.

Observe first that the solution $(u'_h,v'_h)$ to \eqref{eq:syst_u'v'} is linear in $h$, which is directly implied by the linearity of the system \eqref{eq:syst_u'v'} in $h$ combined with uniqueness of its solution. Note on the other hand that if $(k,\ell)$ and $(k',{\ell'})$ are two integer couples, then using the expressions \eqref{eq:u'v'_rad} it holds
\[
\int_\S Y^k_\ell v'_{Y^{k'}_{\ell'}}=q_k\int_\S Y^k_\ell Y^{k'}_{\ell'} \text{ and } \int_\S Y^k_\ell u'_{Y^{k'}_{\ell'}}=p_k\int_\S Y^k_\ell Y^{k'}_{\ell'}.
\]
Plugging these two observations in expression \eqref{eq:lmu_hh_decomp} and recalling \eqref{eq:L2_nab_h_ykl}, we obtain the desired \eqref{eq:lmu_quadratic}, thus finishing the proof of the proposition.
\end{proof}

\textbf{Second-order optimality conditions (Proof of Theorem \ref{thm:local_ball})}
Relying on Proposition \ref{prop:diag_lag}, we are now in a position to perform the local analysis announced in Theorem \ref{thm:local_ball}.
For the rest of this section we reintroduce the parameters $c_i$, $c_b$ in the notation, thus denoting $\lambda_{c_i,c_b}$ (instead of $\lambda$) the eigenvalue we seek to optimise, and $\Lm_{c_i,c_b,\mu}$ (instead of $\Lm_\mu$) the Lagrangian functional (defined in Proposition \ref{prop:diag_lag}).
Let us start by commenting on the gap to fill to derive a $\C^{2,\eta}$-local minimality statement from Theorem \ref{thm:local_ball}. 

\begin{remark}[From optimality conditions to local minimality]\label{rk:locmin}
    As we work in infinite dimensions, it is not direct (and in general it is false) that criticality and positivity of the second-order derivative implies local minimality. This problem comes from the fact that the differentiability norm (in our case $\C^{2,\eta}(\S)$) is much stronger than the coercivity norm (here ${W^{1,2}}(\S)$). A standard strategy for filling this so-called ``norm discrepancy" is to control the third-order terms appearing in the Taylor expansion of the functional by
    \[
    \|h\|_{{W^{1,2}}(\S)}^2\|h\|_{\C^{2,\eta}(\S)},
    \]
    where $h:\S\to\R$ is the perturbation (for a systematic approach, see \cite{DL19}). Although we strongly believe that this should hold, showing it rigorously would bring a lot of additional technical difficulties. In fact, it would imply to compute the second derivative of the eigenvalue $\lambda_{c_i,c_b}$ at a perturbation of the ball $(\text{Id}+th\nu_\B)(\B)$, and to estimate its variation with respect to the second derivative at the ball.
\end{remark}

We begin by showing two preparatory lemmas, which we will use in the course of the proof. The first one refines the estimate $\lambda_{c_i,c_b}-c_i>0$ from Proposition \ref{prop:lambda_cicb_est} in the case $c_i<-c_b$, by showing that the latter remains positive away from $0$ as $c_i\to-\infty$.
\begin{lemma}
    \label{le:cvg_lambar}
     Assume $c_b=0$. Then there exists $l_\infty>0$ such that
\begin{equation}
    \label{eq:lambda+Ci}
    \lambda_{c_i,0}(\B)-c_i\to l_\infty>0 \text{ as }c_i\to-\infty.
\end{equation}
\end{lemma}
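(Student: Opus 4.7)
The plan is to rewrite $\lambar(\B) = \lambda_{c_i,0}(\B)-c_i$ as a Rayleigh quotient in which a large penalty $c\int_\S v^2$ appears (where $c:=-c_i\to+\infty$), and then to identify its limit as the first Robin eigenvalue of $-\Delta$ on $\B$ with parameter $\beta=1$. Starting from the variational formulation \eqref{eq:lambda_variat} with $c_b=0$ and adding $-c_i(\int_\B u^2+\int_\S v^2)$ to the numerator yields
\[
\lambar(\B)=\min_{(u,v)\neq(0,0)} R_c(u,v), \quad R_c(u,v):=\frac{\int_\B|\nabla u|^2+\int_\S|\nabsig v|^2+\int_\S(u-v)^2+c\int_\S v^2}{\int_\B u^2+\int_\S v^2}.
\]
I denote by $l_\infty$ the first Robin eigenvalue of $-\Delta$ on $\B$ with parameter $\beta=1$, namely
\[
l_\infty:=\min_{u\not\equiv 0}\frac{\int_\B|\nabla u|^2+\int_\S u^2}{\int_\B u^2},
\]
which is strictly positive by standard spectral theory (a minimizer cannot be constant since $|\S|>0$).

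For the upper bound, I would test $R_c$ against $(u_\star, 0)$ where $u_\star$ is a first Robin eigenfunction for $\beta=1$: this immediately yields $R_c(u_\star,0)=l_\infty$, and hence $\limsup_{c\to+\infty}\lambar(\B)\leq l_\infty$. The lower bound is the heart of the argument. Applying Young's inequality to decouple the cross term,
\[
\int_\S(u-v)^2+c\int_\S v^2\geq(1-\epsilon)\int_\S u^2+\bigl(c+1-\tfrac{1}{\epsilon}\bigr)\int_\S v^2 \quad \text{for any } \epsilon\in(0,1),
\]
and picking $\epsilon_c:=(c+1)^{-1/2}\to 0$ so that $\gamma_c:=c+1-(c+1)^{1/2}\to+\infty$, I obtain a lower Rayleigh quotient in which $u$ and $v$ are fully decoupled. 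Combining the Robin Rayleigh bound $\int_\B|\nabla u|^2+(1-\epsilon_c)\int_\S u^2\geq\lambda_1^{1-\epsilon_c}\int_\B u^2$ with the elementary inequality $(Ax+By)/(x+y)\geq\min(A,B)$ for $x,y\geq 0$ gives
\[
\lambar(\B)\geq\min\bigl(\lambda_1^{1-\epsilon_c},\gamma_c\bigr),
\]
where $\lambda_1^\beta$ stands for the first Robin eigenvalue of $-\Delta$ on $\B$ with parameter $\beta>0$.

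Letting $c\to+\infty$ then finishes the proof: $\gamma_c\to+\infty$, while $\lambda_1^{1-\epsilon_c}\to\lambda_1^1=l_\infty$ by continuity of the Robin eigenvalue in its boundary parameter (itself an easy consequence of a two-sided test-function argument in the Rayleigh quotient). Thus $\liminf\lambar(\B)\geq l_\infty$, which matches the upper bound and yields $\lambar(\B)\to l_\infty>0$. I do not anticipate any serious obstacle; the whole argument is a soft penalization/decoupling scheme, and the analytical ingredients (Young's inequality, Rayleigh quotient comparison, continuity of Robin eigenvalues in $\beta$) are elementary. The only mild care required is to balance the choice $\epsilon_c=(c+1)^{-1/2}$ so that simultaneously $\epsilon_c\to 0$ and $c+1-1/\epsilon_c\to+\infty$.
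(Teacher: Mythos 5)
Your proof is correct, and it takes a genuinely different (and more informative) route than the paper's. The paper's argument is purely monotonicity-based: writing $l_{c_i}:=\lambda_{c_i,0}(\B)-c_i$ as a Rayleigh minimum with a $-c_i\int_\S V^2$ term in the numerator, the paper observes that $l$ is non-increasing in $c_i$, proves $l_{-1}>0$ by a constant-competitor argument, and concludes that the monotone limit $l_\infty:=\lim_{c_i\to-\infty}l_{c_i}$ exists and is positive. (Strictly speaking, the paper's proof as written only exhibits a \emph{lower} bound together with monotonicity; to get a \emph{finite} limit one still needs the trivial upper bound $l_{c_i}\leq R_c(u_\star,0)=\lambda_1^1$, which the paper leaves implicit and relegates, without proof, to Remark \ref{rk:robin_linfty}.) Your penalization argument goes further: by decoupling the cross term with Young's inequality, choosing $\epsilon_c=(c+1)^{-1/2}$, and comparing with the pure Robin Rayleigh quotient via $(Ax+By)/(x+y)\geq\min(A,B)$, you prove the two-sided estimate $\min\bigl(\lambda_1^{1-\epsilon_c},\gamma_c\bigr)\leq l_{c_i}\leq\lambda_1^1$ and actually \emph{identify} the limit as $l_\infty=\lambda_1^1$, the first Robin eigenvalue of $-\Delta$ on $\B$ with parameter $\beta=1$. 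In effect you are proving both Lemma \ref{le:cvg_lambar} and the content of Remark \ref{rk:robin_linfty} at the same time. The paper's monotonicity approach is slightly shorter and requires fewer tools, but yours pins down the limit quantitatively and closes a gap the paper leaves open.
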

\begin{proof}
   We denote more simply $l_{c_i}:=\lambda_{c_i,0}(\B)-c_i$.
First note that 
\[l_{c_i}=\min_{\substack{(U,V)\in {W^{1,2}}(\B)\times {W^{1,2}}(\S) \\ (U,V)\ne(0,0)}}\frac{\int_\B |\nabla U|^2+\int_\S |\nabsig V|^2+\int_\S (U-V)^2-c_i\int_\S V^2}{\int_\B U^2+\int_\S V^2}\]
is non-increasing in $c_i$. But since $c_i\leq0$, for any $(U,V)\in {W^{1,2}}(\B)\times {W^{1,2}}(\S)$ there holds
\[
\frac{\int_\B |\nabla U|^2+\int_\S |\nabsig V|^2+\int_\S (U-V)^2-c_i\int_\S V^2}{\int_\B U^2+\int_\S V^2}\geq 0
\]
with equality only if $U$ and $V$ are constants and $U_{|\S}=V=0$, which does not happen for the eigencouple $(U,V)=(u,v)$ since $u>0$ inside $\B$. In particular, we deduce $l_{-1}>0$.
Consequently, $l$ is non-increasing in $c_i$, and for each $c_i\leq-1$
\[
0<l_{-1}\leq l_{c_i}.
\]
This ensures \eqref{eq:lambda+Ci} and thus finishes the proof.
\end{proof}

\begin{remark}\label{rk:robin_linfty}
    Pushing further the analysis one can show that the problem corresponding to $l_{c_i}$ converges to $l_\infty=\lambda_R$ where $\lambda_R$ is the Robin eigenvalue defined by
    \[
    \lambda_R:=\inf_{u\in {W^{1,2}}(\Om), u\ne0}\frac{\int_\Om |\n u|^2+\int_\Sigma u^2}{\int_\Om u^2}.
    \]
\end{remark}

The second lemma gives some bound for the numbers $p_k:=p_k(1)$; recall (see \eqref{eq:def_pk}) that $p_k$ solves 
    \begin{equation*}
    \begin{cases}
        -\frac{1}{r^{d-1}}\frac{d}{dr}\left(r^{d-1}\frac{dp_k}{dr}\right)+\left(\frac{\sigma_k}{r^2}-\bar\lambda\right)p_k=0, \text{ in } (0,1),\\
        p_k'(1)+d_kp_k(1)=-d_k\partial_\nu u-\partial_{\nu\nu} u,
    \end{cases}
    \end{equation*}
    with $d_k:=\left(1-\frac{1}{\sigma_k-\tilde\lambda}\right)>0$. Recall also from Corollary \ref{cor:critpt} that the eigenfunction $u:\B\to\R$ is radial and positive. With a slight abuse of notation we still write $u:[0,1]\to\R$ the corresponding one-dimensional function. 
    
    \begin{lemma}\label{Le:Comparison}
     For any $c_i,c_b\in\R$ we have $p_1=-\frac{d}{dr}u$. Assuming furthermore that $c_b=0$, then
\begin{itemize}
    \item There exists $C>0$ such that for any $c_i<0$ it holds
     \[p_1>0, \text{ and for all } k\in \N^*, \ -C\leq p_k\leq p_1 \text{ in } (0,1).\]
     \item For any $c_i>0$, there exists $C=C(c_i)>0$ such that 
     \[
    \text{for all } k\in\N^*, \ |p_k|\leq C \text{ in }(0,1).\]
\end{itemize}
\end{lemma}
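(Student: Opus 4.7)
I would begin with the identity $p_1 = -u'$, which is valid for any $c_i, c_b \in \R$. Differentiating the radial equation $u'' + \tfrac{d-1}{r} u' + \bar\lambda u = 0$ in $r$ shows that $u'$ solves the same second-order linear ODE as $p_1$ (the coefficient $(d-1)/r^2$ produced by the differentiation coincides with $\sigma_1/r^2$), and regularity at the origin forces $p_1 = c u'$ for some $c \in \R$; the boundary condition at $r=1$, combined with the algebraic identity $u''(1) = -H u'(1) - \bar\lambda u(1)$ obtained by evaluating the ODE of $u$ at $r=1$, then pins down $c = -1$. For the positivity of $p_1$ in the regime $c_b = 0$, $c_i < 0$, Proposition~\ref{prop:lambda_cicb_est} gives $\bar\lambda = \lambda_{c_i, 0}(\B) - c_i > 0$; rewriting $-\Delta u = \bar\lambda u$ in the form $(r^{d-1} u')' = -\bar\lambda r^{d-1} u$ and integrating on $[0, r]$ yields $u' < 0$ on $(0, 1]$ (since $u > 0$), hence $p_1 > 0$.

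For the upper bound $p_k \leq p_1$, using $p_1 > 0$ I would set $\phi_k := p_k / p_1$ on $(0, 1]$, with $\phi_k(0) = 0$ for $k \geq 2$ thanks to the $r^k$ behaviour of $p_k$ near the origin. Substituting $p_k = \phi_k p_1$ into the ODE for $p_k$ and using the ODE for $p_1$ produces the Sturm-Liouville form
\[
(p_1^2 r^{d-1} \phi_k')' = (\sigma_k - \sigma_1) r^{d-3} p_1^2 \phi_k.
\]
Testing against $\phi_k$ and integrating by parts on $(0, 1)$ (the boundary term at $r=0$ vanishes) gives the energy identity
\[
p_1(1)^2 \phi_k(1) \phi_k'(1) = \int_0^1 p_1^2 r^{d-1} (\phi_k')^2 \, dr + \int_0^1 (\sigma_k - \sigma_1) r^{d-3} p_1^2 \phi_k^2 \, dr \geq 0,
\]
and combining this sign constraint with the linear relation between $\phi_k(1)$ and $\phi_k'(1)$ obtained by subtracting the boundary conditions of $p_k$ and $p_1$ at $r=1$ should force $\phi_k(1) \leq 1$. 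A Sturm-Liouville comparison (or, equivalently, a maximum principle on $w = p_1 - p_k$ restricted to the positivity set of $p_k$) then extends $\phi_k \leq 1$ to all of $(0, 1)$.

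For the lower bound $p_k \geq -C$ uniform in $c_i < 0$, I would rely on a pointwise ODE argument: at any interior negative minimum $r_0 \in (0, 1)$ of $p_k$, one has $p_k'(r_0) = 0$ and $p_k''(r_0) \geq 0$, so the ODE \eqref{eq:def_pk} yields $(\sigma_k/r_0^2 - \bar\lambda) p_k(r_0) \geq 0$; combined with $p_k(r_0) < 0$ and $r_0 < 1$, this forces $\sigma_k < \bar\lambda$. Since Proposition~\ref{prop:lambda_cicb_est} and Lemma~\ref{le:cvg_lambar} imply that $\bar\lambda$ remains bounded uniformly on $c_i \in (-\infty, 0)$, only finitely many $k$ (below a uniform threshold) can host such an interior negative minimum, and for those a direct ODE estimate yields the bound; for all other $k$ one has $p_k \geq \min(0, p_k(1))$, and $|p_k(1)|$ is controlled uniformly via the boundary condition and standard elliptic estimates on the principal eigencouple. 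The case $c_i > 0$, $c_b = 0$ is easier: Proposition~\ref{prop:lambda_cicb_est} gives $0 < \lambda_{c_i, 0}(\B) < c_i$, so $\bar\lambda < 0$ and the full coefficient $\sigma_k/r^2 - \bar\lambda$ is strictly positive in the ODE, yielding a strong maximum principle that directly delivers $|p_k| \leq C(c_i)$ uniformly in $k$.

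The step I expect to be the main obstacle is the boundary value comparison $\phi_k(1) \leq 1$ in the upper bound: the energy identity only fixes the sign of the product $\phi_k(1)\phi_k'(1)$, while the linear relation between these two quantities has a coefficient of the form $d_k - H + \bar\lambda u(1)/(-u'(1))$ whose sign depends intricately on $k$, the dimension $d$, and the asymptotic behaviour of $\bar\lambda$ from Lemma~\ref{le:cvg_lambar}; a careful case split---possibly exploiting the monotonicity of $d_k$ in $k$ and the precise estimate on $\bar\lambda$---is likely required to finally pin down $\phi_k(1) \leq 1$.
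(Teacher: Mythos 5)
Your identity $p_1=-\frac{d}{dr}u$ and the positivity $p_1>0$ are handled correctly; in fact your direct integration argument (from $(r^{d-1}u')'=-\bar\lambda r^{d-1}u<0$ and $u'(0)=0$) is cleaner than the change of variables $z_1=p_1/u$ used in the paper. But for the central comparison $p_k\leq p_1$ there is a genuine gap, and you already flag it yourself: the Wronskian/energy identity for $\phi_k=p_k/p_1$ gives $p_1(1)^2\phi_k(1)\phi_k'(1)\geq 0$, and the Robin relation $s_k'(1)+d_k s_k(1)=0$ translates into $\phi_k'(1)p_1(1)=(1-\phi_k(1))\bigl(p_1'(1)+d_kp_1(1)\bigr)$. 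Combining the two forces $\phi_k(1)\leq 1$ only if $A:=p_1'(1)+d_kp_1(1)>0$. Expanding, $A=\bar\lambda\,u(1)-(H-d_k)p_1(1)$, and in the regime $c_i\to-\infty$ with $\lambda\to-\infty$, $\bar\lambda\to l_\infty$ this is $\sim v\lambda\,(H-d_k-l_\infty)$, whose sign depends on $d$ and $l_\infty$ (indeed precisely the combination that later forces the restriction $d\leq 5$ in Theorem~\ref{thm:local_ball}). So your route does not close unconditionally, and the case split you anticipate would be the bulk of the work.

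The paper avoids this entirely by dividing the \emph{difference} $s_k:=p_k-p_1$ by the eigenfunction $u$ rather than dividing $p_k$ by $p_1$. The ODE for $w_k:=s_k/u$ keeps a good-sign zeroth-order term $-\sigma_k/r^2$ plus a drift $2u'w_k'/u$, and crucially the boundary condition becomes $w_k'(1)+\bigl(d_k+\tfrac{u'}{u}(1)\bigr)w_k(1)=0$ with
\[
d_k+\frac{u'}{u}(1)=\frac{1}{-\tilde\lambda}\left(1-\frac{-\tilde\lambda}{\sigma_k-\tilde\lambda}\right)>0
\]
\emph{unconditionally}, so a pure maximum-principle argument gives $w_k\leq0$ with no case analysis. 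Your sketch of the lower bound $p_k\geq -C$ has a similar softness: the interior-minimum dichotomy only reduces to finitely many $k$, and you say the remaining boundary control follows from ``standard elliptic estimates'' without spelling it out; the paper instead builds the explicit barrier $j_0(r)=r^2$, checks the differential inequality and the Robin sign for $p_k+Cj_0$ (with $C$ uniform in $c_i<0$ thanks to the boundedness of $\bar\lambda$ from Lemma~\ref{le:cvg_lambar}), and concludes by the same maximum principle. I recommend you adopt the $s_k/u$ substitution for the upper bound and the barrier for the lower bound; the positivity argument for $p_1$ you gave can be kept.
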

 
    \begin{proof}[Proof of Lemma \ref{Le:Comparison}]
    In the proof of the different points, we will use several times the computation of
    \[-\frac{1}{r^{d-1}}\frac{d}{dr}\left(r^{d-1}\frac{d}{dr}\left(\frac{a}{u}\right)\right)
    \]
    when $a:[0,1]\to\R$ is a given one-dimensional function (recall that $u>0$ in $(0,1)$). We now use a technique from \cite{Maz20} to study the second-order shape derivative. Denoting $b:=\frac{a}{u}$, one has in $(0,1)$:
    \begin{align}
    -\frac{1}{r^{d-1}}(r^{d-1}b')'&=-\frac{1}{r^{d-1}}\left(r^{d-1}\frac{a'}{u}\right)'+\frac{1}{r^{d-1}}\left(r^{d-1}\frac{u'}{u^2}a\right)'\nonumber\\
    &= -\frac{1}{r^{d-1}u}(r^{d-1}a')'+\frac{a'u'}{u^2}+\frac{1}{r^{d-1}}(r^{d-1}u')'\frac{a}{u^2}+u'\left(\frac{b}{u}\right)'\nonumber\\
    &=-\frac{1}{r^{d-1}u}(r^{d-1}a')'+\frac{1}{r^{d-1}}(r^{d-1}u')'\frac{a}{u^2}+\frac{u'}{u}\left(\frac{a'}{u}+b'-b\frac{u'}{u}\right)\nonumber\\
    &= -\frac{1}{r^{d-1}u}(r^{d-1}a')'+\frac{1}{r^{d-1}}(r^{d-1}u')'\frac{b}{u}+2\frac{u'b'}{u}\nonumber\\
    &= -\frac{1}{r^{d-1}u}(r^{d-1}a')'-\lambar b+2\frac{u'b'}{u}
    \label{eq:elliptic_a/u}
\end{align}
where we used \eqref{eq:principalegf_syst} in the last line to say that in radial coordinates the eigenfunction $u$ satisfies 
    \begin{equation}\label{eq:u_syst_rad}
    \begin{cases}
    -\frac{1}{r^{d-1}}\frac{d}{dr}\left(r^{d-1}\frac{du}{dr}\right)=\bar\lambda u\,, 
    \\ u'(1)+u(1)=v.
    \end{cases}    
    \end{equation}
    
    \textbf{Proof of $p_1=–\frac{d}{dr}u$.} 
    Letting $\varphi:=-\frac{du}{dr}$ we deduce by differentiating the equation in \eqref{eq:u_syst_rad} that $\varphi$ satisfies
    \[ -\frac{d^2\p}{dr^2}-\frac{d-1}{r}\frac{d\varphi}{dr}+\frac{d-1}{r^2}\varphi=\bar\lambda\varphi \text{ in }(0,1),\] which rewrites
    \[
    -\frac{1}{r^{d-1}}\frac{d}{dr}\left(r^{d-1}\frac{d\varphi}{dr}\right)+\left(\frac{\sigma_1}{r^2}-\lambar\right)\varphi=0.
    \] 
Also,
\[
\p'(1)+d_1\p(1)=-\frac{d^2}{dr^2}u(1)-d_1\frac{d}{dr}u=-d_1\parnu u-\parnunu u
\]
so that by uniqueness of the solution to \eqref{eq:def_pk} we have $\p=p_1$.

\textbf{Proof of $p_1\geq0$ when $c_i<0$.} Denoting $z_1:=\frac{p_1}{u}$, and using the computations \eqref{eq:elliptic_a/u} it holds
\begin{align*}
-\frac{1}{r^{d-1}}\left(r^{d-1}z_1'\right)'&= 
-\frac{1}{r^{d-1}u}(r^{d-1}p_1')'-\lambar z_1+2\frac{u'z_1'}{u}\\
&= -\frac{\sigma_1}{r^2}z_1+2\frac{u'z_1'}{u} \text{ on }(0,1). 
\end{align*}
Moreover, as $p_1=-\frac{du}{dr}$, it holds 
\[
p_1'(1)+Hp_1(1)=-\parnunu u-H\parnu u=-\Delta u=\lambar u>0,
\]
thanks to the decomposition of the Laplacian on $\S$ (see \eqref{eq:lap_tangential}), and the fact that $\lambar>0$ (see Proposition \ref{prop:lambda_cicb_est}). As
\[z_1'(1)+Hz_1(1)=\frac{1}{u(1)}\left(p_1'(1)+p_1(1)\right)-\frac{u'}{u}(1)z_1(1)
\]
this yields
\[
z_1'(1)+\left(H+\frac{u'}{u}(1)\right)z_1(1)=\frac{1}{u(1)}\left(p_1'(1)+Hp_1(1)\right)>0,
\]
where 
\[H+\frac{u'}{u}(1)=H+\frac{1+\lamti}{-\lamti}\geq\frac{1}{-\lamti}>0\]
where we also used Corollary \ref{cor:critpt} and Proposition \ref{prop:lambda_cicb_est}. 
This implies that $z_1\geq0$ in $[0,1]$: in fact, $z_1$ cannot reach an interior negative minimum at $r_0\in(0,1)$, as otherwise
\[
-\frac{1}{r^{d-1}}(r^{d-1}z_1')'(r_0)+\frac{\sigma_1}{r_0^2}z_1(r_0)<0, \text{ while } 2\frac{u'z_1'}{u}(r_0)=0.
\]
But $z_1$ cannot reach a negative minimum at the boundary, since this would imply
\[
0\geq z_1'(1)> -\left(H+\frac{u'}{u}(1)\right)z_1(1)>0.
\]
As a consequence, we get that $z_1\geq0$ in $(0,1)$, whence also $p_1\geq0$ since $u$ is positive.


\textbf{Proof of $p_k\leq p_1$ when $c_i<0$.} Introduce, for any $k> 1$, the function $s_k:=p_k-p_1$. We have that 
\begin{equation}
    \label{eq:ineqs_k}
    -\frac{1}{r^{d-1}}\frac{d}{dr}\left(r^{d-1}\frac{ds_k}{dr}\right)+\left(\frac{\sigma_k}{r^2}-\bar\lambda\right)s_k=\frac{\sigma_1-\sigma_k}{r^2}p_1\leq0\text{ in }(0,1),\end{equation} 
since $p_1\geq$. Furthermore, using that $p_1(1)=-\frac{d}{dr}u(1)=-\parnu u$ and the boundary condition for $p_k$, then 
\begin{align*}
    s_k'(1)+d_ks_k(1)&=d_kp_1(1)+p_1'(1)-(p_1'(1)+d_kp_1(1)),\\
    &=0\end{align*} so that, eventually, $s_k$ verifies 
\[\begin{cases}
-\frac{1}{r^{d-1}}\frac{d}{dr}\left(r^{d-1}\frac{ds_k}{dr}\right)+\left(\frac{\sigma_k}{r^2}-\bar\lambda\right)s_k\leq0&\text{ in }(0,1),
\\ s_k'(1)+d_ks_k(1)=0.
\end{cases}\]
We now show this implies that $s_k\leq0$ for each $k\in\N^*$. Setting
$w_k:=\frac{s_k}{u}$ in $(0,1)$, we have thanks to \eqref{eq:elliptic_a/u} and \eqref{eq:ineqs_k} that
\begin{equation}
    \nonumber
-\frac{1}{r^{d-1}}(r^{d-1}w_k')'\leq -\frac{\sigma_k}{r^2}w_k+2\frac{u'w_k'}{u} \text{ on }(0,1).
\end{equation}

On the other hand, using the boundary condition for $s_k$ we can write $w_k'(1)+d_kw_k(1)=-s_k(1)\frac{u'}{u^2}(1)$, so that
\[
w_k'(1)+\left(d_k+\frac{u'}{u}(1)\right)w_k(1)=0,
\]
where (using Corollary \ref{cor:critpt} and Proposition \ref{prop:lambda_cicb_est})
\begin{align*}
d_k+\frac{u'}{u}(1)&=1-\frac{1}{\sigma_k-\lamti}+\frac{1+\lamti}{-\lamti}\\
&=\frac{1}{-\lamti}\left(1-\frac{-\lamti}{\sigma_k-\lamti}\right)>0.
\end{align*} 
Working from the inequation verified by $w_k$ and its boundary condition, the same maximum principle argument as the one above for showing $p_1\geq$ leads to $w_k\leq0$, hence to $p_k\leq p_1$ in $(0,1)$.
\textbf{Proof of the bounds on $p_k$.}
Observe that 
\[
        -\frac{1}{r^{d-1}}\frac{d}{dr}\left(r^{d-1}\frac{dp_k}{dr}\right)+\left(\frac{\sigma_k}{r^2}-\bar\lambda\right)p_k=0\]
        
        while \[p_k'(1)+d_kp_k(1)\]  is bounded in $k\geq2$.

        Letting $j_0(r):=r^2$, it holds
        \[
       \left( -\frac{1}{r^{d-1}}\frac{d}{dr}\left(r^{d-1}\frac{d}{dr}\right)+\frac{\sigma_k}{r^2}-\bar\lambda\right)j_0=-2d+\sigma_k-\lambar r^2
        \]
        Noting that $\sigma_k\to+\infty$, if $c_i>0$ one can find $C>0$ and $k_0\in\N$ such that for any $k\geq k_0$
\[
\begin{cases}
    \left( -\frac{1}{r^{d-1}}\frac{d}{dr}\left(r^{d-1}\frac{d}{dr}\right)+\frac{\sigma_k}{r^2}-\bar\lambda\right)(p_k+Cj_0)=C(\sigma_k-2d-\lambar r^2)>0\text{ in } (0,1),\\
    (p_k+Cj_0)'(1)+d_k(p_k+Cj_0)(1)>0.
\end{cases}
\]
The same can be done for $c_i<0$ with in addition a constant $C>0$ independent of $c_i$, since thanks to Lemma \ref{le:cvg_lambar} we have $0<\lambar\leq l_\infty$ for any $c_i<0$.

We can thus perform the same maximum principle argument as before to deduce that $p_k+Cj_0\geq0$ in $(0,1)$ for $k\geq k_0$, thus getting the existence of some $C'>0$ (independent of $c_i$ in the case $c_i<0$) such that $p_k\geq -C'$ for all $k\geq1$. The upper bound for the $c_i>0$ case is proven likewise, by considering instead $p_k-Cj_0$. The conclusion follows.
\end{proof}

We are now ready to prove the main result of this section. 

\begin{proof}[Proof of Theorem \ref{thm:local_ball}] The fact that the ball is a critical point of the Lagrangian was proven in Corollary \ref{cor:critpt}. We are thus left with dealing with the second-order optimality condition for the two points, which we treat separately.

Note that we will only study the second-order optimality condition for the fields $\th=h\nu_\B$, where $h\in\C^{2,\eta}(\S)$. This is not restrictive with respect to a general field $\th\in\C^{2,\eta}(\R^d,\R^d)$, as the so-called structure theorem (see \cite[Theorem 5.9.2]{HP18}) yields for all $\th\in\C^{2,\eta}(\R^d,\R^d)$
\begin{equation}
    \label{eq:structure_2nd_d}
\Lm_\mu''(\B)\cdot(\th,\th)=
\frac{d^2}{dt^2}\Big(\Lm_\mu\left((\text{Id}+t\th)(\B)\right)\Big)_{t=0}=\Lm_\mu''(\B)\cdot(h,h)
\end{equation}
where $h:=(\th_\nu)_{|\S}\in \C^{2,\eta}(\S)$ (the notation $\Lm_\mu''(\B)\cdot(h,h)$ was introduced in \eqref{eq:notation_second_lmu}), since $\B$ is a critical point of $\Lm_\mu$ and $\Lm_\mu$ is twice Fréchet differentiable (see Proposition \ref{prop:diff_egf}).

\textbf{Proof of Theorem \ref{thm:local_ball}, item \eqref{it:notlocalmin}.}

 By considering $\lambda_{c_i,c_b}+c_b$ instead of $\lambda_{c_i,c_b}$, there is no loss of generality in assuming that $c_b:=0$. \BBB
    We are going to show that in this case one has
    \begin{equation}
        \label{eq:beta_neg}
        \beta<0,
    \end{equation}
    in the notations of Proposition \ref{prop:second_shaped}. Note that this is enough to get the conclusion: in fact, by Lemma \ref{Le:Comparison} the function $p_k$ is bounded uniformly in $k$, so that the sequence defined by
    \begin{equation*}
    \left(\frac{\gamma}{\sigma_k-\lamti}+\delta\right)\left(p_k+\parnu u\right)
    \end{equation*}
    is bounded in $k$. On the other hand, recalling \eqref{eq:sigmak_harmonic} we know that $\sigma_k\sim k^2$. These two pieces of information together with the diagonalised expression of the Lagrangian \eqref{eq:lmu_diag_exp} ensure the existence of a couple $(k,\ell)$  (with $k$ sufficiently large) such that 
    \[
    \int_\S Y^k_\ell =0 \text{ while }
    \Lm_{c_i,c_b,\mu}''(\B)\cdot(Y^k_\ell ,Y^k_\ell )<0.
    \]
    Setting $h:=Y^k_\ell $, this is enough to conclude.
    
    We are thus left with proving \eqref{eq:beta_neg}. We have 
    \[\beta=-u(\parnu u),\]
    with $u>0$ over $\S$, as it is a principal eigenfunction. 
    Thanks to Corollary \ref{cor:critpt}:
    \[\parnu u=v\lambda_{c_i,0}(\B),\] 
which is positive by Proposition \ref{prop:lambda_cicb_est}, thus
yielding the announced \eqref{eq:beta_neg}.

\textbf{Proof of Theorem \ref{thm:local_ball}, item \eqref{it:localmin}.}

Recall the expression of the diagonalised second-order shape derivative of the Lagrangian, given by \eqref{eq:lmu_diag_exp}:
\[\Lm_{c_i,c_b,\mu}''(\B)\cdot(h,h)=\sum_{k=1}^{+\infty}\sum_{\ell=1}^{L_k} \left(\beta(\sigma_k-H)+\left(\frac{\gamma}{\sigma_k-\lamti}+\delta\right)\left(p_k+\parnu u\right)\right)h_{k,\ell}^2.\]
Since $p_1+\parnu u=0$ by Lemma \ref{Le:Comparison} and moreover $\sigma_1=H$, the coefficient corresponding to the spherical harmonics of degree $1$ cancels, giving
\[\Lm_{c_i,c_b,\mu}''(\B)\cdot(h,h)=\sum_{k=2}^{+\infty}\sum_{\ell=1}^{L_k} \left(\beta(\sigma_k-H)+\left(\frac{\gamma}{\sigma_k-\lamti}+\delta\right)\left(p_k+\parnu u\right)\right)h_{k,\ell}^2.\]
We now prove that in the regime $c_i\ll -c_b$ then
\begin{equation}
    \label{eq:2dLag_neg}
\forall k\in\N\setminus\{0,1\},\ 
\beta(\sigma_k-1)+\left(\frac{\gamma}{\sigma_k-\lamti}+\delta\right)\left(p_k+\parnu u\right)>0,
\end{equation}
which is enough to yield the conclusion.

We thus assume that $c_i+c_b\to-\infty$.  Again, one can assume without loss of generality that $c_b:=0$ while $c_i\to-\infty$. 
Thanks to Lemma \ref{le:cvg_lambar}, we have
\begin{equation*}
    \lambda_{c_i}-c_i\to l_\infty>0 \text{ as }c_i\to-\infty,
\end{equation*}
for some $l_\infty>0$, by setting $\lambda_{c_i}:=\lambda_{c_i,0}(\B)$.
We prove \eqref{eq:2dLag_neg} by performing an asymptotic analysis of the coefficients. 
To start with, by Proposition \ref{prop:lambda_cicb_est} and Corollary \ref{cor:critpt}:
\[
\parnu u=v\lambda_{c_i}<0, u_{|\S}=v(1-\lambda_{c_i})>0
\]
so that recalling $\beta=-u\parnu u$,
\begin{align}\nonumber
\forall k\in\N\setminus\{0,1\},\ \beta(\sigma_k-H)&\geq v^2\lambda_{c_i}(\lambda_{c_i}-1)(\sigma_2-H)\\
&=v^2\lambda_{c_i}^2(d+1+o(1)).
\label{eq:beta_k2_pos}
\end{align}
 On the other hand, for each $k\geq2$,
\[
0\leq \frac{\gamma}{\sigma_k-\lamti}\leq\frac{\gamma}{\sigma_2-\lamti}
=\frac{-2v\lambda_{c_i}}{\sigma_2+1-v\lambda_{c_i}}\to2 \text{ when } c_i\to-\infty,
\]
using that $\lambda_{c_i}\to-\infty$ by \eqref{eq:lambda+Ci}. Moreover,
\begin{align*}
\delta&=-2v\left((H-1)\lambda_{c_i}+(\lambda_{c_i}-c_i)(1-\lambda_{c_i})\right)\\
&=-2v\lambda_{c_i}\left((d-2)-l_\infty+o(1)\right),
\end{align*}
 if $(d-2)-l_\infty\ne0$, using \eqref{eq:lambda+Ci} again. Finally, 
\[ \forall k\in\N\setminus\{0,1\},\ 
-C+v\lambda_{c_i}\leq p_k+\parnu u\leq p_1+v\lambda_{c_i},
\]
where the constant $C>0$ is taken from Lemma \ref{Le:Comparison}.
We thus have for all $k\in\N, k\geq2$ and $c_i\ll0$

\begin{align}\nonumber
    \left|\frac{\gamma}{\sigma_k-\lamti}+\delta\right|\left|p_k+\parnu u\right|&\leq\left(\frac{\gamma}{\sigma_2-\lamti}+|\delta|\right)|C'+v\lambda_{c_i}|\\
    &=2v^2\lambda_{c_i}^2|(d-2)-l_\infty+o(1)|\label{eq:est_gammadelta},
\end{align}
if  $(d-2)-l_\infty\ne0$. If $(d-2)-l_\infty=0$, we first find $\delta=-2vl_\infty(1+o(1))$ instead, leading to the alternative bound 
\begin{equation}
    \label{eq:est_gammadelta_altern}
\left|\frac{\gamma}{\sigma_k-\lamti}+\delta\right|\left|p_k+\parnu u\right|\leq Cv^2|\lambda_{c_i}|\end{equation} for a constant $C>0$.

As $2\leq d\leq5$, one has 
\[
2\left((d-2)-l_\infty\right)<d+1
\]
since $l_\infty>0$. Combining \eqref{eq:beta_k2_pos} and either \eqref{eq:est_gammadelta} or \eqref{eq:est_gammadelta_altern}, we thus get in any case that there exists $C>0$ such that if $c_i\leq-C$ then \eqref{eq:2dLag_neg} holds true, hence concluding the proof.
\end{proof}

\begin{remark}\label{rk:high_dim}
    The low dimension hypothesis is only used at the very end of the proof of the second point, namely for asserting that 
\[
2\left((d-2)-l_\infty\right)<d+1,
\]
which stops being the simple consequence of the positivity of $l_\infty$ in dimension $d\geq6$. 
\end{remark}

\bibliographystyle{abbrv}
\bibliography{biblio.bib}

\appendix
\section{Differentiability of the eigenelements}\label{Ap:Differentiability}

This part is devoted to differentiability of the eigenelements from \eqref{eq:system_egv}. Let us be more precise: letting, for a  $\C^{2,\eta}$ \BBB bounded open set $\Om$ and $\th\in\C^{2,\eta}(\R^d,\R^d)$ with $\|\th\|_{W^{1,\infty}(\R^d,\R^d)}<1$, $\phi_\th$ be the diffeomorphism $\phi_\th:=\text{Id}+\th$, we prove that the map
\begin{equation}
    \label{eq:diff_map}
\theta\in\C^{2,\eta}(\R^d,\R^d)\mapsto (u_\th\circ\phi_\th,v_\th\circ\phi_\th,\lambda_\th)\in\C^{2,\eta}(\ov{\Om})\times\C^{2,\eta}(\Sigma)\times\R
\end{equation}
is $\C^\infty$ around $\th=0$, where $(u_\th,v_\th,\lambda_\th)$ are the normalized eigenelements associated to $\Om_\th=(\text{Id}+\th)(\Om)$ (see \eqref{eq:principalegf_syst}), \textit{i.e.} they satisfy
\begin{equation*}
\begin{cases}
    -\Delta u_\th=\lambar_\th u_\th & \text{in }\Omega_\th,\\
    -\Delta_{\Sigma_\th} v_\th=\lamti_\th v_\th+u_\th &\text{over }\Sigma_\th,\\
    \partial_{\nu_\th} u_\th+u_\th=v_\th &\text{over }\Sigma_\th,\\
    u_\th>0 \text{ in }\Om_\th,\ v_\th>0 \text{ over }\Sigma_\th,\\
     \int_{\Om_\th} u_\th^2+\int_{\Sigma_\th} v_\th^2=1,
\end{cases}
\end{equation*}
where $\lambar_\th:=\lambar(\Om_\th)$ and $\lamti_\th:=\lamti(\Om_\th)$ (themselves defined in \eqref{eq:def_lambdatilde_bar}).
The strategy of proof, based on an application of the implicit function theorem, is quite standard when it comes to proving differentiability of eigenelements. Let us refer to \cite[Theorem 2.2]{DK11} for a prototypical application of this argument. 

The statement is the folowing.

\begin{proposition}\label{prop:diff_egf_circ}
    Let $\Om$ be a  $\C^{2,\eta}$ \BBB bounded open set. Then the mapping defined in \eqref{eq:diff_map} is $\C^\infty$ in a neighborhood of $\th=0$.
\end{proposition}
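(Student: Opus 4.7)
The plan is to reduce to a fixed-domain problem via pullback and apply the implicit function theorem. Setting $U:=u_\th\circ\phi_\th$, $V:=v_\th\circ\phi_\th|_\Sigma$ and $\Lambda:=\lambda_\th$, the three PDEs in \eqref{eq:ut_vt_system} on $\Om_\th,\Sigma_\th$ can be rewritten as equations on the fixed $\Om,\Sigma$ whose coefficients are smooth functions of $D\phi_\th$, $(\mathrm{Id}+D\th)^{-1}$ and $\det D\phi_\th$ — and analogously for the pulled-back tangential Laplacian and outer normal. Together with the pulled-back normalization $\int_\Om U^2 J^\Om_\th+\int_\Sigma V^2 J^\Sigma_\th-1$, this produces a map
\[
F\colon\mathcal{U}\times\C^{2,\eta}(\ov\Om)\times\C^{2,\eta}(\Sigma)\times\R\longrightarrow\C^{0,\eta}(\ov\Om)\times\C^{1,\eta}(\Sigma)\times\C^{0,\eta}(\Sigma)\times\R,
\]
where $\mathcal{U}$ is a neighborhood of $0$ in $\C^{2,\eta}(\R^d,\R^d)$, satisfying $F(0,u,v,\lambda)=0$. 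The Neumann series representation of $(\mathrm{Id}+D\th)^{-1}$ together with the smoothness of composition, multiplication, determinant and trace between the relevant Hölder spaces shows that $F$ is $\C^\infty$.

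The core task is then to verify that the partial differential $L:=D_{(U,V,\Lambda)}F(0,u,v,\lambda)$, which acts by
\[
L(W,Z,\mu)=\begin{pmatrix}-\Delta W-\lambar W-\mu u\\ \partial_\nu W+W-Z\\ -\Delta_\Sigma Z-\lamti Z-W-\mu v\\ 2\int_\Om uW+2\int_\Sigma vZ\end{pmatrix},
\]
is a Banach space isomorphism. The underlying coupled bulk/surface operator $\mathcal{A}-\lambda I$ with homogeneous Robin coupling $\partial_\nu W+W-Z=0$ is Fredholm of index zero on the natural Hölder pair (ADN theory for coupled elliptic systems with mixed boundary conditions); the Krein-Rutman simplicity of $\lambda$ invoked in the paper right after \eqref{eq:system_egv} shows that its kernel is $\spann\{(u,v)\}$, while the dual Krein-Rutman argument produces a strictly positive adjoint principal couple $(\phi^*,\psi^*)$ spanning its cokernel, solving the formal adjoint system with boundary condition $\partial_\nu\phi^*+\phi^*+\psi^*=0$ (read off from an explicit integration by parts).

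For injectivity of $L$, pairing the bulk equation with $\phi^*$ and the surface one with $\psi^*$, integrating by parts and using the adjoint boundary condition yields $\mu\bigl(\int_\Om\phi^*u+\int_\Sigma\psi^*v\bigr)=0$; the factor in parentheses is strictly positive, forcing $\mu=0$, simplicity then gives $(W,Z)\in\spann\{(u,v)\}$, and the normalization line of $L$ kills the remaining constant. For surjectivity, given a target $(f_1,h,f_2,\rho)$ the Fredholm alternative for the system $(\mathcal{A}-\lambda I)(W,Z)=(f_1+\mu u,\,f_2+\mu v)$ with boundary datum $h$ determines $\mu$ uniquely (the compatibility condition being a linear scalar equation with nonzero coefficient), and Schauder theory for the mixed Robin problem in $\Om$ combined with standard estimates for $-\Delta_\Sigma$ on $\Sigma$ delivers a $\C^{2,\eta}$ particular solution, the remaining kernel direction being fixed by the normalization $\rho$. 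The main obstacle I expect is cleanly establishing this joint bulk/surface Fredholm/regularity framework and identifying the correct adjoint boundary condition; everything else is bookkeeping, and the implicit function theorem then concludes.
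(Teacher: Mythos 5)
Your proposal follows the same overall strategy as the paper: pull back to the fixed domain, apply the implicit function theorem, and invert the linearisation via Fredholm theory. The two main structural differences are cosmetic: you keep the coupling $\partial_\nu W+W-Z=0$ as an explicit component of $F$ (with target $\C^{1,\eta}(\Sigma)$), while the paper builds it into the domain Banach space $\mathcal B=\{(u,v):\partial_\nu u+u=v\}$; and you invoke ADN theory for Fredholmness, whereas the paper constructs a compact solution operator $T$ explicitly and bootstraps Schauder regularity for the coupled bulk-surface system (the bootstrap you wave at is precisely what the paper carries out carefully).

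There is, however, one genuine error and one missed simplification, and they are related. Your claimed adjoint boundary condition $\partial_\nu\phi^*+\phi^*+\psi^*=0$ has the wrong sign: integrating by parts against $(\phi,\psi)$ and substituting $\partial_\nu W=Z-W+h$ produces the boundary term $\int_\Sigma W(\partial_\nu\phi+\phi-\psi)$, so the adjoint condition is $\partial_\nu\phi^*+\phi^*-\psi^*=0$, i.e.\ the \emph{same} condition as the original system. This reflects the fact you did not use: the coupled bilinear form from \eqref{Eq:Eigenvalue},
\[
\int_\Om\nabla u\cdot\nabla\hat u+\int_\Sigma\nabla_\Sigma v\cdot\nabla_\Sigma\hat v+c_i\int_\Om u\hat u-c_b\int_\Sigma v\hat v+\int_\Sigma(u-v)(\hat u-\hat v),
\]
is symmetric, so the operator is self-adjoint in $L^2(\Om)\times L^2(\Sigma)$ and the cokernel of the linearisation is spanned by $(u_0,v_0)$ itself. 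There is no need for a separate dual Krein--Rutman argument or a distinct positive couple $(\phi^*,\psi^*)$; the paper uses this directly to write the Fredholm compatibility condition as orthogonality to $(u_0,v_0)$. With the sign corrected your argument closes, but at the cost of re-deriving something that is a one-line observation once the symmetry of the form is noted.
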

\begin{proof} We keep on with the conventions $\Om:=\Om_0$, $\Sigma:=\Sigma_0$. 
Define the Banach space $\mathcal{B}$ 
\[
\mathcal{B}:=\left\{(u,v)\in \C^{2,\eta}(\ov{\Om})\times\C^{2,\eta}(\Sigma),\ \parnu u +u=v \text{ on } \Sigma\right\}.
\]
Let, for $\th\in \C^{2,\eta}(\R^d,\R^d)$, the mapping $F$ be defined on $\C^{2,\eta}(\R^d,\R^d)\times\mathcal{B}\times\R$ by
\[
F(\th,u,v,\lambda)=\left(-L_\th u-J_\th \lambar_\th u,-L_\th^\Sigma v-J_\th^\Sigma(\lamti v+u),\left(\int_\Om J_\th u^2+\int_\Sigma J_\th^\Sigma v^2\right)-1\right),
\]
where we have set 
\[
\begin{cases}
    J_\th:=\text{det}(D\phi_\th),\\
    A_\th:=J_\th D\phi_\th^{-1}D\phi_\th^{-T},
\end{cases}
\ \begin{cases}
    J_\th^\Sigma:=\text{det}(D\phi_\th)\left|D\phi_\th^{-T}\nu_\Sigma\right|,\\
    A_\th^\Sigma:=J_\th^\Sigma D\phi_\th^{-1}D\phi_\th^{-T},\end{cases}
\]
and $L_\th, L_\th^\Sigma$ are the operator  defined by
\[L_\th u:=\text{div}(A_\th\nabla u), \ \
 L_\th^\Sigma v:=(J_\th^\Sigma)^{-1}\text{div}_\Sigma\left(A_\th^\Sigma\nabla v-\frac{\langle A_\th^\Sigma\nabla v,\nu_\Sigma\rangle}{\langle A_\th\nu_\Sigma,\nu_\Sigma\rangle}A_\th\nu_\Sigma\right).
\]
Note that the mapping $F$ takes values in $\C^{0,\eta}(\ov{\Om})\times\C^{0,\eta}(\Sigma)\times\R$,  and that is is $\C^\infty$  when $\|\th\|_{W^{1,\infty}(\R^d,\R^d)}<1$.   \BBB
Thanks to \cite[Section 2]{DK11} and \cite[Lemma 3.3]{DKL16}, the operators $L_\th$ and $L_\th^\Sigma$ are the transports of the Laplacian and tangential Laplacian to $\Om$ and $\Sigma$, so that
\[
F(\th,u,v,\lambda)=0  \Longleftrightarrow (u,v,\lambda)=(u_\th\circ\phi_\th,v_\th\circ\phi_\th,\lambda_\th).
\]
As a consequence, if we show that the partial derivative 
\[
\partial_{(u,v,\lambda)}F(0,u_0,v_0,\lambda_0) \text{ is invertible},
\]
where $(u_0,v_0,\lambda_0)$ denote the eigenelements on $\Om_0=\Om$,
then the implicit function theorem implies that \eqref{eq:diff_map} is $\C^\infty$ around the origin.

First, observe that this derivative is given for any $(\hat{u},\hat{v},\hat{\lambda})\in\mathcal{B}\times\R$ by
\begin{align}\label{eq:partial_invert}
\partial_{(u,v,\lambda)}F(0,u_0,v_0,\lambda_0)\cdot(\hat{u},\hat{v},\hat{\lambda})=\left(-\Delta \hat{u}-\lambar_0\hat{u}-\hat{\lambda}u_0,-\Delta_\Sigma \hat{v}-(\lamti_0 \hat{v}+\hat{u})-\hat{\lambda}v_0,\right.\\
\left.2\left(\int_\Om u_0\hat{u}+\int_\Sigma v_0\hat{v}\right)\right).\nonumber
\end{align}
Let $T:\C^{0,\eta}(\ov{\Om})\times\C^{0,\eta}(\Sigma)\to \mathcal{B}$ be defined by $T(f,g)=(u,v)$ where $(u,v)$ is the only solution to 
\[
\begin{cases}
    -\Delta u+c_iu=f&\text{in }\Om,\\
    \parnu u +u=v &\text{over }\Sigma,\\
    -\Delta_\Sigma v+(1-c_b)v-u=g&\text{over }\Sigma.
\end{cases}
\]
\BBB
Then $T$ is well-defined, which can be seen through existence and uniqueness of a minimiser $(u,v)\in W^{1,2}(\Om)\times W^{1,2}(\Sigma)$ to the associated variational problem
\begin{align*}
\min_{(u,v)\in W^{1,2}(\Om)\times W^{1,2}(\Sigma)}\left\{\frac{1}{2}\int_\Omega\Big(|\nabla u|^2+c_i u^2\Big)+\frac{1}{2}\int_\Sigma \Big(|\nabla_\Sigma v|^2+(u-v)^2-c_b v^2\Big)\right.\\
\left.-\int_\Om fu-\int_\Sigma gv\right\},
\end{align*}
combined with elliptic estimates on the above system. For the latter, let us simply say that from the initial regularity of $(u,v)$, one first recovers $W^{2,2}(\Om)$ regularity of $u$ by looking at the equation on $u$ (see \cite[Section 2.1]{G11}). This implies in turn $u\in L^{\ov{p}}(\Sigma)$ for some $\ov{p}>p$ thanks to trace estimates and Sobolev embeddings, thus ensuring $v\in W^{2,\ov{p}}(\Sigma)$ by looking at the equation on $v$, hence again $u\in W^{2,\ov{p}}(\Om)$. Iterating this procedure we obtain in a finite number of steps $u\in\C^{0,\eta}(\Sigma)$, whence $(u,v)\in\mathcal{B}$ by Schauder estimates.

Moreover, by compactness of the inclusion of $\C^{0,\eta}(\ov{\Om})\times\C^{0,\eta}(\Sigma)$ into $\C^{2,\eta}(\ov{\Om})\times\C^{2,\eta}(\Sigma)$ and Schauder estimates, $T$ is compact as seen as an operator from $E=\C^{0,\eta}(\ov{\Om})\times\C^{0,\eta}(\Sigma)$ to itself.

Let us now prove invertibility of \eqref{eq:partial_invert}.
Let $(x,y,\mu)\in \C^{0,\eta}(\ov{\Om})\times\C^{0,\eta}(\Sigma)\times\R$, and we thus search for $(\hat{u},\hat{v},\hat{\lambda})\in\mathcal{B}\times\R$ such that
\[
\begin{cases}
    -\Delta\hat{u}-\lambar_0\hat{u}=x+\hat{\lambda}u_0,&\text{in } \Om,\\
    \parnu \hat{u}+\hat{u}=\hat{v},&\text{over } \Sigma,\\
    -\Delta_\Sigma\hat{v}-(\lamti_0\hat{v}+\hat{u})=y+\hat{\lambda}v_0, &\text{over } \Sigma,\\
\end{cases}
\]
complemented with the condition $\int_\Om \hat{u}u_0+\int_\Sigma\hat{v}v_0=\mu/2.$
Since $T_{|E}$ is compact, from the Fredholm alternative we know that the above system has a solution exactly when $(x+\hat{\lambda}u_0,y+\hat{\lambda}v_0)$ is orthogonal to $(u_0,v_0)$, \textit{i.e.} if
\[
\langle (x+\hat{\lambda}u_0,y+\hat{\lambda}v_0),(u_0,v_0)\rangle=0.
\]
This rewrites $\hat{\lambda}=-\langle (x,y),(u_0,v_0)\rangle$ by using the normalization condition, thus determining $\hat{\lambda}$. Furthermore, in this case, as $\lambda_0$ is a simple eigenvalue, if $(\ov{u},\ov{v})$ denotes a particular solution of this system then all the solutions write $(\ov{u},\ov{v})+s(u_0,v_0)$ for $s\in \R$. But the last condition implies
\[
2s=\mu-2\left(\int_\Om \ov{u}u_0+\int_\Sigma\ov{v}v_0\right),
\]
thus determining completely $(\hat{u},\hat{v})$. We have therefore shown the invertibility of \eqref{eq:partial_invert}, hence the proposition.
\end{proof}
In the fashion of \cite[Lemma 5.3.3]{HP18}, by composition we immediately deduce from Proposition \ref{prop:diff_egf_circ} that the mapping 
\[
\theta\in\C^{2,\eta}(\R^d,\R^d)\mapsto (u_\th,v_\th,\lambda_\th)\in\C^{0,\eta}(\R^d)\times\C^{0,\eta}(\R^d)\times\R
\]
is of class $\C^2$ close to $\th=0$, where we have extended the eigenfunctions $u_\th:\Om\to\R$ and $v_\th:\Sigma\to\R$ as in \eqref{eq:def_ext_egf}, \textit{i.e.}
\begin{equation*}
\begin{cases}
    u_\th:=P^1_\Om(u_\th\circ\phi_\th)\circ\phi_\th^{-1}\in \C^{2,\eta}(\R^d),\\ v_\th:=P^2_\Om(v_\th\circ\phi_\th)\circ\phi_\th^{-1}\in \C^{2,\eta}(\R^d).
\end{cases}
\end{equation*}
Note that as the second derivative of this application involves the second derivatives of $u_\th(x)$ and $v_\th(x)$ in $x$, we have to consider $\C^{0,\eta}$ norms rather than $\C^{2,\eta}$ for the  space  this application takes values in  (analogously to \cite[Lemma 5.3.3]{HP18}).
\begin{proposition}\label{prop:diff_egf}
    Let $\Om$ be a  $\C^{2,\eta}$ \BBB bounded open set. Then the mapping
    \[
    \theta\in\C^{2,\eta}(\R^d,\R^d)\mapsto (u_\th,v_\th,\lambda_\th)\in\C^{0,\eta}(\R^d)\times\C^{0,\eta}(\R^d)\times\R
    \]
    is  $\C^2$ in a neighborhood of $\th=0$.
\end{proposition}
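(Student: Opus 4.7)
My plan is to transfer the $\C^\infty$ smoothness obtained in Proposition \ref{prop:diff_egf_circ} for the \emph{pulled-back} triple $(u_\theta \circ \phi_\theta,\, v_\theta \circ \phi_\theta,\, \lambda_\theta)$ to the $\C^2$ regularity of the \emph{extended} triple $(u_\theta,\, v_\theta,\, \lambda_\theta)$ on $\R^d$. By the construction of the extensions \eqref{eq:def_ext_egf}, the two are linked by
\[
u_\theta = P^1_\Omega(u_\theta \circ \phi_\theta) \circ \phi_\theta^{-1},\qquad v_\theta = P^2_\Omega(v_\theta \circ \phi_\theta) \circ \phi_\theta^{-1},
\]
so the proof will consist in controlling two separate operations: the action of the extension operators $P^1_\Omega, P^2_\Omega$, and the post-composition by the inverse diffeomorphism $\phi_\theta^{-1}$.

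The first step is essentially automatic: since $P^1_\Omega : \C^{2,\eta}(\ov{\Omega}) \to \C^{2,\eta}(\R^d)$ and $P^2_\Omega : \C^{2,\eta}(\Sigma) \to \C^{2,\eta}(\R^d)$ are continuous and linear, applying them to the conclusion of Proposition \ref{prop:diff_egf_circ} yields that
\[
\theta \longmapsto (U_\theta, V_\theta) := \bigl(P^1_\Omega(u_\theta \circ \phi_\theta),\; P^2_\Omega(v_\theta \circ \phi_\theta)\bigr)
\]
is $\C^\infty$ from $\C^{2,\eta}(\R^d, \R^d)$ into $\C^{2,\eta}(\R^d) \times \C^{2,\eta}(\R^d)$. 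The second step is the inversion: by the implicit function theorem in Banach spaces applied to the equation $\Psi + \theta \circ \Psi = \mathrm{Id}$, whose partial derivative with respect to $\Psi$ at $(\theta, \Psi) = (0, \mathrm{Id})$ is the identity, the map $\theta \mapsto \phi_\theta^{-1}$ is of class $\C^2$ in a neighbourhood of $0$ with values in $\C^{2,\eta}(\R^d, \R^d)$. The third step, which is the key ingredient, is the standard composition lemma in H\"older scales (cf. \cite[Lemma 5.3.3]{HP18}): the map
\[
\mathcal{K} : \C^{2,\eta}(\R^d) \times \C^{2,\eta}(\R^d, \R^d) \longrightarrow \C^{0,\eta}(\R^d),\qquad (U, \Psi) \longmapsto U \circ \Psi
\]
is of class $\C^2$. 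This is verified directly via the chain rule: every term appearing in its $k$-th differential (for $k \leq 2$) is of the form $(D^k U \circ \Psi)(\cdot, \ldots, \cdot)$ or a mixed variant with fewer $\R^d$-derivatives on $U$, and all such terms lie in $\C^{0,\eta}(\R^d)$ because $D^k U \in \C^{0,\eta}$ for $k \leq 2$ and $\Psi$ is Lipschitz.

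Chaining the three $\C^2$ pieces together, $\theta \mapsto u_\theta = \mathcal{K}(U_\theta, \phi_\theta^{-1})$ and $\theta \mapsto v_\theta = \mathcal{K}(V_\theta, \phi_\theta^{-1})$ are both $\C^2$ into $\C^{0,\eta}(\R^d)$, and together with the already-known smoothness of $\lambda_\theta$ this yields the claim. The main obstacle in the plan is precisely the composition lemma of the third step: it is the careful tracking of which factor supplies which H\"older regularity that forces the target space to drop from $\C^{2,\eta}$ down to $\C^{0,\eta}$, and hence explains why the proposition is stated in $\C^{0,\eta}$ rather than $\C^{2,\eta}$. The loss of two H\"older derivatives is exactly the price paid for the two $\theta$-differentiations.
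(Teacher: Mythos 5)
Your overall strategy is exactly the one the paper uses: Proposition \ref{prop:diff_egf_circ} gives $\C^\infty$ smoothness of the pulled-back eigenelements, and the loss from $\C^{2,\eta}$ down to $\C^{0,\eta}$ comes from post-composing by $\phi_\theta^{-1}$, in the manner of \cite[Lemma 5.3.3]{HP18}. The paper does not unpack this; you do, which is useful. Your first step (the extension operators are bounded linear, hence preserve all smoothness) and your final accounting of why the target drops two H\"older orders are both correct.

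However, the second step of your plan, as you state it, has a genuine gap. You claim that the implicit function theorem applied to $G(\theta,\Psi):=\Psi+\theta\circ\Psi-\mathrm{Id}$ shows that $\theta\mapsto\phi_\theta^{-1}$ is $\C^2$ with values in $\C^{2,\eta}(\R^d,\R^d)$. But $G$ is not even $\C^1$ from $\C^{2,\eta}\times\C^{2,\eta}$ to $\C^{2,\eta}$: its partial derivative in $\Psi$, namely $\delta\Psi\mapsto\delta\Psi+(D\theta\circ\Psi)\,\delta\Psi$, involves $D\theta\circ\Psi\in\C^{1,\eta}$, so the product lands only in $\C^{1,\eta}$. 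The IFT therefore cannot be invoked in the H\"older scale you chose. More fundamentally, the conclusion you want from it is false: differentiating $\Psi+\theta\circ\Psi=\mathrm{Id}$ gives
\[
\frac{d}{d\theta}\,\phi_\theta^{-1}\cdot\delta\theta=-\bigl(\mathrm{Id}+D\theta\circ\phi_\theta^{-1}\bigr)^{-1}\bigl(\delta\theta\circ\phi_\theta^{-1}\bigr),
\]
which lies in $\C^{1,\eta}$ but not in $\C^{2,\eta}$ when $\theta\ne0$. Hence $\theta\mapsto\phi_\theta^{-1}$ is at best $\C^1$ into $\C^{1,\eta}$ and $\C^2$ into $\C^{0,\eta}$, and you cannot then feed it into your composition map $\mathcal{K}:\C^{2,\eta}\times\C^{2,\eta}\to\C^{0,\eta}$ by the chain rule, because $\mathcal{K}$ is not $\C^2$ (nor even well-behaved) when the second argument lives only in $\C^{0,\eta}$.

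The repair is to avoid extracting a standalone regularity statement for $\theta\mapsto\phi_\theta^{-1}$ and instead differentiate the full map
\[
\mathcal{L}:\C^{2,\eta}(\R^d,\R^d)\times\C^{2,\eta}(\R^d)\longrightarrow\C^{0,\eta}(\R^d),\qquad (\theta,U)\longmapsto U\circ(\mathrm{Id}+\theta)^{-1},
\]
directly. Using the implicit formula above for the $\theta$-derivative of $\phi_\theta^{-1}$, one checks that each differential of $\mathcal{L}$ of order $\leq 2$ is a finite sum of products of factors in $\C^{0,\eta}$: the worst terms are $D^2U\circ\phi_\theta^{-1}$ (in $\C^{0,\eta}$ since $D^2U\in\C^{0,\eta}$ and $\phi_\theta^{-1}$ is Lipschitz) and $D^2\theta\circ\phi_\theta^{-1}$ (same reasoning), each multiplied by $\C^{1,\eta}$ or $\C^{2,\eta}$ factors and by the bounded operator $(\mathrm{Id}+D\theta\circ\phi_\theta^{-1})^{-1}$. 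This gives $\mathcal{L}$ of class $\C^2$, and then $\theta\mapsto u_\theta=\mathcal{L}\bigl(\theta,P^1_\Omega(u_\theta\circ\phi_\theta)\bigr)$ is $\C^2$ into $\C^{0,\eta}(\R^d)$ by composing with the $\C^\infty$ map $\theta\mapsto(\theta,U_\theta)$, and likewise for $v_\theta$. This is the content of \cite[Lemma 5.3.3]{HP18}, which does precisely this combined differentiation.
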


\end{document}